\theoremstyle{definition}
\newtheorem{definition}{Definition}[section]
\newtheorem{thm}{Theorem}[section]
\newtheorem{prop}{Proposition}[thm]
\newtheorem{lem}[thm]{Lemma}
\newtheorem{exam}{Example}[section]
\newtheorem*{rmk}{Remark}
\newcommand{\LLL}{\mathscr{L}}
\newcommand{\x}{\mathbf{x}}
\newcommand{\y}{\mathbf{y}}
\newcommand{\s}{\mathbf{s}}
\newcommand{\St}{\mathbf{St}}
\newcommand{\A}{\mathcal{A}}
\newcommand{\C}{\mathcal{C}}
\newcommand{\lf}{\varliminf\limits}
\newcommand{\I}{\textrm{(i)}}
\newcommand{\II}{\textrm{(ii)}}
\newcommand{\III}{\textrm{(iii)}}
\newcommand{\FP}[1][1]{u'_{#1}}
\newcommand{\FPW}[1][1]{v'_{#1}}
\newcommand{\FPG}[2][1]{u_{#1,#2}}
\newcommand{\FPGW}[2][1]{v_{#1,#2}}
\DeclareMathOperator{\rep}{rep}
\begin{document}

\title{The spectrum of The exponents of repetition}
\author{\footnotesize Deokwon Sim}

\maketitle

\begin{abstract}
    For an infinite word $\mathbf{x}$, Bugeaud and Kim introduced a new complexity function $\text{rep}(\mathbf{x})$ which is called the exponent of repetition of $\mathbf{x}$. 
    They showed $1\le \text{rep}(\mathbf{x}) \le \sqrt{10}-\frac{3}{2}$ for any Sturmian word $\mathbf{x}$. 
    Ohnaka and Watanabe found a gap in the set of the exponents of repetition of Sturmian words. 
    For an irrational number $\theta\in(0,1)$, let
    \[ \mathscr{L}(\theta):=\{\text{rep}(\mathbf{x}):\textrm{$\mathbf{x}$ is an Sturmian word of slope $\theta$}\}.
    \]
    In this article, we look into $\mathscr{L}(\theta)$. 
    The minimum of $\mathscr{L}(\theta)$ is determined where $\theta$ has bounded partial quotients in its continued fraction expression. 
    In particular, we find out the maximum and the minimum of $\mathscr{L}(\varphi)$ where $\varphi:=\frac{\sqrt{5}-1}{2}$ is the fraction part of the golden ratio. 
    Furthermore, we show that the three largest values are isolated points in $\mathscr{L}(\varphi)$ and the fourth largest point is a limit point of $\mathscr{L}(\varphi)$.
\end{abstract}

\section{Introduction}
A word $\x$ over $\A$ is a finite or infinite sequence of elements of a finite set $\A$. 
An element of $\A$ is called a \emph{letter}.
For each integer $n\ge1$, $p(n,\x)$ is defined by the number of distinct subwords of length $n$ appearing in the word $\x$ and is called by the \emph{subword complexity} of $\x$.
Morse and Hedlund showed that an infinite word is eventually periodic if and only if its subword complexity is bounded \cite{MH40}.
Thus, a non-eventually periodic word $\x$ with the smallest subword complexity satisfies $p(n,\x)=n+1$ for all $n\ge 1$.
\begin{definition}
A \emph{Sturmian word} is an infinite word $\x$ over $\A=\{0,1\}$ satisfying $p(n,\x)=n+1$ for all $n\ge 1$. 
\end{definition}

Let us recall a classical result on Sturmian words \cite{L02}*{Chapter 2}.
For a Sturmian word $\x=x_{1}x_{2}\dots$, there exist an irrational number $\theta\in(0,1)$ and a real number $\rho$ such that either \[x_{n}=\lfloor{(n+1)\theta+\rho}\rfloor-\lfloor{n\theta+\rho}\rfloor\textrm{ for all }n\ge1\] or
\[x_{n}=\lceil{(n+1)\theta+\rho}\rceil-\lceil{n\theta+\rho}\rceil\textrm{ for all }n\ge1.\] 
Conversely, for an irrational number $\theta\in(0,1)$ and a real number $\rho$, the infinite words defined by
\[\s_{\theta, \rho}:=s_{1}s_{2}\dots,\text{ }\s'_{\theta, \rho}:=s'_{1}s'_{2}\dots
\]
are Sturmian where $s_{n}=\lfloor{(n+1)\theta+\rho}\rfloor-\lfloor{n\theta+\rho}\rfloor$ and $s'_{n}=\lceil{(n+1)\theta+\rho}\rceil-\lceil{n\theta+\rho}\rceil$ for $n\ge1$.
They are called the Sturmian words \emph{of slope $\theta$ and intercept $\rho$}.

Sturmian words have been studied in many different areas \cites{AB98, BKLN, K96, L02}.
Various complexities have been looked into characterize Sturmian words such as Cassaigne's recurrence function, rectangle complexity \cites{C97, BV00}. 
Bugeaud and Kim recently suggested a new complexity function $r(n,\x)$ and characterized Sturmian words and eventually periodic words in terms of $r(n,\x)$ \cite{BK19}.

\begin{definition}
Given an infinite word $\x=x_{1}x_{2}\dots$,
let $r(n,\x)$ denote the length of the smallest prefix in which some subword of length $n$ occurs twice. More precisely,
\[r(n,\x):=\min\{m:x_{j}x_{j+1}\dots x_{j+n-1}=x_{m-n+1}x_{m-n+2}\dots x_{m}\textrm{ for some $1\le j \le m-n$}\}.
\]
The \emph{exponent of repetition} of $\x$ is defined by 
\begin{align*}
\rep({\x}) := \liminf_{n\to\infty}{\frac{r(n,\x)}{n}}.
\end{align*}
\end{definition}
By definition, $r(n+1,\x)\ge r(n,\x)+1$ for $n\ge1$ and $\rep(c\x)=\rep(\x)$ for any finite word $c$.
Theorem 2.3 and 2.4 in \cite{BK19} says that
for an infinite word $\x$, the following statements hold:
\begin{enumerate}[label=(\roman*)]
    \item $\x$ is eventually periodic if and only if $r(n,\x)\le 2n$ for all sufficiently large integers $n$.
    \item $\x$ is a Sturmian word if and only if $r(n,\x)\le 2n+1$ for all $n\ge 1$ and equality holds for infinitely many $n$.
\end{enumerate}

A number of spectrums such as Lagrange spectrum and Markoff spectrum have been studied for a long time. 
Spectrums connected with Diophantine approximation are one of main topics in Number theory, Combinatorics, Dynamics \cites{CF89, F12, AK21}. 
In particular, the set of the irrationality exponents appears in many references \cites{B12, AR09}. 
Theorem 4.5 in \cite{BK19} says that for a Sturmian word $\x=x_1 x_2 \dots$, an integer $b\ge2$, and a Sturmian number $r_{\x}:=\sum_{k\ge1}\frac{x_k}{b^k}$, the exponent of repetition $\rep(\x)$ and the irrationality exponent of $r_{\x}$, denoted by $\sup\{\mu:|r_{\x}-\frac{p}{q}|<\frac{1}{q^{\mu}}\text{ for infinitely many $p,q$}\}$, have a special relation:
$\mu (r_{\x})=\frac{\rep(\x)}{\rep(\x)-1}$ where $\mu (r_{\x})$ is the irrationality exponent of $r_{\x}$.
Thus, we look into the spectrum of the exponents of repetition of Sturmian words.
From now on, we call $r_{\x}$ a \emph{Sturmian number of slope $\theta$} if $\x$ is a Sturmian word of slope $\theta$.

Let us set up some notations on continued fraction expansion. Any irrational number $\theta$ has the unique continued fraction expansion of the form 
\[[a_{0};a_{1},a_{2},\dots]:=a_{0}+\dfrac{1}{a_{1}+\dfrac{1}{a_{2}+\cdots}}
\]
where $a_{0}\in\mathbb{Z}$ and $a_{n}\in\mathbb{Z}_{>0}$  for $n\ge1$.
Each $a_{n}$ is called a \emph{partial quotient} of $\theta$.
We say that $\theta$ has \emph{bounded partial quotients} if all of partial quotients of $\theta$ are bounded.
Let $k\ge0$.
We call $[a_{0};a_{1}, a_{2}, \dots, a_{k}]$ the \emph{$k$th principal convergent} of $\theta$.
Denote $\frac{p_{k}}{q_{k}}=[a_{0};a_{1}, a_{2}, \dots, a_{k}]$.
Let $p_{-1}=1$, $q_{-1}=0$.
It is known that $p_{k+1}=a_{k+1}p_{k}+p_{k-1}$ and $q_{k+1}=a_{k+1}q_{k}+q_{k-1}$ for all $k\ge0$. 

Let us denote by $\St$ the set of Sturmian words over $\A=\{0,1\}$ and by $\rep(\St)$ the set of $\rep(\x)$ for all $\x \in \St$. 
In \cite{BK19}, Bugeaud and Kim found the maximum of $\rep(\St)$ and a necessary and sufficient condition for the maximum of $\rep(\St)$.
They also gave an alternative proof of \cite{AB11}*{Proposition 11.1}, which is a sufficient condition for the minimum of $\rep(\St)$.

\begin{thm}(\cite{BK19}, Theorem 3.3 and 3.4)
For any Sturmian word $\x$,
$$
1 \le \rep(\x) \le \sqrt{10}-\frac{3}{2}
$$
Moreover, a Sturmian word $\x$ satisfies $\rep(\x)=r_{\textrm{max}}:=\sqrt{10}-\frac{3}{2}$ if and only if its slope is of the form $[0;a_{1}, a_{2}, \dots, a_{K}, \overline{2,1,1}]$ for some $K$. 
A Sturmian word $\x$ satisfies $\rep(\x)=1$ if its slope has unbounded partial quotients.
\end{thm}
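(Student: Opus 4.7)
The lower bound $\rep(\x)\ge 1$ is immediate: two distinct occurrences of a length-$n$ subword inside a common prefix of length $m$ must start at distinct positions, so $m\ge n+1$, whence $r(n,\x)\ge n+1$ and $\rep(\x)=\liminf_n r(n,\x)/n\ge 1$.

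For the upper bound and the equality characterization, the plan is to exploit the invariance $\rep(c\x)=\rep(\x)$ so that computations can be made on a Sturmian word presented with a long standard word at each scale. Any Sturmian word of slope $\theta=[0;a_1,a_2,\dots]$ contains every standard word $s_k$ (defined by $s_{-1}=1$, $s_0=0$, $s_{k+1}=s_k^{a_{k+1}}s_{k-1}$, $|s_k|=q_k$) as a subword, and by uniform recurrence its first occurrence lies at a position of order $O(q_k)$; hence the relevant values of $r(n,\x)$ are governed by slope data alone. The combinatorial core is the near-commutation identity---$s_k s_{k-1}$ and $s_{k-1}s_k$ agree except in their last two letters---together with the fact that $s_{k+1}$ begins with $s_k^{a_{k+1}}$. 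These yield, at each scale $k$ and each $1\le j\le a_{k+1}$, upper bounds of the shape $r(n,\x)\le n+q_k$ on explicit intervals of $n$ lying between $q_{k-1}$ and $q_{k+1}$. Minimizing $(n+q_k)/n$ at the right endpoint of each interval rewrites $\liminf_n r(n,\x)/n$ as a function of the continued-fraction digits $(a_k)$.

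The remaining task is to maximize this liminf over positive-integer sequences $(a_k)$. A preliminary argument---essentially the proof of the unbounded-partial-quotients case---shows that any $a_k\ge 3$ forces the liminf strictly below the conjectured maximum, so the extremizing sequence must lie in $\{1,2\}^{\mathbb{N}}$. A finite case analysis over periodic candidates singles out $\overline{2,1,1}$ as the unique maximizer; substitution yields a quadratic fixed-point equation whose positive root is $\sqrt{10}-\tfrac{3}{2}$, with $\sqrt{10}$ arising from the dominant eigenvalue $3+\sqrt{10}$ of the period-three matrix product $M_2 M_1 M_1$ that governs the convergent denominators. The converse in the equality statement follows by direct substitution. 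For the unbounded case, along a subsequence with $a_{k+1}\to\infty$, the factor $s_k^{a_{k+1}}$ occurs in $\x$ at some position $P_k=O(q_k)$ by uniform recurrence, so taking $n=(a_{k+1}-1)q_k$ one has $r(n,\x)\le P_k+a_{k+1}q_k$, giving $r(n,\x)/n\to 1$; combined with the lower bound, this forces $\rep(\x)=1$.

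The main obstacle is the sharpness of the upper bounds $r(n,\x)\le n+q_k$ needed for the equality characterization: one must verify these bounds are attained on slopes of the extremal form and that no other slope achieves the extremal value. The uniqueness step must quantitatively exclude nearby candidates such as $\overline{1}$ (the Fibonacci slope $\varphi$), $\overline{2}$, $\overline{1,2}$, $\overline{2,2,1,1}$, and longer quasi-periodic deviations. Both tasks rest on a careful analysis of return words and palindromic factors within Sturmian words, which constitutes the technical heart of the argument.
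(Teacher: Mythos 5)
This theorem is quoted from \cite{BK19} (Theorems 3.3 and 3.4) and is not proved anywhere in the present paper, so there is no in-paper argument to compare yours against; judged on its own, your outline follows the right general strategy (prefix-periodicity at each scale $q_k$, then optimization over the partial quotients), but it has concrete gaps beyond the correct lower bound $r(n,\x)\ge n+1$.

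The most serious one is the unbounded-partial-quotient case. You take $n=(a_{k+1}-1)q_k$ and assert that the first occurrence of $s_k^{a_{k+1}}$ is at a position $P_k=O(q_k)$ ``by uniform recurrence.'' That is precisely what needs proof, and uniform recurrence does not give it: $s_k^{a_{k+1}}$ has length $\approx q_{k+1}$, and the recurrence function of a Sturmian word at that scale only guarantees a first occurrence by position $\approx q_{k+1}+q_k$, i.e.\ $O(a_{k+1}q_k)$. With $P_k\asymp a_{k+1}q_k$ your estimate gives $r(n,\x)/n\le 1+(P_k+q_k)/n\to 2$, not $1$. To get a constant independent of $a_{k+1}$ one has to use the intercept decomposition $\x=W_k\cdots$ with $|W_k|\le q_k$ (the cases (i)--(iii) of Section \ref{Def_and_Exam} and the explicit positions of Lemma \ref{PositionToComputeGeneralcase}), which locates a period-$q_k$ window of the required length starting within $O(q_k)$ letters uniformly over intercepts.

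The equality characterization is also not closed. Discarding $a_k\ge 3$ and then running ``a finite case analysis over periodic candidates'' does not handle arbitrary (non-eventually-periodic) digit strings in $\{1,2\}^{\mathbb N}$, nor arbitrary finite prefixes $a_1,\dots,a_K$; one needs quantitative estimates showing every admissible local digit pattern other than the $\overline{2,1,1}$-pattern depresses the liminf by a definite amount. And ``the converse follows by direct substitution'' cannot work as stated, because $\rep(\x)$ depends on the intercept and not only on the slope: by Theorem \ref{TheMinOfSpec} of this very paper, a slope with tail $\overline{2,1,1}$ satisfies $\min\LLL(\theta)=\liminf_k[1;1+a_k,\dots]<\sqrt{10}-\tfrac32$, so not every word of that slope attains the maximum. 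Establishing attainment requires identifying, via the locating-chain analysis, which intercepts make the upper bounds $r(n,\x)\le n+q_k$ sharp along the relevant subsequence. These two items are where the actual content of the cited theorem lies, and the proposal does not supply them.
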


On the other hand, Ohnaka and Watanabe recently discovered the second largest value $r_{1}$ of $\rep(\St)$ and proved that $r_{1}$ is a limit point of $\rep(\St)$.
\begin{thm}(\cite{OW}, Theorem 0.4)
For $r_{1}=\frac{48+\sqrt{10}}{31}=1.650\dots$,
$$(r_{1},r_{\textrm{max}})\cap \rep(\St)=\emptyset$$
Moreover, $r_{1}$ is a limit point of $\rep(\St)$.
\end{thm}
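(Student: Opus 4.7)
The plan is to exploit a quantitative formula for $\rep(\x)$ in terms of the continued fraction data of the slope $\theta$, building on the characterization of the $r_{\max}$-achieving slopes recalled just above. The gap then follows from a cost analysis of single deviations from the tail $\overline{2,1,1}$, and the limit point follows by pushing such a deviation arbitrarily far down the continued fraction.

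First, I would derive an expression of the shape $\rep(\x) = 1 + \liminf_{k} F_k$, where $F_k$ is an explicit function of finitely many partial quotients of $\theta$ near position $k$ (together with, if needed, bounded information about the intercept). This is the key technical step. The idea is that for each convergent denominator $q_k$, one can pinpoint the first occurrence of a repeated subword of length comparable to $q_k$ and compute $r(n,\x)/n$ from the Ostrowski data of $\theta$ and $\rho$. The tail $\overline{2,1,1}$ arises because it is the configuration that maximizes $F_k$ at every $k$ simultaneously.

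Second, I would quantify how far $F_k$ drops when the local window of partial quotients deviates from a cyclic shift of $(2,1,1)$. A case analysis over the finitely many small windows (using that partial quotients are positive integers and that larger partial quotients only decrease $F_k$ further) should show that any such deviation contributes at most $r_1 - 1$ to the $\liminf$, with equality attained only in a clean limiting regime. Taking $\liminf$ then forces every Sturmian word with $\rep(\x) > r_1$ to have its tail equal to $\overline{2,1,1}$, hence $\rep(\x) = r_{\max}$, which is the gap.

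Third, for the limit point statement, I would construct a sequence of slopes $\theta_N = [0; a_1, \dots, a_K, 2, 1, 1, 2, 1, 1, \dots, 2, 1, 1, b_1, b_2, \dots]$, where the block $2, 1, 1$ is repeated $N$ times and $(b_j)$ is chosen to realize the second-best window identified above. As $N$ grows, the windows at positions before the $b_j$'s all contribute $r_{\max}-1$, only the worst window contributes roughly $r_1 - 1$, and the $\liminf$ converges to $r_1$ from below. This produces Sturmian words with $\rep$ tending to but never equal to $r_1$, so $r_1$ is a limit point of $\rep(\St)$.

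The main obstacle is the cost analysis in the second step. Computing the second-best value as exactly $r_1 = (48+\sqrt{10})/31$ requires solving the small quadratic that arises when one inserts a single perturbed partial quotient into the otherwise periodic $\overline{2,1,1}$ tail, and verifying that more complicated perturbations (changing several nearby partial quotients, or choosing a different cyclic shift of the block) cannot land $\rep$ in the interval $(r_1, r_{\max})$. Once this clean gap estimate is in hand, the rest of the proof is essentially $\liminf$ bookkeeping.
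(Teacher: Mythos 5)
This statement is quoted from Ohnaka--Watanabe (\cite{OW}, Theorem 0.4); the paper does not prove it, so there is no internal proof to compare against --- the closest analogues in spirit are Theorems \ref{Proof_TheRightGap} and \ref{Proof_TheThirdRightGap}, which carry out exactly this kind of gap-and-limit-point analysis for the fixed slope $\varphi$. Measured against that machinery, your proposal has two genuine gaps. First, the reduction in your opening step is too optimistic: $\rep(\x)$ is \emph{not} $1+\liminf_k F_k$ for an $F_k$ depending on finitely many partial quotients near $k$ plus ``bounded information about the intercept.'' By Lemma \ref{PositionToComputeGeneralcase}, the consecutive ratios $n_i/n_{i+1}$ involve the quantities $t_k=|W_k|/q_k\in[0,1]$, and $|W_k|$ accumulates contributions from the entire locating-chain history of the intercept (e.g.\ $W_{k+1}=W_kM_{k-1}$ at every level in case (ii)). The spectrum $\LLL(\theta)$ is a genuinely rich set for fixed $\theta$ precisely because of this, so the gap statement is a joint extremal problem over the slope \emph{and} the full intercept data; a case analysis over ``finitely many small windows'' of partial quotients cannot close without simultaneously controlling $t_k$, which is the real content (compare the seven-case estimate in the proof of Theorem \ref{Proof_TheMinOfSpec}). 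Your second step, where the exact value $r_1=(48+\sqrt{10})/31$ and the claim that the second-best configuration is a single perturbed partial quotient would have to be established, is asserted rather than carried out.

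Second, the limit-point construction is flawed as written. Since $\rep$ is a $\liminf$, it is insensitive to any finite prefix of the continued fraction expansion: for your $\theta_N=[0;a_1,\dots,a_K,(2,1,1)^N,b_1,b_2,\dots]$ with a fixed tail $(b_j)$, the value $\rep(\x)$ does not depend on $N$ at all, so nothing converges to $r_1$ as $N\to\infty$. The perturbation must recur infinitely often with growing blocks of $(2,1,1)$ in between --- e.g.\ slopes with tail $\overline{(2,1,1)^N w}$ for a fixed perturbing block $w$, letting $N\to\infty$, together with a choice of intercept (locating chain) realizing the extremal ratios; this is the analogue of the words $\dots\overline{(b^2a^2)^dba}$ used in Theorem \ref{Proof_TheThirdRightGap}. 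You should also separate cleanly the two logical halves: showing $\rep(\x)\le r_1$ whenever the slope's tail is not $\overline{2,1,1}$ (the gap), versus exhibiting values accumulating at $r_1$ (the limit point); your phrasing ``equality attained only in a clean limiting regime'' conflates them.
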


For any irrational number $\theta\in(0,1)$, let $\LLL(\theta)$ be the set of the exponents of repetition of Sturmian words of slope $\theta$, i.e.
\[
\LLL(\theta) = \{ \mathrm{rep}({\x}) \, : \, \x \text{ is a Sturmian word of slope $\theta$}\}.
\]
In this article, we mainly study $\LLL(\theta)$.
Theorem 3.3 in \cite{BK19} gives $\rep(\x)=1$ where the slope of $\x$ has unbounded partial quotients. 
We find the minimum of $\LLL(\theta)$ where $\theta$ has bounded partial quotients.

\begin{thm}\label{TheMinOfSpec}
Let $\theta=[0;a_{1},a_{2},\dots]$ have bounded partial quotients.
We have
$$
\min\LLL(\theta) = \lf_{k\to\infty} \, [1; 1+ a_k, a_{k-1}, a_{k-2}, \dots, a_1].
$$
\end{thm}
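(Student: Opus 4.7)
The plan is as follows. First, I rewrite the right-hand side in terms of convergent denominators: using the classical identity $[a_k;a_{k-1},\dots,a_1]=q_k/q_{k-1}$ for the reversed continued fraction of $\theta=[0;a_1,a_2,\dots]$, a direct computation yields
\[
[1;1+a_k,a_{k-1},\dots,a_1]\;=\;1+\dfrac{q_{k-1}}{q_k+q_{k-1}}\;=\;\dfrac{q_k+2q_{k-1}}{q_k+q_{k-1}}.
\]
Hence the target liminf equals $\lf_{k\to\infty}(n_k+q_{k-1})/n_k$ with $n_k:=q_k+q_{k-1}$, suggesting that the extremal behaviour arises from length-$n_k$ factors repeating with an overall overhead of order $q_{k-1}$.

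For the upper bound, I would construct a Sturmian word $\x^{*}$ of slope $\theta$ such that, along a subsequence $k_j\to\infty$, $r(n_{k_j},\x^{*})/n_{k_j}$ tends to the prescribed liminf. The candidate is a carefully chosen intercept (not, in general, the characteristic word, whose exponent of repetition is typically strictly larger than $\min\LLL(\theta)$). Writing $m_k$ for the standard word of slope $\theta$ of length $q_k$ with the recurrence $m_{k+1}=m_k^{a_{k+1}}m_{k-1}$, and exploiting the classical near-commutation whereby $m_k m_{k-1}$ and $m_{k-1}m_k$ agree on their first $q_k+q_{k-1}-2$ letters, one extracts a length-$n_k$ factor occurring twice in an appropriate prefix of $\x^{*}$ shifted by a value of order $q_{k-1}$. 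Passing to the limit yields $\rep(\x^{*})\le\lf_{k\to\infty}[1;1+a_k,a_{k-1},\dots,a_1]$.

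For the lower bound, I would show that every Sturmian word $\x$ of slope $\theta$ satisfies $\rep(\x)\ge\lf_{k\to\infty}[1;1+a_k,\dots,a_1]$. Given a repetition $\x[j..j+n-1]=\x[j+s..j+s+n-1]$ realising $r(n,\x)=j+n+s-1$, the factor $\x[j..j+n+s-1]$ has length $n+s$ and period $s$. The key structural input is that, for a Sturmian word of slope $\theta$, the admissible short periods are controlled by the convergent denominators: such a factor can occur only when $s$ is (essentially) a convergent denominator $q_\ell$, and then its length is bounded above by roughly $q_\ell+q_{\ell+1}$. Matching these constraints against the relevant range of $n$, and optimising over $s$, $\ell$, and the starting position $j$, yields a matching lower bound on $r(n,\x)$ along the subsequence realising the liminf.

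The main obstacle is the lower bound: it must hold uniformly over all intercepts, and requires a systematic case analysis on the shift $s$ together with a Fine--Wilf-type periodicity lemma specialised to Sturmian words, so as to rule out any Sturmian word of slope $\theta$ whose repetitions occur faster than prescribed. The assumption that $\theta$ has bounded partial quotients enters here crucially, guaranteeing that $q_k/q_{k-1}$ stays uniformly bounded and hence that the liminf over $k$ is actually attained in $\LLL(\theta)$, rather than merely approached through degenerate subsequences generated by arbitrarily large partial quotients.
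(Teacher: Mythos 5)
Your reformulation of the right-hand side as $1+\tfrac{q_{k-1}}{q_k+q_{k-1}}$ is correct, but the lower bound --- the heart of the theorem --- rests on a structural claim that is false as stated, and the error sits exactly where the constant is decided. You claim that an overlapping repetition forces the period $s$ to be essentially a convergent denominator $q_\ell$ and that a period-$q_\ell$ factor has length at most roughly $q_\ell+q_{\ell+1}$. In fact the language of slope $\theta$ contains factors of period $q_\ell$ of length $q_{\ell+1}+2q_\ell-2$ (for instance $M_\ell^{a_{\ell+1}+2}M_{\ell-1}^{--}$; this is exactly what appears in the paper's Lemma 3.2, case (4), where after the prefix $W_k$ one reads $M_{k+1}^{a_{k+2}+2}M_k^{--}$). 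If your bound $n+s\le q_\ell+q_{\ell+1}$ (roughly) were true, your argument would give $\rep(\x)\ge 1+\liminf_k q_{k-1}/q_k$, which is strictly larger than the theorem's value $1+\liminf_k q_{k-1}/(q_k+q_{k-1})$ and contradicts your own upper-bound construction; with the correct maximal length, the extremal configuration is precisely a near-maximal period-$q_{k-1}$ block sitting at the start of the word, giving $r(n,\x)\approx n+q_{k-1}$ at $n\approx q_k+q_{k-1}$. Moreover the admissible periods are not essentially the $q_\ell$: squares of period $tq_\ell+q_{\ell-1}$ with $1\le t\le a_{\ell+1}$ occur, and the shifts realising $r(n,\x)$ also involve the offset $|W_k|$ of the word relative to the standard decomposition. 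Showing that none of these repetitions drags the liminf below the target is exactly the content of the paper's Lemma 3.2 (which computes $\Lambda(\x)=\{n:r(n,\x)=2n+1\}$ in terms of the $q_k$ and $|W_k|$) together with the seven-case estimate in the proof of Theorem 3.2; your proposal defers all of this to an unspecified ``systematic case analysis with a Fine--Wilf-type lemma,'' so the lower bound is not actually proved.

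The upper bound is also only half-specified. The difficulty is not to find one long repetition but to realise, inside a single Sturmian word of slope $\theta$, near-maximal period-$q_{k_j-1}$ repetitions for infinitely many levels $k_j$ along a subsequence with $q_{k_j-1}/q_{k_j}$ tending to the liminf, while keeping the initial offset negligible compared to $q_{k_j}$ (otherwise the ratio degrades to $1+\tfrac{|W|+q_{k-1}}{q_k+q_{k-1}}$). The paper achieves this with the locating-chain formalism: choose $k_j$ realising the liminf with $k_{j+1}-k_j\to\infty$ and odd, put the word in case (ii) at the levels $k_j$ and in the alternating pattern (i)(iii) in between, so that $|W_{k_j}|/q_{k_j}\to 0$. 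Finally, your closing remark misattributes the role of bounded partial quotients: it is used in the lower-bound case analysis (it keeps $\liminf_k q_{k-1}/q_k$ positive and permits the comparisons discarding the ratios of size at least $1/2$), not to ensure the liminf is ``attained rather than approached.''
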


In particular, for $\varphi:=[0;\overline{1}]$, we find the minimum $\mu_{\min}$ and the maximum $\mu_{\max}$ of $\LLL(\varphi)$. 
In addition, we determine Sturmian words $\x$ satisfying $\rep(\x)=\mu_{\min}$ or $\mu_{\max}$ in terms of the locating chain of $\x$, to be defined in Section \ref{Def_and_Exam} and \ref{GoldenRatio}.

\begin{thm}\label{TheMaxMinForGoldenRatio}
Let $\x$ be a Sturmian word of slope $\varphi$. 
Then, $\mu_{min} \le \rep(\x) \le\mu_{max}.$
Moreover, the locating chain of $\x$ is $u \overline{a}$ or $v\overline{b}$ for some finite words $u,v$ if and only if $\rep(\x) = \mu_{max}.$
We have $\x\in S_d$ for any $d\ge1$ if and only if $\rep(\x) = \mu_{min}.$
\end{thm}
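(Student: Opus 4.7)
The plan is to combine Theorem~\ref{TheMinOfSpec} with a direct analysis of $r(n,\x)/n$ along Fibonacci denominators, reading off both extremes from the locating-chain combinatorics developed in the later sections. Since every partial quotient of $\varphi$ equals $1$, the locating chain of any Sturmian word of slope $\varphi$ is a one-sided sequence over the two-letter alphabet $\{a,b\}$, which makes the extremal dichotomy transparent.

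For the lower bound I would apply Theorem~\ref{TheMinOfSpec} with $\theta=\varphi$: every entry $[1;1+a_k,a_{k-1},\dots,a_1]$ reduces to the constant $[1;2,\overline{1}]$, so
\[
\mu_{\min}=\min\LLL(\varphi)=[1;2,\overline{1}]=\frac{5-\sqrt{5}}{2}.
\]
To pin down \emph{which} Sturmian words realise this minimum, I would unpack the proof of Theorem~\ref{TheMinOfSpec}: it produces, for each $k$, a formula for $r(q_k,\x)/q_k$ depending only on the terminal block of the locating chain of $\x$ ending at depth $k$. The sets $S_d$ are designed so that $\x\in S_d$ is equivalent to the chain displaying, beyond depth $d$, the combinatorial pattern that drives $r(q_k,\x)/q_k$ down to $\mu_{\min}$ along a subsequence. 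Consequently, $\x\in\bigcap_{d\ge1}S_d$ yields $\rep(\x)=\mu_{\min}$; whereas $\x\notin S_d$ for some $d$ forces a definite gap $r(q_k,\x)/q_k\ge\mu_{\min}+\eta_d$ for all large $k$, ruling out $\rep(\x)=\mu_{\min}$.

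For the upper bound I would first compute $\rep(\x)$ directly when the locating chain of $\x$ is $u\overline{a}$ or $v\overline{b}$. In each case the Fibonacci recurrence $q_{k+1}=q_k+q_{k-1}$ together with the standard Sturmian substitution $0\mapsto 01,\ 1\mapsto 0$ let one evaluate $r(q_k,\x)$ exactly from some $k_0$ on, and passing to the limit yields the common value that defines $\mu_{\max}$ (the two eventually-constant chains agreeing by the $0\leftrightarrow 1$ symmetry available at slope $\varphi$); this is the ``if'' direction. For the converse, suppose the chain contains both $a$ and $b$ infinitely often. The key quantitative claim is that there exists $\varepsilon=\varepsilon(\varphi)>0$ such that at every index $k$ where the chain switches letter one has $r(q_k,\x)\le(\mu_{\max}-\varepsilon)\,q_k$; taking $\varliminf$ over such $k$ then gives $\rep(\x)\le\mu_{\max}-\varepsilon<\mu_{\max}$.

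The substantive step is this uniform $\varepsilon$-drop at oscillation indices; everything else is mechanical once the locating chain is in hand. My approach would be to enumerate the finitely many local patterns (essentially $ab$, $ba$, $aba$, $bab$) that can occur around a switch in the chain, and to exhibit in each case, via the return-word structure of the Fibonacci Sturmian word, an explicit early repetition of a length-$\approx q_k$ factor. Since only finitely many local patterns are possible and each strictly beats $\mu_{\max}$, a uniform $\varepsilon$ follows and both halves of the ``iff'' are complete.
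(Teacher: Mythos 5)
Your overall strategy — compute $\rep(\x)$ exactly for the eventually constant chains $u\overline a$, $v\overline b$, show a quantified drop below $\mu_{max}$ whenever the chain oscillates, get the lower bound from the general slope result, and read off the minimizers from when equality can be approached — is the same as the paper's (the paper re-derives the lower bound via Lemma \ref{ComputationOnRep} rather than citing Theorem \ref{TheMinOfSpec}, but that is cosmetic). However, your self-identified ``substantive step'' is misstated in a way that matters. It is not true that \emph{every} switch index yields a uniform $\varepsilon$-drop: at an $a\to b$ switch, Lemma \ref{PositionToCompute}(2) gives consecutive elements $n_i=q_{k+1}+|W_k|-1$ and $n_{i+1}=q_{k+3}-1$ of $\Lambda(\x)$, and since $|W_k|$ can be as large as $q_k$ (which happens precisely when the switch is preceded by a long run of $b$'s), the ratio $n_i/n_{i+1}$ can be arbitrarily close to $(q_{k+2}-1)/(q_{k+3}-1)\to\varphi$, i.e.\ there is no uniform $\varepsilon$ for the pattern $ab$. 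The uniform drop comes only from the $b\to a$ transitions: there $n_i=q_{k+1}-1$ and $n_{i+1}=q_{k+2}+|W_{k+1}|-1$ with $W_{k+1}=W_kM_{k-1}$, hence $|W_{k+1}|\ge q_{k-1}$ and $n_i/n_{i+1}\le\varphi/(1+\varphi^3)<\varphi$ uniformly. Your argument is repaired by observing that if both letters occur infinitely often then $ba$ occurs infinitely often, which is exactly what the paper uses; but as written, ``each [local pattern] strictly beats $\mu_{max}$'' is false for $ab$.

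Two smaller points. First, the claim that the $\overline a$ and $\overline b$ cases agree ``by the $0\leftrightarrow1$ symmetry available at slope $\varphi$'' is not a valid justification: exchanging $0$ and $1$ sends slope $\varphi$ to slope $1-\varphi$, and the two eventually constant chains must simply be computed separately (they give $\liminf n_i/n_{i+1}=\varphi$ for different structural reasons, as in the paper's two computations). Second, the characterization of the minimizers is left at the level of intent; the concrete mechanism you need is that equality $\rep(\x)=1+\varphi^2$ forces $\liminf|W_{k(i)-1}|/q_{k(i)}=0$ or $\limsup|W_{k'(i)+1}|/q_{k'(i)+1}=1$ along the relevant subsequences, and since $|W|$ grows by $q_{k-1}$ exactly at the $b$-steps and is frozen at the $a$-steps, these two alternatives are equivalent to arbitrarily long $a$-chains, respectively $b$-chains, i.e.\ to $\x\in\bigcap_{d\ge1}S_d$. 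With the $ba$-repair and this last step made explicit, your plan does yield the theorem.
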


We call an interval $(c,d)$ a \emph{maximal gap} in $\LLL(\varphi)$ if $(c,d)\cap \LLL(\varphi)=\emptyset$ and there exist Sturmian words $\x$, $\y$ such that $\rep(\x)=c$, $\rep(\y)=d.$ 
We show that the three largest points $\mu_{max}, \mu_{2}, \mu_{3} $ are isolated points in $\LLL(\varphi)$ and the fourth largest point $\mu_{4}$ is an accumulation point of $\LLL(\varphi)$. 
Moreover, we determine Sturmian words $\x$ satisfying $\rep(\x)=\mu_{2}, \mu_{3},\text{ or }\mu_{4}$ in terms of the locating chain of $\x$

\begin{thm}\label{TheRightGap}
The intervals
$
\left( \mu_{2} , \mu_{max} \right),
\left(\mu_{3}, \mu_{2}\right)$
are maximal gaps in $\LLL(\varphi)$. Moreover, the locating chain of $\x$ is $u\overline{ab}$ for some finite word $u$ if and only if $\rep(\x) = \mu_{2}$. 
The locating chain of $\x$ is $v\overline{b^{2}a^{2}}$ for some finite word $v$ if and only if $\rep(\x)=\mu_{3}$.
\end{thm}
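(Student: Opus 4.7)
My plan is to build on the locating-chain machinery developed in Sections \ref{Def_and_Exam} and \ref{GoldenRatio}, which (as already exploited in Theorem \ref{TheMaxMinForGoldenRatio}) encodes every Sturmian word of slope $\varphi$ by a one-sided infinite word over $\{a,b\}$ and lets one read off $\rep(\x)$ from the asymptotic position and shape of certain patterns in that chain. The maximum case was governed by chains eventually equal to $\overline{a}$ or $\overline{b}$; the present theorem asks what happens when this fails. I would begin by writing down, in closed form, the value $\rep(\x)$ attached to a locating chain that is eventually periodic with period $ab$, respectively $b^{2}a^{2}$, obtaining explicit algebraic expressions for $\mu_{2}$ and $\mu_{3}$. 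These should emerge from the same kind of quadratic/continued-fraction computation used to produce $\mu_{max}$ in Theorem \ref{TheMaxMinForGoldenRatio}, since for slope $\varphi$ the relevant limits are controlled by numbers of the form $[1;c_{1},c_{2},\dots]$ with $c_{i}\in\{1,2\}$ forced by the periodic tail.

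Next, for the \emph{if} direction of the locating-chain characterization, the invariance $\rep(c\x)=\rep(\x)$ under prepending a finite prefix lets me reduce to the purely periodic case $\overline{ab}$ and $\overline{b^{2}a^{2}}$, where the closed-form formula gives $\mu_{2}$ and $\mu_{3}$ directly. The \emph{only if} direction needs to rule out all other tails producing the same two values; this is merged with the gap analysis below.

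To establish the gaps $(\mu_{2},\mu_{max})$ and $(\mu_{3},\mu_{2})$, I would proceed by dichotomy on the tail of the locating chain of an arbitrary $\x$. If the chain is not eventually $\overline{a}$ nor $\overline{b}$, then both letters occur infinitely often, and one shows by computing the contribution of the resulting alternations to $r(n,\x)/n$ along an appropriate subsequence $n_{k}$ that $\rep(\x)\le \mu_{2}$, with equality forcing the tail to be exactly $\overline{ab}$. Refining this, if the chain is moreover not eventually $\overline{ab}$, at least one longer mixed block occurs infinitely often, and the analogous estimate yields $\rep(\x)\le \mu_{3}$, with equality forcing the tail to be $\overline{b^{2}a^{2}}$. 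Combining these two steps delivers both the emptiness of the two intervals and the characterizations of the words realizing $\mu_{2}$ and $\mu_{3}$ in a single pass.

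The main obstacle, as I see it, is the second step of the dichotomy: one must bound $\rep(\x)$ uniformly over a large family of possible tails, both periodic (with periods strictly longer than $ab$) and almost-periodic, and verify that none of them lands in $(\mu_{3},\mu_{2})$. Concretely, one needs a monotonicity or convexity argument on the continued fractions produced by each candidate block, together with a $\liminf$-stability argument to handle aperiodic tails. I expect that the extremal periodic tails to beat are the families $\overline{a^{k}b}$ and $\overline{(ab)^{k}}$ as $k$ varies, together with $\overline{b^{2}a^{2}}$ itself; checking that $\overline{b^{2}a^{2}}$ strictly dominates all other non-$\overline{ab}$ tails is the computationally delicate core of the proof, and once secured, the rest of the theorem follows by assembling the pieces.
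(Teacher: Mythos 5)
Your outline follows the same broad strategy as the paper: reduce $\rep(\x)$ to data read off the tail of the locating chain, evaluate the periodic tails $\overline{ab}$ and $\overline{b^2a^2}$ to get $\mu_2$ and $\mu_3$, and then show every other mixed tail falls to $\mu_3$ or below. However, the step you yourself identify as the ``computationally delicate core'' is exactly where the content of the theorem lives, and your plan for it would not go through as stated. The extremal families you propose to check, $\overline{a^k b}$ and $\overline{(ab)^k}$, are not the right ones: $\overline{(ab)^k}$ is just $\overline{ab}$, and a purely periodic list cannot be exhaustive. What the paper actually does is a case analysis on which \emph{local adjacencies of chain lengths} occur infinitely often, organized by the filtration $S_1\supset S_2\supset S_3$: (a) if some chain of length $\ge 3$ recurs ($\x\in S_3$), then $\rep(\x)\le\max\{1+\varphi^2+\varphi^7,\,1+\varphi/(1+2\varphi^3)\}<\mu_3$; (b) if $\x\in S_2\cap S_3^c$ and the tail is not $\overline{a^2b^2}$, then one of the two adjacencies ``$l_j=1,\ m_{j+1}=2$'' or ``$m_j=1,\ l_j=2$'' recurs (because length-$1$ and length-$2$ chains both recur, forcing infinitely many transitions between them), and each of these yields an explicit bound ($1+\varphi^2+\varphi^7+\varphi^8$, resp.\ $1+\varphi/(\varphi^7+\varphi^2+1)$) strictly below $\mu_3$. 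This finite list of recurring local patterns, each with its own quantitative estimate, is what replaces your hoped-for ``monotonicity or convexity argument'' and also disposes of the aperiodic tails without any separate $\liminf$-stability argument. Note also that these estimates are only tractable after the reduction of Lemma \ref{ComputationOnRep}, which says one may restrict the $\liminf$ to consecutive pairs coming from $\Lambda'_2(\x)\cup\Lambda'_3(\x)$; your proposal does not invoke (or substitute for) this reduction.

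A second, smaller but genuine error: you justify the ``if'' direction by invoking $\rep(c\x)=\rep(\x)$ to ``reduce to the purely periodic case.'' A locating-chain prefix $u$ is a word over $\{\I,\II,\III\}$ recording which case $\x$ occupies at low levels; it is not a binary prefix of $\x$, and the words with locating chain $u\overline{ab}$ are not obtained from a fixed word by prepending finite binary words (prepending an arbitrary finite word need not even preserve Sturmianity). The correct reason $\rep(\x)$ depends only on the tail of the locating chain is that $\rep(\x)=\liminf_i(1+n_i/n_{i+1})$ with $n_i\in\Lambda(\x)$ given by Lemma \ref{PositionToCompute} in terms of $q_k$ and $|W_k|$, and the asymptotic ratios $q_{k-1}/q_k$ and $|W_k|/q_k$ are governed by the tail alone. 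With that substitution, and with the adjacency case analysis above carried out, your plan matches the paper's proof.
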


\begin{thm}\label{TheThirdRightGap}
The interval $\left(\mu_{4}, \mu_{3}\right)$ is a maximal gap in $\LLL(\varphi)$.
Moreover, $\rep(\x)=\mu_{4}$ if and only if $\x \in S_{2}\cap S_{3}^{c}$ satisfies the following two conditions:\\
1) The locating chain of $\x$ is $u(b^2 a^2)^{e_1}ba(b^2 a^2)^{e_2}ba\dots$ for some finite word $u$ and integers $e_i \ge 1$.\\
2) $\limsup\limits_{i\ge1}\{e_{i}\}=\infty$.\\
Furthermore, $\mu_{4}$ is a limit point of $\LLL(\varphi)$.
\end{thm}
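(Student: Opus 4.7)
The argument will build directly on the locating-chain framework developed for Theorems \ref{TheMaxMinForGoldenRatio} and \ref{TheRightGap}, pushed one level deeper in the hierarchy of possible tails. The key tool is an explicit block-wise formula (of the kind already used to pin down $\mu_3$) that expresses $r(n,\x)/n$, along a distinguished subsequence of $n$, as an explicit rational function of the lengths of the successive blocks appearing in the locating chain of $\x$. I would first isolate the candidate value $\mu_4$ as the limit, as $e\to\infty$, of the expression produced by a chain tail of the form $(b^2a^2)^e\,ba$, evaluated at the scale immediately following the $ba$-insertion. The hypothesis $\x\in S_2\cap S_3^c$ is then exactly what is needed for the chain to reach level two without collapsing into the level-three setting already handled by Theorem \ref{TheRightGap}.

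For the \emph{if} direction of the characterisation I would evaluate $\liminf_{n\to\infty} r(n,\x)/n$ by splitting the sequence of $n$'s according to the block boundaries in $(b^2a^2)^{e_i}ba(b^2a^2)^{e_{i+1}}ba\cdots$: on scales sitting inside $(b^2a^2)^{e_i}$ the ratio tends to $\mu_3$ as $e_i\to\infty$, while on the scale immediately after the $i$th $ba$-insertion the ratio equals (a quantity tending to) a common insertion value $\mu_4$. The hypothesis $\limsup e_i=\infty$ then forces the liminf to be exactly $\mu_4$. For the \emph{only if} direction, and for the maximal-gap statement $(\mu_4,\mu_3)\cap\LLL(\varphi)=\emptyset$, I would case-analyse the possible tails of the chain: tails with only finitely many occurrences of $b^2a^2$ give $\rep(\x)\le\mu_2$ by Theorem \ref{TheRightGap}; tails eventually equal to $\overline{b^2a^2}$ give $\rep(\x)=\mu_3$; tails containing $b^2a^2$ infinitely often but either with a separator strictly longer than $ba$ or with a bounded sequence $(e_i)$ give $\rep(\x)$ strictly below $\mu_4$ via the same block-wise formula. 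This exhausts the possibilities, so no value of $\rep$ can lie in the open interval $(\mu_4,\mu_3)$.

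To show that $\mu_4$ is a limit point, I would exhibit, for each $N\ge 1$, a Sturmian word $\x^{(N)}$ whose chain tail has the prescribed form with $e_i^{(N)}\equiv N$; since $\limsup_i e_i^{(N)}=N$ is finite, the preceding case analysis gives $\rep(\x^{(N)})<\mu_4$, while the block-wise formula depends explicitly on $N$ in a way that forces $\rep(\x^{(N)})\to\mu_4$ as $N\to\infty$. The principal obstacle throughout is the bookkeeping at the block boundaries: the formula for $r(n,\x)/n$ requires careful tracking of how prefixes and occurrences of length-$n$ factors overlap across the $(b^2a^2)^{e_i}\to ba\to(b^2a^2)^{e_{i+1}}$ transitions, and making this precise enough to distinguish $\mu_4$-achieving tails from merely sub-$\mu_4$ ones is where the real work lies. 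I expect the proof of the gap to be the hardest step, since one must rule out, uniformly over all admissible tails, any configuration producing a liminf in the narrow window $(\mu_4,\mu_3)$.
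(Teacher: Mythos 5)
Your overall strategy is the paper's: reduce to $\x\in S_2\cap S_3^c$, classify the possible tails of the locating chain, bound $\rep(\x)$ by explicit block-wise formulas for each tail pattern, and obtain $\mu_4$ as a limit point via the periodic words with tail $\overline{(b^2a^2)^dba}$. The treatment of bounded $(e_i)$ and the limit-point construction coincide with the paper's. However, there is a genuine gap in your case analysis for the maximal-gap statement. You dispose of ``tails with only finitely many occurrences of $b^2a^2$'' by citing Theorem \ref{TheRightGap} for the bound $\rep(\x)\le\mu_2$. That bound is vacuous here: for $\x\in S_2$ that theorem already gives $\rep(\x)\le\mu_3$, with equality only for the tail $\overline{b^2a^2}$, so a word whose tail is, say, $\overline{b^2a}$, $\overline{ba^2}$ or $\overline{b^2aba}$ --- which lies in $S_2\cap S_3^c$ and contains no occurrence of $b^2a^2$ at all --- is only known to satisfy $\rep(\x)<\mu_3$, and nothing you have said excludes such a word from having $\rep(\x)\in(\mu_4,\mu_3)$. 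These configurations are not caught by your third case either, since that case presupposes infinitely many occurrences of $b^2a^2$. Ruling them out is where most of the actual work lies: the paper does it by estimating the relevant ratios $n_i/n_{i+1}$ (through the accumulated lengths $|W_k|$) to show that each of the pattern sets $\{j:l_j=1,\ m_{j+1}=2\}$, $\{j:m_j=1,\ l_{j-1}=l_j=2\}$, $\{j:m_{j-1}=m_j=1,\ l_{j-1}=1,\ l_j=2\}$ and $\{j:m_j=m_{j+1}=1\}$ must be finite whenever $\rep(\x)>\mu_4$, and only then deduces that the tail has the form $(b^2a^2)^{e_1}ba(b^2a^2)^{e_2}ba\cdots$. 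Without estimates of this kind your enumeration of tails is not exhaustive and the gap $(\mu_4,\mu_3)$ is not established.

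A secondary point: in the ``if'' direction you assert that $\limsup_i e_i=\infty$ forces the liminf to equal $\mu_4$ exactly, but when infinitely many $e_i$ stay small the insertion ratio after a short $(b^2a^2)^{e_i}$ block depends on the whole past through $|W_{k+1}|$, and one must verify it never drops below $\mu_4-1$; this requires the same $|W_k|$ bookkeeping as above and should not be taken for granted.
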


Indeed, 
\begin{align*}
    \mu_{max}&=1+\varphi=1.618\dots, \hspace{2mm}
    \mu_{2}=1+{2\varphi^3}=1.472\dots, \hspace{2mm}
    \mu_{3}=1+\frac{\varphi^2 (\varphi^4+\varphi^2+1)}{\varphi^5+\varphi^3+1}=1.440\dots,\\
    \mu_{4}&=1+\frac{1-\varphi^6}{1+2\varphi-2\varphi^7+\varphi^{11}}=1.435\dots, \hspace{2mm}
    \mu_{min}=1+\varphi^2=1.382\dots.
\end{align*}

\begin{figure}[H]
    \centering
    \begin{tikzpicture}[scale=2.5]

        \draw[<->] (0,0) -- (6,0);
        \filldraw [black] (0.5,0) circle (0.5pt) node[below] {$\mu_{min}$};
        \filldraw [black] (1.45,0) circle (0.3pt);
        \filldraw [black] (1.5,0) circle (0.3pt);
        \filldraw [black] (1.55,0) circle (0.3pt);
        \filldraw [black] (1.6,0) circle (0.5pt) node[below] {$\mu_{4}$};
        \filldraw [black] (1.8,0) circle (0.5pt) node[below] {$\mu_{3}$};
        \filldraw [black] (2.3,0) circle (0.5pt) node[below] {$\mu_{2}$};
        \filldraw [black] (5.3,0) circle (0.5pt) node[below] {$\mu_{max}$};
        
        \draw [decorate,decoration={brace,amplitude=3pt},xshift=0pt,yshift=0.5pt]
        (1.6,0) -- (1.8,0) node[black,midway,yshift=7pt] 
        {gap};
        \draw [decorate,decoration={brace,amplitude=4pt},xshift=0pt,yshift=0.5pt]
        (1.8,0) -- (2.3,0) node[black,midway,yshift=8pt] 
        {gap};
        \draw [decorate,decoration={brace,amplitude=5pt},xshift=0pt,yshift=0.5pt]
        (2.3,0) -- (5.3,0) node[black,midway,yshift=9pt] 
        {gap};
    
    \end{tikzpicture}
    \caption{$\mu_{max},\mu_{2},\mu_{3},\mu_{4},\mu_{min}$ in $\LLL(\varphi)$}
    \label{fig:my_label}
\end{figure}
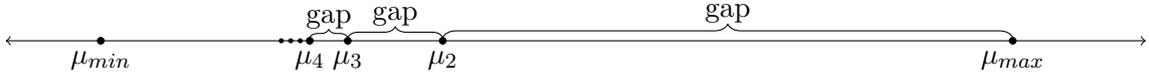

For $\alpha\in\{\mu_{max}, \mu_{2}, \mu_{3}, \mu_{4}, \mu_{min}\}$, we obtain the cardinality of the set of Sturmian words $\x$ satisfying $\rep(\x)=\alpha$.

\begin{prop}\label{Cardinality_Of_Sturmian words}
For $\alpha\in\{\mu_{max},\mu_{2},\mu_{3}\}$, there are only countably many Sturmian words $\x$ of slope $\varphi$ satisfying $\rep(\x) = \alpha.$
For $\beta\in\{\mu_{4}, \mu_{min}\}$, there are uncountably many Sturmian words $\x$ of slope $\varphi$ satisfying $\rep(\x) = \beta.$
\end{prop}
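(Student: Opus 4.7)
The plan is to reduce both halves of the proposition to a counting problem on locating chains, exploiting the explicit characterizations already supplied by Theorems \ref{TheMaxMinForGoldenRatio}, \ref{TheRightGap}, and \ref{TheThirdRightGap}. Throughout, I would invoke the basic fact, developed in Section \ref{GoldenRatio}, that the map assigning to each Sturmian word of slope $\varphi$ its locating chain is at most countable-to-one (the countable ambiguity coming from the choice of intercept). Consequently, a family of Sturmian words has the same cardinality as the family of their locating chains, modulo a countable factor.

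For the countable cases $\alpha\in\{\mu_{\max},\mu_2,\mu_3\}$, I would simply read off the characterizations: every admissible locating chain is eventually periodic with one of the fixed tails $\overline{a},\overline{b},\overline{ab},\overline{b^2a^2}$, and the only freedom is the finite prefix, which ranges over the countable free monoid $\{a,b\}^{\ast}$. Since $\{a,b\}^{\ast}$ is countable, so are the resulting families of Sturmian words.

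For the uncountable cases, I would construct explicit injections from $\{0,1\}^{\mathbb{N}}$ into the corresponding families of locating chains. For $\mu_4$, Theorem \ref{TheThirdRightGap} prescribes the shape $u(b^2a^2)^{e_1}ba(b^2a^2)^{e_2}ba\cdots$ with $e_i\ge 1$ and $\limsup_i e_i=\infty$; given $(\varepsilon_j)\in\{0,1\}^{\mathbb{N}}$, I would fix the even-indexed exponents $e_{2j}:=j$ to enforce $\limsup_i e_i=\infty$ and let the odd-indexed ones $e_{2j-1}:=1+\varepsilon_j$ encode the bits, obtaining $2^{\aleph_0}$ pairwise distinct admissible chains. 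For $\mu_{\min}$, I would carry out an analogous construction after unfolding the condition $\x\in\bigcap_{d\ge 1}S_d$ at the level of locating chains: one track of entries is held fixed to maintain membership in every $S_d$, while a second track is free to encode an arbitrary binary sequence.

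The step requiring the most care is the admissibility check for these encodings: one must verify that each prescribed chain truly arises as the locating chain of some Sturmian word of slope $\varphi$ and that it satisfies the extra constraints $\x\in S_2\cap S_3^c$ for $\mu_4$ and $\x\in\bigcap_{d\ge 1}S_d$ for $\mu_{\min}$. The $\mu_{\min}$ part will be the main obstacle, since membership in every $S_d$ simultaneously is a global pattern on the chain rather than a purely local one; confirming that an uncountable subfamily of the construction survives every $S_d$ will hinge on the detailed description of the sets $S_d$ set up earlier in the paper.
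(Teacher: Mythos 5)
Your proposal is correct and follows essentially the same route as the paper: countability for $\mu_{max},\mu_{2},\mu_{3}$ because the locating chain is eventually periodic with a fixed tail and only the countable finite prefix varies, and uncountability for $\mu_{4},\mu_{min}$ by exhibiting $2^{\aleph_0}$ admissible locating chains via the characterizations in Theorems \ref{TheMaxMinForGoldenRatio} and \ref{TheThirdRightGap}. The paper is in fact terser than you are---it simply asserts that $2^{\aleph_0}$ sequences $\{e_i\}$ (resp.\ $\{m_i\}\cup\{l_j\}$) can be chosen, without spelling out your explicit two-track encoding or dwelling on the realizability of the prescribed chains, which it takes for granted throughout.
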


The article is organized as follows. 
In Section \ref{Def_and_Exam}, we define basic notations in Sturmian words and the locating chain of a Sturmian word. 
In Section \ref{GeneralSlope}, we investigate the set $\{n:r(n,\x)=2n+1\}$ for a Sturmian word $\x$.
We prove Theorem \ref{TheMinOfSpec} in Theorem \ref{Proof_TheMinOfSpec}.
In Section \ref{GoldenRatio}, we study $\rep(\x)$ for a Sturmian word $\x$ of slope $\varphi$.
We establish Theorem \ref{TheMaxMinForGoldenRatio}, \ref{TheRightGap}, and \ref{TheThirdRightGap} in Theorem \ref{Proof_TheMaxMinForGoldenRatio}, \ref{Proof_TheRightGap}, and \ref{Proof_TheThirdRightGap}, respectively. 
We prove Proposition \ref{Cardinality_Of_Sturmian words} in Proposition \ref{proof_Cardinality_Of_Sturmian_words}.

At the final stage of writing the paper, the author found an independent result by Bugeaud and Laurent \cite{BL}:
Theorem \ref{TheMinOfSpec} is equivalent to Theorem 2.6 in \cite{BL}. 
For a given irrational number $\theta\in(0,1)$, Bugeaud and Laurent found the maximum of the irrationality exponents of Sturmian numbers of slope $\theta$. 
Using Theorem 4.5 in \cite{BK19} and Section 4 in \cite{B27}, Theorem \ref{TheMinOfSpec} is induced.

\section{Basic definitions and examples}\label{Def_and_Exam}

Let us begin by some notations in Sturmian words of slope $\theta$ where $\theta\in(0,1)$ is an irrational number.
Let $\theta:=[0 ;a_1 , a_2 , \dots]$. We define a sequence $\{M_{k}\}_{k\ge 0}$ in the following way: 
Let us define $M_0:=0$, $M_1:=0^{a_{1}-1}1$, and $M_{k+1}:=M_{k}^{a_{k+1}}M_{k-1}$ for $k\ge 1 $.
Let us denote the numerator and denominator of the $k$th principal convergent of $\theta$ by $p_{k}, q_{k}$, respectively.
We denote the length of $M_k$ by $|M_k|$.
Since $|M_{1}|=a_{1}=q_{1}$, $|M_{0}|=1=q_{0}$, and $|M_{k+1}|=a_{k+1}|M_{k}|+|M_{k-1}|$ for $k\ge1$, it is obvious that $|M_k|=q_{k}$. 
The \emph{characteristic Sturmian word} of slope $\theta$ is obtained by
$\mathbf{c}_{\theta}:=\lim\limits_{k\to\infty}{M_k}.$

For a non-empty finite word $V$, let us denote by $V^{-}$ the word $V$ with the last letter removed.
Let $k\ge1.$
Note that $M_{k}M_{k-1}$ and $M_{k-1}M_{k}$ are identical, except for the last two letters \cite{L02}*{Proposition 2.2.2}. 
Let $\widetilde{M}_{k}:=M_{k}M_{k-1}^{--}=M_{k-1}M_{k}^{--}$.
The last two letters of $M_{k}M_{k-1}$ is 01 (resp., 10) if and only if the last two letters of $M_{k-1}M_{k}$ is 10 (resp., 01). 
We denote by $D_{k}$, $D'_{k}$ the last two letters such that $M_{k}M_{k-1}=\widetilde{M}_{k}D_{k}$, $M_{k-1}M_{k}=\widetilde{M}_{k}D'_{k}$, respectively.

In this section, let $\x = x_1 x_2 \dots $ be a Sturmian word of slope $\theta$. By Lemma 7.2 in \cite{BK19}, for any $k\ge 1$, there exists a unique word $W_k$ satisfying one of the following cases
\begin{enumerate}[label=(\roman*)]
    \item $\x=W_{k}M_{k}\widetilde{M_k}\dots$, where $W_k$ is a non-empty suffix of $M_k$,
    \item $\x=W_{k}M_{k-1}M_{k}\widetilde{M_k}\dots$, where $W_k$ is a non-empty suffix of $M_k$,
    \item $\x=W_{k}M_{k}\widetilde{M_k}\dots$, where $W_k$ is a non-empty suffix of $M_{k-1}$.
\end{enumerate}
For case (i) and case (ii), there exist $q_{k}$ non-empty suffices of $M_k$. For case (iii), there exist $q_{k-1}$ non-empty suffices of $M_{k-1}$. 
Lemma 7.2 in \cite{BK19} also gives that all the $(2q_k+q_{k-1})$ cases are mutually exclusive.
For each $k\ge1$, we say that $\x$ \emph{belongs to case (i), (ii), (iii) at level $k$} if $W_{k}$ satisfies case (i), (ii), (iii), respectively.
We denote by $\C_{k}^{\I}$, $\C_{k}^{\II}$, $\C_{k}^{\III}$ the set of Sturmian words which belong to case (i), (ii), (iii) at level $k$, respectively.
For each $\x$, we have an infinite sequence of (i), (ii) and (iii) for which case $\x$ belongs to at level $1, 2, \dots$, called the \emph{locating chain} of $\x$. 
In the locating chain of $\x$, let $u^d:=\underbrace{uu\dots u}_{d}$ where $u$ is a finite word of (i),(ii),(iii). 

\begin{exam}
Let $\x = \mathbf c_{\varphi}$, i.e. the characteristic Sturmian word of slope $\varphi=[0;\overline{1}]$.
Since $\x$ starts with $M_{k+1}M_{k}M_{k+1}=M_{k}M_{k-1}M_{k}\widetilde{M}_{k}D_{k}$ for any $k\ge1$, $\x\in\C_{k}^{\II}$ and $W_{k}=M_{k}$ for all $k\ge1$. 
Hence, the locating chain of $\x$ is $\overline{\II}$.
\end{exam}

\begin{exam}
Let $\x = 1 \mathbf c_{\varphi}$. 
Since $\x$ starts with $1M_{k+2}=1M_{k}M_{k-1}M_{k}=1M_{k}\widetilde{M}_{k}D'_{k}$ for any $k\ge1$, $W_{k}=1$ for $k\ge1$. 
Moreover, $\x\in\C_{k}^{\I}$ if $k$ is odd, and $\x\in\C_{k}^{\III}$ if $k$ is even. 
Hence, the locating chain of $\x$ is $\overline{\I\III}$.
\end{exam}

\begin{exam}
Let $\x=10101M_{4}M_{5}\dots$.
Since $\x$ starts with $W_{1}M_{0}M_{1}=101$, $\x\in\C_{1}^{\II}$ and $W_{2}=W_{1}M_{0}=10$. 
Since $\x$ starts with $W_{2}M_{2}\widetilde{M}_{2}=10101$, $\x\in\C_{2}^{\I}$ and $W_{3}=W_{2}=10$.
Since $\x$ starts with $W_{3}M_{3}M_{4}=W_{3}M_{3}\widetilde{M}_{3}D_{3}$, $\x\in\C_{3}^{\III}$ and $W_{4}=W_{3}=10$.
Since $\x$ starts with $W_{4}M_{3}M_{4}\widetilde{M}_{4}$, $\x\in\C_{4}^{\II}$ and $W_{5}=W_{4}M_{3}=10101$.
Moreover, for $k\ge5$, $\x$ starts with $W_{k}M_{k-1}M_{k}\widetilde{M}_{k}$ and $W_{k+1}=W_{k}M_{k-1}$ where $W_{k}$ is a non-empty suffix of $M_{k}$. 
Hence, $\x\in\C_{k}^{\II}$ for $k\ge5$.
Therefore, the locating chain of $\x=\II\I\III\overline{\II}$.
\end{exam}

Theorem 2.4 in \cite{BK19} says that $r(n,\x)\le 2n+1$ for all $n\ge1$ and equality holds for infinitely many $n$.
Let
$$
\Lambda(\x):= \{ n \in \mathbb N : r(n,\x) = 2n+1 \}.
$$
We have $\Lambda(\x)=\{ n_1, n_2, \dots \} $ for an increasing sequence $\{n_{i}\}_{i\ge1}$.
From Lemma 5.3 in \cite{BK19}, $r(n,\x)\le 2n$ implies $r(n,\x)\le r(n-1,\x)+1$. 
Thus,
\[
r(n, \x) = r(n-1, \x) +1 \ \text{ if } n \notin \Lambda(\x).
\]
Hence, the sequence $\left\{\dfrac{r(n,\x)}{n}\right\}_{n\ge1}$ is decreasing on each interval $[n_{i},n_{i+1}-1]$. It gives
\[
\mathrm{rep}(\x) = \liminf_{i \to \infty} \left( 1 + \frac{n_i}{n_{i+1}} \right).
\]

\section{The exponents of repetition of Sturmian words}\label{GeneralSlope}

Let $\x$ be a Sturmian word of slope $\theta:=[0;a_1 ,a_2 , \dots].$
For $k\ge1$, there is a relation between cases which $\x$ belongs to at level $k$ and $k+1$.

\begin{lem}\label{LocatingChainRule}
Let $k\ge1$. The following statements are satisfied.

(1) If $\x\in \C_{k}^{\I}$ and $\x\in \C_{k+1}^{\I}\cup\C_{k+1}^{\II}$, then $W_{k+1}=W_{k}M_{k}^t M_{k-1}$ for some $1\le t\le a_{k+1}-1$.

(2) If $\x\in \C_{k}^{\I}$ and $\x\in \C_{k+1}^{\III}$, then $W_{k+1}=W_{k}$.

(3) If $\x\in \C_{k}^{\II}$, then $\x\in \C_{k+1}^{\I}\cup\C_{k+1}^{\II}$ and $W_{k+1}=W_{k}M_{k-1}$.

(4) If $\x\in \C_{k}^{\III}$, then $\x\in \C_{k+1}^{\I}\cup\C_{k+1}^{\II}$ and $W_{k+1}=W_{k}$.

\end{lem}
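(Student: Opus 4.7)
The plan is to translate, case by case, the decomposition of $\x$ prescribed at level $k$ into one of the three decompositions at level $k+1$, and then invoke the uniqueness of the level $k+1$ decomposition (Lemma 7.2 in \cite{BK19}) to identify $W_{k+1}$. The main combinatorial tools are the recursion $M_{k+1}=M_k^{a_{k+1}}M_{k-1}$, the two identities $\widetilde{M_{k+1}}=M_kM_{k+1}^{--}=M_{k+1}M_k^{--}$, and the fact that $M_{k-1}M_k$ and $M_kM_{k-1}$ agree on their first $q_k+q_{k-1}-2$ letters. Together these allow me to bundle a run of $a_{k+1}$ consecutive $M_k$-blocks followed by an $M_{k-1}$-block into a single $M_{k+1}$-block, and to absorb an additional $M_k$ into the beginning of a $\widetilde{M_{k+1}}$-block.

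I would treat the easier conclusions (3) and (4) first. In case (3), starting from $\x=W_kM_{k-1}M_k\widetilde{M_k}\dots$, the Sturmian continuation of $\x$ forces the prefix after $W_kM_{k-1}$ to realize $M_{k+1}\widetilde{M_{k+1}}$; bundling gives $\x=(W_kM_{k-1})M_{k+1}\widetilde{M_{k+1}}\dots$. Since $W_kM_{k-1}$ is a non-empty suffix of $M_kM_{k-1}$, hence of $M_{k+1}$, uniqueness identifies $W_{k+1}=W_kM_{k-1}$ and forces $\x\in\C_{k+1}^{\I}\cup\C_{k+1}^{\II}$. Conclusion (4) is analogous: starting from $\x=W_kM_k\widetilde{M_k}\dots$ with $W_k$ a non-empty suffix of $M_{k-1}$, the same bundling yields $\x=W_kM_{k+1}\widetilde{M_{k+1}}\dots$, and $W_k$ remains a non-empty suffix of $M_{k-1}$, which is a suffix of $M_{k+1}$.

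For conclusions (1) and (2), starting from $\x=W_kM_k\widetilde{M_k}\dots$ with $W_k$ a non-empty suffix of $M_k$, I split based on how the continuation of $\x$ aligns with the level $k+1$ blocks. If the continuation absorbs further $M_k$-blocks so as to close off an $M_{k+1}$-block, then for some $t$ with $1\le t\le a_{k+1}-1$ the word $W_{k+1}=W_kM_k^tM_{k-1}$ is a non-empty suffix of $M_{k+1}$ and the remaining prefix of $\x$ matches $M_{k+1}\widetilde{M_{k+1}}$ or $M_kM_{k+1}\widetilde{M_{k+1}}$, giving conclusion (1). Otherwise, the first $M_k$-block after $W_k$ already starts a fresh $M_{k+1}$-block, so $W_{k+1}=W_k$ remains a non-empty suffix of $M_k$ with $\x=W_kM_{k+1}\widetilde{M_{k+1}}\dots$, placing $\x\in\C_{k+1}^{\III}$ and giving conclusion (2).

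The main obstacle will be to make the unfolding step rigorous, namely to justify that $\x$ actually extends beyond its known level $k$ prefix so as to realize a full $M_{k+1}\widetilde{M_{k+1}}$ pattern at level $k+1$. I expect this to follow by iteratively applying the recursion $M_{k+1}=M_k^{a_{k+1}}M_{k-1}$ together with the common prefix property of $M_kM_{k-1}$ and $M_{k-1}M_k$, combined with a length count using $|M_{k+1}|=q_{k+1}=a_{k+1}q_k+q_{k-1}$. The uniqueness statement of Lemma 7.2 in \cite{BK19} then forces the $W_{k+1}$ produced this way to have the stated form in each of the four cases.
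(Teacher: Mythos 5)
Your overall toolkit is the right one (the recursion $M_{k+1}=M_k^{a_{k+1}}M_{k-1}$, the identity $\widetilde{M}_{k+1}=M_{k+1}M_k^{--}=M_kM_{k+1}^{--}$, and the uniqueness in Lemma~7.2 of \cite{BK19}), but the direction of your argument creates a genuine gap, and it is exactly the step you flag as the ``main obstacle.'' You propose to start from the level-$k$ decomposition and \emph{bundle} blocks upward into a level-$(k+1)$ pattern, e.g.\ to pass from $\x=W_kM_{k-1}M_k\widetilde{M}_k\dots$ to $\x=(W_kM_{k-1})M_{k+1}\widetilde{M}_{k+1}\dots$. This cannot be justified by ``iteratively applying the recursion'' together with a length count: the level-$k$ hypothesis only pins down a prefix of $\x$ of length about $|W_k|+q_{k-1}+2q_k$, whereas the target decomposition asserts the content of a prefix of length about $|W_k|+q_{k-1}+2q_{k+1}$, which is strictly longer as soon as $a_{k+1}\ge 2$ (and even for $a_{k+1}=1$ you still need letters beyond the known prefix). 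No identity among the $M_j$ can manufacture the missing letters of $\x$; the only source of that information is the level-$(k+1)$ decomposition itself, whose \emph{existence} Lemma~7.2 guarantees. So the bottom-up bundling is not a proof strategy but a restatement of what must be shown.

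The workable argument runs in the opposite direction, which is what the paper does. Take the level-$(k+1)$ decomposition given by Lemma~7.2 and unfold it \emph{downward}: every level-$(k+1)$ pattern begins, after its witness $W_{k+1}$, with $M_k\widetilde{M}_k$ or $M_{k-1}M_k\widetilde{M}_k$, so only prefixes are needed. Then enumerate the possible shapes of $W_{k+1}$ as a non-empty suffix of $M_{k+1}=M_k^{a_{k+1}}M_{k-1}$ (a suffix of $M_{k-1}$; or $W'M_{k-1}$ with $W'$ a suffix of $M_k$; or $W''M_k^tM_{k-1}$ with $1\le t\le a_{k+1}-1$), read off which level-$k$ case each shape would force $\x$ into, discard the shapes incompatible with the level-$k$ hypothesis using the mutual exclusivity of the $2q_k+q_{k-1}$ cases, and finally identify the surviving witness with $W_k$ by uniqueness at level $k$. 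This elimination step --- using the known level-$k$ case to constrain the form of $W_{k+1}$ --- is entirely absent from your proposal and is what actually proves statements (1)--(4); as written, your argument does not close.
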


\begin{proof}
In this proof, for all $k\ge1$, let $W_{k}$ be the unique prefix of $\x$ defined in which case $\x$ belongs to at level $k$.
Note that $\widetilde{M}_{k}$ is a prefix of $M_{k+1}^{--}$ by definition.

(1) Let $\x\in \C_{k+1}^{\I}\cup\C_{k+1}^{\II}$.
Note that $\x$ starts with $W_{k+1}M_{k}\widetilde{M}_{k}$ for the suffix $W_{k+1}$ of $M_{k+1}$. 
If $W_{k+1}$ is a non-empty suffix of $M_{k-1}$, then $\x\in\C_{k}^{\III}$. 
It is a contradiction. 
If $W_{k+1}=W'_{k}M_{k-1}$ for some non-empty suffix $W'_{k}$ of $M_{k}$, then $\x$ starts with $W'_{k}M_{k-1}M_{k}\widetilde{M}_{k}$. 
Thus, $\x\in\C_{k}^{\II}$. 
It is a contradiction. 
Hence, $a_{k+1}>1$ and $W_{k+1}=W''_{k}M_{k}^{t}M_{k-1}$ for some $1\le t\le a_{k+1}-1$ and some non-empty suffix $W''_{k}$ of $M_{k}$. 
Consequently, $\x$ starts with $W''_{k}M_{k}\widetilde{M}_{k}$. 
By the uniqueness of $W_{k}$, $W_{k}=W''_{k}$. It gives $W_{k+1}=W_{k}M_{k}^{t}M_{k-1}$.

(2) Let $\x\in \C_{k+1}^{\III}$. We have $\x=W_{k+1}M_{k+1}\widetilde{M}_{k+1}$ for a non-empty suffix $W_{k+1}$ of $M_{k}$.
Since $M_{k+1}\widetilde{M}_{k+1}=M_{k}M_{k+1}^{--}D_{k+1}M_{k+1}^{--}$, $\x$ starts with $W_{k+1}M_{k}\widetilde{M}_{k}$.
By the uniqueness of $W_{k}$, $W_{k+1}=W_{k}$.

(3) Let $\x\in \C_{k}^{\II}$.
Note that $\x$ starts with $W_{k}M_{k-1}M_{k}\widetilde{M}_{k}$ for a non-empty suffix $W_{k}$ of $M_{k}$.
Assume that $\x\in \C_{k+1}^{\III}$.
Since $\x$ starts with $W_{k+1}M_{k+1}\widetilde{M}_{k+1}$ for the suffix $W_{k+1}$ of $M_{k}$, $\x$ starts with $W_{k+1}M_{k}\widetilde{M}_{k}$. 
Hence, $\x\in \C_{k}^{\I}$. 
It is a contradiction. 
Hence, $\x\in \C_{k+1}^{\I}\cup\C_{k+1}^{\II}$.
Thus, $\x$ starts with $W_{k+1}M_{k}\widetilde{M}_{k}$ for the suffix $W_{k+1}$ of $M_{k+1}$.
If $W_{k+1}$ is a non-empty suffix of $M_{k-1}$, then $\x\in \C_{k}^{\III}$.
It is a contradiction.
If $W_{k+1}=W'_{k}M_{k}^{t}M_{k-1}$ for some $1\le t\le a_{k+1}-1$ and some non-empty suffix $W'_{k}$ of $M_{k}$, then $\x$ starts with $W'_{k}M_{k}\widetilde{M}_{k}$.
Thus, $\x\in \C_{k}^{\I}$.
It is a contradiction.
Hence, $W_{k+1}=W'_{k}M_{k-1}$ for some non-empty suffix $W'_{k}$ of $M_{k}$. 
By the uniqueness of $W_{k}$, $W_{k+1}=W_{k}M_{k-1}$.

(4) Let $\x\in \C_{k}^{\III}$.
Note that $\x$ starts with $W_{k}M_{k}\widetilde{M}_{k}$ for the suffix $W_{k}$ of $M_{k-1}$.
Assume that $\x\in \C_{k+1}^{\III}$.
Since $\x$ starts with $W_{k+1}M_{k+1}\widetilde{M}_{k+1}$ for the suffix $W_{k+1}$ of $M_{k}$, $\x$ starts with $W_{k+1}M_{k}\widetilde{M}_{k}$. 
Hence, $\x\in \C_{k}^{\I}$. 
It is a contradiction. 
Hence, $\x\in \C_{k+1}^{\I}\cup\C_{k+1}^{\II}$.
Thus, $\x$ starts with $W_{k+1}M_{k}\widetilde{M_{k}}$ for the suffix $W_{k+1}$ of $M_{k+1}$.
If $W_{k+1}=W'_{k}M_{k}^{t}M_{k-1}$ for $0\le t\le a_{k+1}-1$ and some non-empty suffix $W'_{k}$ of $M_{k}$, then $\x$ starts with $W'_{k}M_{k-1}M_{k}\widetilde{M_{k}}$ or $W'_{k}M_{k}\widetilde{M_{k}}$.
Thus, $\x\in \C_{k}^{\I}\cup\C_{k}^{\II}$.
It is a contradiction.
Hence, $W_{k+1}=W'_{k-1}$ for some non-empty suffix $W'_{k-1}$ of $M_{k-1}$. 
By the uniqueness of $W_{k}$, $W_{k+1}=W_{k}$.
\end{proof}

\begin{table}[H]\label{TableLocatingChainRule}
    \begin{tabular}{ c | c | c }
      \hline
      $k$   &  $k+1$  & The relation between $W_{k+1}$ and $W_{k}$\\
      \hline \hline
      \multirow{2}{*} {case (i)} & case (i) case (ii)  &   $W_{k+1} = W_{k}{M_{k}}^{t}M_{k-1}$ $(1\le t\le a_{k+1}-1)$ \\
      \hhline{~--}& case (iii)  &   $W_{k+1} = W_k$ \\
      \hline
      case (ii) & case (i) case (ii) &  $W_{k+1} = W_k M_{k-1}$ \\
      \hline
      case (iii) & case (i) case (ii) & $W_{k+1} = W_k$ \\
      \hline
    \end{tabular}
    \captionof{table}{The relation between $W_{k+1}$ and $W_{k}$}
\end{table}


Let 
\begin{align*}
    &u_{t,k}=tq_{k}+q_{k-1}-1, &&v_{t,k}=|W_{k}|+tq_{k}+q_{k-1}-1,\\
    &u'_{k}=q_{k+1}-1,  &&v'_{k}=|W_{k}|+q_{k+1}-1.\end{align*}
The following lemma shows that all of elements in $\Lambda(\x)\cap[\FPG{1},\infty)$ are expressed in terms of $q_{k}$'s and $|W_{k}|$'s.

\begin{lem}\label{PositionToComputeGeneralcase}
Let $k\ge1$.

(1) If $\x\in\C_{k}^{\I}\cap\C_{k+1}^{\I}$, then 
\begin{align*}
    &\Lambda(\x) \cap [\FPG{k}, \FPG{k+1}-1]
    =
    \begin{cases}
    \{ \FPGW[t]{k}, \FP[k]\}&\textrm{for } t=a_{k+1}-1,\\
    \{ 
    \FPGW[t]{k}, \FPG[t+1]{k},
    \FPGW[t+1]{k}, \FP[k] 
    \}&\textrm{for } t<a_{k+1}-1
    \end{cases}
\end{align*}
where $t$ satisfies $W_{k+1}=W_{k}{M_{k}}^{t}M_{k-1}$.

(2) If $\x\in\C_{k}^{\I}\cap\C_{k+1}^{\II}$, then
\begin{align*}
    &\Lambda(\x) \cap [\FPG{k}, \FPG{k+1}-1]
    = \{ \FPGW[t]{k}, \FPG[t+1]{k}, \FPGW[t+1]{k}\}
\end{align*}
where $t$ satisfies $W_{k+1}=W_{k}{M_{k}}^{t}M_{k-1}$.

(3)
If $\x\in\C_{k}^{\I}\cap\C_{k+1}^{\III}\cap\C_{k+2}^{\I}$, then
\[ 
\Lambda(\x) \cap [\FPG{k}, \FPG{k+2}-1] = \{ \FPW[k], \FP[k+1] \}.
\]

(4) If $\x\in\C_{k}^{\I}\cap\C_{k+1}^{\III}\cap\C_{k+2}^{\II}$, then
\[ 
\Lambda(\x) \cap [\FPG{k}, \FPG{k+2}-1] = \{ \FPW[k] \}.
\]

(5) If $\x\in\C_{k}^{\II}\cap\C_{k+1}^{\I}$, then
\begin{align*}
    &\Lambda(\x) \cap [\FPG{k}, \FPG{k+1}-1]
    =
    \begin{cases}
    \{ \FPG{k}\}&\textrm{for } a_{k+1}=1,\\
    \{ 
    \FPG{k}, \FPGW{k}, \FP[k]
    \}&\textrm{for } a_{k+1}>1.
    \end{cases}
\end{align*}

(6) If $\x\in\C_{k}^{\II}\cap\C_{k+1}^{\II}$, then
\[ 
\Lambda(\x) \cap [\FPG{k}, \FPG{k+1}-1] = \{  \FPG{k}, \FPGW{k} \}.
\]
\end{lem}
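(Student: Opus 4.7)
The plan is to handle the six cases of the lemma separately. Using Lemma~\ref{LocatingChainRule}, I will produce an explicit prefix of $\x$ long enough to determine $r(n,\x)$ throughout the relevant interval: namely $W_{k+1}M_{k+1}\widetilde M_{k+1}$ in cases (1), (2), (5), (6), and $W_{k+2}M_{k+2}\widetilde M_{k+2}$ in cases (3) and (4). The main combinatorial inputs are the identities $\widetilde M_k=M_{k-1}M_k^{--}=M_kM_{k-1}^{--}$ and their consequence $M_k\widetilde M_k=M_k^{2}M_{k-1}^{--}$, together with the classical Sturmian fact that, for each length, there is a unique right-special factor of the Sturmian word of slope $\theta$, with $\widetilde M_k$ playing the role of the palindromic bispecial factor of length $q_k+q_{k-1}-2$.

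For each case, the argument splits into a lower bound and an upper bound on $r(n,\x)$. To show that a listed index $n$ lies in $\Lambda(\x)$, I will verify directly from the explicit prefix that every length-$n$ factor of $x_1\cdots x_{2n}$ occurs there at most once, which gives $r(n,\x)>2n$ and hence $r(n,\x)=2n+1$; uniqueness is enforced by the classical combinatorics of Sturmian factors, since for the specific $n$ on the list any collision would force two copies of a right-special factor of a length that admits only one such factor. Conversely, for $n$ in the interval that is not on the list, I will exhibit an explicit second occurrence within $x_1\cdots x_{2n}$ by translating a block-aligned occurrence of an $M_k$- or $M_{k-1}$-block by the appropriate Sturmian period; the bound $r(n,\x)\le 2n$ then follows from the inequality derived via Lemma~5.3 of \cite{BK19}.

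The main technical difficulty lies in cases (3) and (4), which skip over level $k+1$ (because $\x\in\C_{k+1}^{\III}$) and therefore must be analyzed on the wider interval $[\FPG{k},\FPG{k+2}-1]$; here the form of $W_{k+2}$ obtained by composing items (2) and (4) of Lemma~\ref{LocatingChainRule} must be combined with the block decomposition at level $k+2$ to show that most candidate indices collapse out, leaving only $\FPW[k]$ together with $\FP[k+1]$ in case (3). In cases (1), (2), (5), (6) the bookkeeping is lighter, but one must still carefully distinguish the nearby pair $\FPGW[t]{k}$ and $\FPG[t+1]{k}$, whose difference equals $q_k-|W_k|$, and verify that the gaps between consecutive listed indices, as well as the interval from $\FPW[k]$ (or $\FPGW[t+1]{k}$) up to $\FPG{k+1}-1$, contain no element of $\Lambda(\x)$.
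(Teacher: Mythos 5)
Your overall architecture coincides with the paper's: a case-by-case analysis driven by Lemma \ref{LocatingChainRule}, explicit prefixes $W_{k+1}M_{k+1}\widetilde M_{k+1}$ (resp.\ $W_{k+2}M_{k+2}\widetilde M_{k+2}$ in cases (3)--(4)), the identities $\widetilde M_k=M_kM_{k-1}^{--}=M_{k-1}M_k^{--}$, and, for the indices $n$ in the interval that are not on the list, an explicitly exhibited second occurrence giving $r(n,\x)\le 2n$, which is then propagated across the subinterval by the monotonicity $r(n+1,\x)\ge r(n,\x)+1$. That half of your plan is sound and is exactly what the paper does.

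The gap is in the other half, namely the proof that the listed indices do lie in $\Lambda(\x)$. You propose to verify directly that no factor of length $n$ occurs twice in $x_1\cdots x_{2n}$, and you justify this by appealing to the uniqueness of the right-special factor of each length. That justification does not deliver the conclusion: uniqueness of the right-special factor is a statement about which words are right-special, not about how often they occur; the unique right-special factor of length $n$ occurs infinitely often in $\x$ (with bounded gaps), so ``two copies of a right-special factor'' is not a contradiction, and in any case a repetition inside $x_1\cdots x_{2n}$ need not involve a right-special factor at all. What is needed is an occurrence-level statement, and the paper obtains it from two specific tools of \cite{BK19}: Lemma 7.1 (primitivity of $M_kM_{k-1}$ forces the first $q_k$, resp.\ $u_{1,k}+1$, factors of length $u_{1,k}$ to be mutually distinct), which anchors the value of $r(u_{1,k},\x)$ at the start of each level; and the combination of Lemma 5.3 with Theorem 2.4, by which a single local check --- the period-$p$ repetition realizing $r(n-1,\x)=n-1+p$ cannot be extended by one letter, so $r(n,\x)\ge r(n-1,\x)+2$ --- forces $r(n,\x)=2n+1$. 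This reduces the global non-repetition claim in the window of length $2n$ to one letter comparison per listed index. Without this mechanism (or an equivalent occurrence-level argument), the direct verification you describe at the listed indices is not actually carried out, and that is precisely the step on which the whole lemma rests.
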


\begin{proof}
For $\x=x_{1}x_{2}\dots$, let $x_{i}^{j}:=x_{i}x_{i+1}\dots x_{j}.$

(1) Suppose that $\x\in\C_{k}^{\I}\cap\C_{k+1}^{\I}$.
Since $M_{k}M_{k-1}$ is primitive, Lemma 7.1 in \cite{BK19} implies that for
\[\x=W_{k}{M_{k}}\widetilde{M}_{k}\dots=W_{k}M_{k}M_{k-1}{M_{k}}^{--}=W_{k}M_{k-1}{M_{k}}^{--}D_{k}{M_{k-1}}^{--}\dots,\]
the first $q_{k}$ subwords of length $\FPG{k}$ are mutually distinct. 
From $x_{1}^{\FPG{k}}=x_{q_{k}+1}^{\FPG[2]{k}}$,
$r(\FPG{k},\x)=\FPG[2]{k}.$
Note that
\begin{align*}
    \x&=W_{k+1}M_{k+1}\widetilde{M}_{k+1}\dots
    =W_{k}{M_{k}}^{t}M_{k-1}{M_{k}}^{a_{k+1}}M_{k-1}\widetilde{M}_{k+1}\dots\\
    &=W_{k}{M_{k}}^{t+1}{M_{k-1}}^{--}D'_{k}{M_{k}}^{a_{k+1}-1}M_{k-1}\widetilde{M}_{k+1}\dots.
\end{align*}
Since $x_{1}^{\FPGW[t]{k}-1}=x_{q_{k}+1}^{\FPGW[t+1]{k}-1}$, 
$r(\FPGW[t]{k}-1,\x)\le\FPGW[t]{k}-1.$
The fact that $r(n+1,\x)\ge r(n,\x)+1$ for any $n\ge1$ gives 
\[r(n,\x)=n+q_{k}\] 
for $\FPG{k}\le n \le \FPGW[t]{k}-1.$
Moreover, we have
$r(\FPGW[t]{k},\x)\ge r(\FPGW[t]{k}-1,\x)+2.$
Hence,
$r(\FPGW[t]{k},\x)=2\FPGW[t]{k}+1$
by Theorem 2.4 and Lemma 5.3 in \cite{BK19}.
Since $x_{1}^{\FPG[t+1]{k}-1}=x_{\FPGW[t]{k}+2}^{\FPGW[2t+1]{k}+q_{k-1}-1}$, we have
$r(\FPG[t+1]{k}-1,\x)\le \FPGW[2t+1]{k}+q_{k-1}-1.$
The fact that $r(n+1,\x)\ge r(n,\x)+1$ for any $n\ge1$ gives
\[r(n,\x)=n+\FPGW[t]{k}+1\] 
for $\FPGW[t]{k}\le n \le \FPG[t+1]{k}-1.$
Note that
\begin{align*}
    \x&=W_{k+1}M_{k+1}\widetilde{M}_{k+1}\dots
    =W_{k}{M_{k}}^{t}M_{k-1}{M_{k}}^{a_{k+1}}M_{k-1}\widetilde{M}_{k+1}\dots\\
    &=W_{k}{M_{k}}^{t+1}{M_{k-1}}^{--}D'_{k}{M_{k}}^{a_{k+1}-1}M_{k-1}\widetilde{M}_{k+1}\dots
    =W_{k}{M_{k}}^{t}M_{k-1}{M_{k}}^{t+1}M_{k-1}\dots.
\end{align*}
It gives
$r(\FPG[t+1]{k},\x)\ge r(\FPG[t+1]{k}-1,\x)+2.$
Thus, we have 
$r(\FPG[t+1]{k},\x)=2\FPG[t+1]{k}+1$
by Theorem 2.4 and Lemma 5.3 in \cite{BK19}.
If $t=a_{k+1}-1$, then $(t+1)q_{k}+q_{k-1}=q_{k+1}.$
By the argument used at level $k$,
$r(\FPG{k+1}-1,\x)=\FPG[2]{k+1}-1.$
The fact that $r(n+1,\x)\ge r(n,\x)+1$ for any $n\ge1$ gives
\[r(n,\x)=n+\FP[k]+1\] for $\FP[k]\le n \le \FPG{k+1}-1.$ It follows that 
\[
\Lambda(\x) \cap [\FPG{k}, \FPG{k+1}-1] = \{ \FPGW[t]{k}, \FP[k]\}.
\]
Now, let $t<a_{k+1}-1.$
Note that
\begin{align*}
    \x&=W_{k+1}M_{k+1}\widetilde{M}_{k+1}\dots
    =W_{k}{M_{k}}^{t}M_{k-1}{M_{k}}^{a_{k+1}}{M_{k-1}}M_{k}{M_{k+1}}^{--}\dots\\
    &=W_{k}{M_{k}}^{t}M_{k-1}M_{k}{M_{k}}^{a_{k+1}-2}M_{k}{M_{k-1}}M_{k}{M_{k+1}}^{--}\dots\\
    &=W_{k}{M_{k}}^{t+1}{M_{k-1}}^{--}D_{k}{M_{k}}^{a_{k+1}-1}M_{k}{M_{k-1}}^{--}D'_{k}{M_{k+1}}^{--}\dots.
\end{align*}
Since $x_{1}^{\FPGW[t+1]{k}-1}=x_{\FPG[t+1]{k}+2}^{|W_{k}|+tq_{k}+q_{k-1}+((t+2)q_{k}+q_{k-1}-2)}$,
we have 
$r(\FPGW[t+1]{k}-1,\x)\le \FPGW[2t+2]{k}+q_{k-1}-1.$
The fact that $r(n+1,\x)\ge r(n,\x)+1$ for any $n\ge1$ gives
\[r(n,\x)=n+\FPG[t+1]{k}+1\] for $\FPG[t+1]{k}\le n \le \FPGW[t+1]{k}-1.$
Note that
\begin{align*}
    \x&=W_{k+1}M_{k+1}\widetilde{M}_{k+1}\dots
    =W_{k}{M_{k}}^{t}M_{k-1}{M_{k}}^{a_{k+1}}{M_{k-1}}M_{k}{M_{k+1}}^{--}\dots\\
    &=W_{k}{M_{k}}^{t+1}{M_{k-1}}^{--}D'_{k}{M_{k}}^{a_{k+1}-1}M_{k}{M_{k-1}}^{--}D'_{k}{M_{k+1}}^{--}\dots\\
    &=W_{k}{M_{k}}^{t}M_{k-1}{M_{k}}^{t+2}{M_{k-1}}\dots.
\end{align*}
It gives
$r(\FPGW[t+1]{k},\x)\ge r(\FPGW[t+1]{k}-1,\x)+2.$
Hence, we have 
$r(\FPGW[t+1]{k},\x)=2\FPGW[t+1]{k}+1$ from Theorem 2.4 and Lemma 5.3 in \cite{BK19}.
On the other hand, from
\begin{align*}
    \x&=W_{k+1}M_{k+1}\widetilde{M}_{k+1}\dots
    =W_{k}{M_{k}}^{t}M_{k-1}M_{k}{M_{k}}^{a_{k+1}-1}{M_{k-1}}M_{k}{M_{k+1}}^{--}\dots\\
    &=W_{k}{M_{k}}^{t}M_{k-1}M_{k}{M_{k}}^{a_{k+1}}{M_{k-1}}^{--}D'_{k}{M_{k+1}}^{--}\dots,
\end{align*}
we have $x_{|W_{k}|+tq_{k}+q_{k-1}+1}^{|W_{k}|+tq_{k}+q_{k-1}+(a_{k+1}q_{k}+q_{k-1}-2)}=x_{|W_{k}|+(t+1)q_{k}+q_{k-1}+1}^{|W_{k}|+(t+1)q_{k}+q_{k-1}+(a_{k+1}q_{k}+q_{k-1}-2)}$. It gives
$r(\FP[k]-1,\x)\le (\FPGW[t+1]{k}+1)+(\FP[k]-1).$
The fact that $r(n+1,\x)\ge r(n,\x)+1$ for any $n\ge1$ gives
\[r(n,\x)=n+\FPGW[t+1]{k}+1\] for $\FPGW[t+1]{k}\le n \le \FP[k]-1.$
Moreover,
$r(\FP[k],\x)\ge r(\FP[k]-1,\x)+2.$
From Theorem 2.4 and Lemma 5.3 in \cite{BK19},
$r(\FP[k],\x)=2\FP[k]+1.$
Note that $\x\in\C_{k+1}^{\I}$.
By the argument used at level $k$,
$r(\FPG{k+1}-1,\x)=\FPG[2]{k+1}-1.$
The fact that $r(n+1,\x)\ge r(n,\x)+1$ for any $n\ge1$ gives
\[r(n,\x)=n+\FP[k]+1\] for $\FP[k]\le n \le \FPG{k+1}-1.$ 
It follows that 
\[ 
\Lambda(\x) \cap [\FPG{k}, \FPG{k+1}-1] = 
    \left\{
    \FPGW[t]{k}, \FPG[t+1]{k},
    \FPGW[t+1]{k}, \FP[k]
    \right\}.
\]

(2) Suppose that $\x\in\C_{k}^{\I}\cap\C_{k+1}^{\II}$.
Note that
\begin{align*}
    \x=W_{k+1}M_{k}\dots
    =W_{k}{M_{k}}^{t}M_{k-1}M_{k}\dots
    =W_{k}{M_{k}}^{t+1}{M_{k-1}}^{--}D'_{k}\dots.
\end{align*}
Use the argument used at level $k$ in (1). Since $x_{1}^{\FPGW[t]{k}-1}=x_{q_{k}+1}^{\FPGW[t+1]{k}-1}$,
\[r(n,\x)=n+q_{k}\] for $\FPG{k}\le n \le \FPGW[t]{k}-1.$
Note that
\begin{align*}
    \x&=W_{k+1}M_{k}M_{k+1}\widetilde{M}_{k+1}\dots
    =W_{k}{M_{k}}^{t}M_{k-1}M_{k}{M_{k}}^{a_{k+1}}M_{k-1}\widetilde{M}_{k+1}\dots\\
    &=W_{k}{M_{k}}^{t+1}{M_{k-1}}^{--}D'_{k}{M_{k}}^{a_{k+1}}M_{k-1}\widetilde{M}_{k+1}\dots
    =W_{k}{M_{k}}^{t}M_{k-1}{M_{k}}^{t+1}M_{k-1}\dots.
\end{align*}
Since $x_{|W_{k}|+1}^{|W_{k}|+\FPG[t+1]{k}-1}=x_{\FPGW[t]{k}+2}^{\FPGW[t]{k}+1+\FPG[t+1]{k}-1}$,
$r(\FPG[t+1]{k}-1,\x)\le \FPGW[2t+1]{k}+q_{k-1}-1.$
Moreover,
$r(\FPGW[t]{k},\x)\ge r(\FPGW[t]{k}-1,\x)+2.$
From Theorem 2.4 and Lemma 5.3 in \cite{BK19},
$r(\FPGW[t]{k},\x)=2\FPGW[t]{k}+1.$
The fact that $r(n+1,\x)\ge r(n,\x)+1$ for any $n\ge1$ gives
\[r(n,\x)=n+\FPGW[t]{k}+1\] 
for $\FPGW[t]{k}\le n \le \FPG[t+1]{k}-1.$
Moreover,
$r(\FPG[t+1]{k},\x)\ge r(\FPG[t+1]{k}-1,\x)+2.$
From Theorem 2.4 and Lemma 5.3 in \cite{BK19},
$r(\FPG[t+1]{k},\x)=2\FPG[t+1]{k}+1.$
On the other hand, note that
\begin{align*}
    \x&=W_{k+1}M_{k}M_{k+1}\widetilde{M}_{k+1}\dots
    =W_{k}{M_{k}}^{t}M_{k-1}M_{k}{M_{k}}^{a_{k+1}}M_{k-1}M_{k}\dots\\
    &=W_{k}{M_{k}}^{t}{M_{k-1}}M_{k}{M_{k}}^{a_{k+1}+1}{M_{k-1}}^{--}\dots
    =W_{k}{M_{k}}^{t+1}{M_{k-1}}^{--}D'_{k}{M_{k}}^{a_{k+1}+1}{M_{k-1}}^{--}\dots.
\end{align*}
Since $x_{1}^{\FPGW[t+1]{k}-1}=x_{\FPG[t+1]{k}+2}^{\FPG[t+1]{k}+1+\FPGW[t+1]{k}-1}$,
$r(\FPGW[t+1]{k}-1,\x)\le \FPGW[2t+2]{k}+q_{k-1}-1.$
The fact that $r(n+1,\x)\ge r(n,\x)+1$ for any $n\ge1$ gives
\[r(n,\x)=n+\FPG[t+1]{k}+1\] 
for $\FPG[t+1]{k}\le n \le \FPGW[t+1]{k}-1.$
Moreover,
$r(\FPGW[t+1]{k},\x)\ge r(\FPGW[t+1]{k}-1,\x)+2.$
From Theorem 2.4 and Lemma 5.3 in \cite{BK19},
$r(\FPGW[t+1]{k},\x)=2\FPGW[t+1]{k}+1.$
On the other hand, note that
\begin{align*}
    \x&=W_{k+1}M_{k}M_{k+1}\widetilde{M}_{k+1}\dots
    =W_{k}{M_{k}}^{t}M_{k-1}M_{k}{M_{k}}^{a_{k+1}}M_{k-1}M_{k}\dots\\
    &=W_{k}{M_{k}}^{t}M_{k-1}M_{k}{M_{k}}^{a_{k+1}+1}{M_{k-1}}^{--}D'_{k}\dots.
\end{align*}
Since $x_{\FPGW[t]{k}+2}^{\FPGW[t]{k}+1+\FPG[a_{k+1}+1]{k}-1}=x_{\FPGW[t+1]{k}+2}^{\FPGW[t+1]{k}+1+\FPG[a_{k+1}+1]{k}-1}$,
$r(\FPG{k+1}-1,\x)\le \FPG{k+1}-1+\FPGW[t+1]{k}+1.$
The fact that $r(n+1,\x)\ge r(n,\x)+1$ for any $n\ge1$ gives
\[r(n,\x)=n+\FPGW[t+1]{k}+1\] 
for $\FPGW[t+1]{k}\le n \le \FPG{k+1}-1.$
Hence, 
\[\Lambda(\x) \cap [\FPG{k}, \FPG{k+1}-1]=\{\FPGW[t]{k}, \FPG[t+1]{k}, \FPGW[t+1]{k}\}.
\]

(3) Suppose that $\x\in\C_{k}^{\I}\cap\C_{k+1}^{\III}\cap\C_{k+2}^{\I}$.
Since $\x\in\C_{k}^{\I}$, the argument used at level $k$ in (1) yields
$r(\FPG{k},\x)=\FPG[2]{k}.$
Note that
\begin{align*}
    \x
    =W_{k+1}M_{k+1}\widetilde{M}_{k+1}\dots 
    = W_{k}{M_{k}}^{a_{k+1}}M_{k-1}M_{k}{M_{k+1}}^{--}\dots
    = W_{k}{M_{k}}^{a_{k+1}}\widetilde{M}_{k}D'_{k}{M_{k+1}}^{--}\dots.
\end{align*}
Since $x_{1}^{\FPGW[a_{k+1}]{k}-1}=x_{q_{k}+1}^{\FPGW[a_{k+1}]{k}+q_{k}-1}$,
$r(\FPW[k]-1,\x)\le \FPW[k]+q_{k}-1.$
The fact that $r(n+1,\x)\ge r(n,\x)+1$ for any $n\ge1$ gives
\[r(n,\x)=n+q_{k}\] 
for $\FPG{k}\le n \le \FPW[k]-1.$
Moreover,
$r(\FPW[k],\x)\ge r(\FPW[k]-1,\x)+2.$
From Theorem 2.4 and Lemma 5.3 in \cite{BK19},
$r(\FPW[k],\x)=2\FPW[k]+1.$
Note that
\begin{align*}
    \x
    &=W_{k+2}M_{k+2}\widetilde{M}_{k+2}\dots
    = W_{k}M_{k+1}{M_{k+1}}^{a_{k+2}-1}M_{k}\widetilde{M}_{k+2}\dots \\
    &= W_{k}M_{k+1}{M_{k+1}}^{a_{k+2}}{M_{k}}^{--}D'_{k+1}{M_{k+2}}^{--}\dots.
\end{align*}
Since $x_{|W_{k}|+1}^{|W_{k}|+\FPG[a_{k+2}]{k+1}-1}=x_{|W_{k}|+q_{k+1}+1}^{|W_{k}|+\FPG[a_{k+2}+1]{k+1}-1}$,
$r(\FP[k+1]-1,\x)\le|W_{k}|+q_{k+1}+\FP[k+1]-1.$
The fact that $r(n+1,\x)\ge r(n,\x)+1$ for any $n\ge1$ gives
\[r(n,\x)=n+\FPW[k]+1\] 
for $\FPW[k]\le n \le \FP[k+1]-1.$
Moreover,
$r(\FP[k+1],\x)\ge r(\FP[k+1]-1,\x)+2.$
From Theorem 2.4 and Lemma 5.3 in \cite{BK19}, 
$r(\FP[k+1],\x)=2\FP[k+1]+1.$
Since $\x\in\C_{k+2}^{\I}$, the argument used at level $k$ in (1) implies
$r(\FPG{k+2},\x)=\FPG[2]{k+2}.$
The fact that $r(n+1,\x)\ge r(n,\x)+1$ for any $n\ge1$ gives
\[r(n,\x)=n+\FP[k+1]+1\] 
for $\FP[k+1]\le n \le \FPG{k+2}.$
Hence,
\[\Lambda(\x) \cap [\FPG{k}, \FPG{k+2}-1]=\{\FPW[k], \FP[k+1]\}.
\]

(4) Suppose that $\x\in\C_{k}^{\I}\cap\C_{k+1}^{\III}\cap\C_{k+2}^{\II}$.
Use the same argument with (3). 
Since $\x\in\C_{k}^{\I}\cap\C_{k+1}^{\III}$, we have
\[r(n,\x)=n+q_{k}\]
for $\FPG{k}\le n \le \FPW[k]-1$
and
$r(\FPW[k],\x)=2\FPW[k]+1.$
On the other hand, note that
\begin{align*}
    \x
    &=W_{k+2}M_{k+1}M_{k+2}\widetilde{M}_{k+2}\dots
    = W_{k}M_{k+1}{M_{k+1}}^{a_{k+2}}M_{k}\widetilde{M}_{k+2}\dots\\
    &= W_{k}M_{k+1}{M_{k+1}}^{a_{k+2}+1}{M_{k}}^{--}D'_{k+1}{M_{k+2}}^{--}\dots.
\end{align*}
Since $x_{|W_{k}|+1}^{|W_{k}|+\FPG[a_{k+2}+1]{k+1}-1}=x_{|W_{k}|+q_{k+1}+1}^{|W_{k}|+q_{k+1}+\FPG[a_{k+2}+1]{k+1}-1}$,
$r(\FPG{k+2}-1,\x)\le|W_{k}|+q_{k+1}+\FPG{k+2}-1.$
The fact that $r(n+1,\x)\ge r(n,\x)+1$ for any $n\ge1$ gives 
\[r(n,\x)=n+\FPW[k]+1\] 
for $\FPW[k]\le n \le \FPG{k+2}-1.$
Hence,
\[\Lambda(\x) \cap [\FPG{k}, \FPG{k+2}-1]=\{\FPW[k]\}.
\]

(5) Suppose that $\x\in\C_{k}^{\II}\cap\C_{k+1}^{\I}$.
Note that
\begin{align*}
    \x
    &=W_{k}M_{k-1}M_{k}\widetilde{M}_{k}\dots
    = W_{k}M_{k-1}M_{k}M_{k-1}{M_{k}}^{--}\dots.
\end{align*}
Since $M_{k}M_{k-1}$ is primitive, Lemma 7.1 in \cite{BK19} implies that the first $(\FPG{k}+1)$ subwords of length $\FPG{k}$ are mutually distinct. Thus,
$r(\FPG{k},\x)=2\FPG{k}+1.$
Since $x_{1}^{\FPGW{k}-1}=x_{q_{k}+q_{k-1}+1}^{q_{k}+q_{k-1}+\FPGW{k}-1}$,
$r(\FPGW{k}-1,\x)\le q_{k}+q_{k-1}+\FPGW{k}-1.$
The fact that $r(n+1,\x)\ge r(n,\x)+1$ for any $n\ge1$ gives
\[r(n,\x)=n+\FPG{k}+1\] 
for $\FPG{k}\le n \le \FPGW{k}-1.$
Note that
\begin{align*}
    \x
    &=W_{k+1}M_{k+1}\widetilde{M}_{k+1}\dots
    = W_{k}M_{k-1}{M_{k}}^{a_{k+1}}M_{k-1}M_{k}{M_{k+1}}^{--}\dots.
\end{align*}
If $a_{k+1}=1$,
then $x_{1}^{\FPGW[2]{k}+q_{k-1}-1}=x_{q_{k}+q_{k-1}+1}^{q_{k}+q_{k-1}+\FPGW[2]{k}+q_{k-1}-1}$.
It implies
$r(\FPGW[2]{k}+q_{k-1}-1,\x)\le \FPGW[3]{k}+2q_{k-1}-1.$
Since $r(n+1,\x)\ge r(n,\x)+1$ for any $n\ge1$,
\[r(n,\x)=n+\FPG{k}+1\] 
for $\FPG{k}\le n \le \FPW[k]+q_{k+1}-1.$
Hence,
\[ \Lambda(\x) \cap [\FPG{k}, \FPG{k+1}-1]=\{\FPG{k}\}.
\]
Now, let $a_{k+1}>1.$
Since $\x=W_{k}M_{k-1}M_{k}M_{k}M_{k-1}\dots,$ 
$r(\FPGW{k}\x)\ge r(\FPGW{k}-1,\x)+2.$
From Theorem 2.4 and Lemma 5.3 in \cite{BK19},
$r(\FPGW{k},\x)= 2\FPGW{k}+1.$
On the other hand, note that
\begin{align*}
    \x
    &=W_{k+1}M_{k+1}\widetilde{M}_{k+1}\dots
    = W_{k}M_{k-1}{M_{k}}^{a_{k+1}}M_{k-1}\widetilde{M}_{k+1}^{--}\dots \\
    &= W_{k}M_{k-1}M_{k}{M_{k}}^{a_{k+1}-1}M_{k-1}\widetilde{M}_{k+1}^{--}\dots
    = W_{k}M_{k-1}M_{k}{M_{k}}^{a_{k+1}}M_{k-1}^{--}D'_{k}M_{k+1}^{--}\dots.
\end{align*}
Since $x_{|W_{k}|+q_{k-1}+1}^{|W_{k}|+q_{k-1}+\FPG[a_{k+1}]{k}-1}=x_{|W_{k}|+q_{k}+q_{k-1}+1}^{|W_{k}|+q_{k}+q_{k-1}+\FPG[a_{k+1}]{k}-1}$,
$r(\FP[k]-1,\x)\le|W_{k}|+q_{k}+q_{k-1}+\FP[k]-1.$
The fact that $r(n+1,\x)\ge r(n,\x)+1$ for any $n\ge1$ gives 
\[r(n,\x)=n+\FPGW{k}+1\] 
for $\FPGW{k}\le n \le \FP[k]-1.$
Moreover,
$r(\FP[k],\x)\ge r(\FP[k]-1,\x)+2.$
From Theorem 2.4 and Lemma 5.3 in \cite{BK19}, 
$r(\FP[k],\x)= 2\FP[k]+1.$
Since $\x\in\C_{k+1}^{\I}$, the argument used at level $k$ in (1) implies
$r(\FPG{k+1},\x)=\FPG[2]{k+1}.$
The fact that $r(n+1,\x)\ge r(n,\x)+1$ for any $n\ge1$ gives
\[r(n,\x)=n+\FP[k]+1\] 
for $\FP[k]\le n \le \FPG{k+1}.$
Hence,
\[\Lambda(\x) \cap [\FPG{k}, \FPG{k+1}-1]=\{\FPG{k}, \FPGW{k}, \FP[k]\}.
\]
(6) Suppose that $\x\in\C_{k}^{\II}\cap\C_{k+1}^{\II}$.
Use the same argument with (5). 
Since $\x\in\C_{k}^{\II}$, we have
\[r(n,\x)=n+\FPG{k}+1\] 
for $\FPG{k}\le n \le \FPGW{k}-1.$
Note that
\begin{align*}
    \x
    &=W_{k+1}M_{k}M_{k+1}\widetilde{M}_{k+1}\dots
    = W_{k}M_{k-1}M_{k}{M_{k}}^{a_{k+1}}M_{k-1}M_{k}{M_{k+1}}^{--}\dots \\
    &= W_{k}M_{k-1}M_{k}{M_{k}}^{a_{k+1}+1}{M_{k-1}}^{--}D'_{k}{M_{k+1}}^{--}\dots.
\end{align*}
Since $\x=W_{k}M_{k-1}M_{k}M_{k}M_{k-1}\dots,$
$r(\FPGW{k},\x)\ge r(\FPGW{k}-1,\x)+2.$
From Theorem 2.4 and Lemma 5.3 in \cite{BK19}, 
$r(\FPGW{k},\x)= 2\FPGW{k}+1.$
Moreover, since $x_{|W_{k}|+q_{k-1}+1}^{|W_{k}|+q_{k-1}+\FPG[a_{k+1}+1]{k}-1}=x_{\FPGW{k}+2}^{\FPGW{k}+1+\FPG[a_{k+1}+1]{k}-1}$,
$r(\FPG{k+1}-1,\x)\le|W_{k}|+q_{k}+q_{k-1}+\FPG{k+1}-1.$
The fact that $r(n+1,\x)\ge r(n,\x)+1$ for any $n\ge1$ gives
\[r(n,\x)=n+\FPGW{k}+1\] 
for $\FPGW{k}\le n \le \FPG{k+1}-1.$
Hence,
\[\Lambda(\x) \cap [\FPG{k}, \FPG{k+1}-1]=\{\FPG{k}, \FPGW{k}\}.
\]
\end{proof}

\begin{rmk}
    When $\x\in\C_{1}^{\III}$, Lemma \ref{PositionToComputeGeneralcase} does not determine the elements in $\Lambda(\x)\cap[\FPG{1},\FPG{2}-1]$.
    Thus, we should check how the elements of $\Lambda(\x)\cap[\FPG{1},\FPG{2}-1]$ are expressed in terms of $q_{k}$'s and $|W_{k}|$'s.
    If $\x\in\C_{1}^{\III}$, then $\x$ starts with $0^{a_{1}}10^{a_{1}-1}$.
    Thus, $r(n,\x)=n+1$ for $1\le n\le \FPW[0]-1$.
    Moreover, since $\x\in\C_{2}^{\I}\cup\C_{2}^{\II}$, $\x$ starts with $0^{a_{1}}10^{a_{1}}$ or $0^{a_{1}}10^{a_{1}-1}1$.
    Thus, $r(\FPW[0],\x)=2\FPW[0]+1$.
    Hence, we can follow the proof of Lemma \ref{PositionToComputeGeneralcase} (3) and (4).
    We have $\Lambda(\x)\cap[\FPG{1},\FPG{2}-1]=\{\FPW[0],\FP[1]\}$ for $\x\in\C_{2}^{\I}$ and $\Lambda(\x)\cap[\FPG{1},\FPG{2}-1]=\{\FPW[0]\}$ for $\x\in\C_{2}^{\II}$.
\end{rmk}

For $l=1,2,\dots,6$, define 
\[\Lambda_{l}(\x):=\{n\in\Lambda(\x): \textrm{$n$ appears in ($l$) of Lemma \ref{PositionToComputeGeneralcase}}\}.
\]
It is obvious that $\Lambda_{l}(\x)$'s are mutually distinct and $\cup_{l=1}^{6}{\Lambda_{l}(\x)}=\Lambda(\x)\cap[\FPG{1},\infty).$
Now, we find the minimum of $\LLL(\theta)$ where $\theta$ has bounded partial quotients.

\begin{thm}\label{Proof_TheMinOfSpec}
Let $\theta=[0;a_{1},a_{2},\dots]$ have bounded partial quotients.
We have
$$
\min\LLL(\theta) = \lf_{k\to\infty} \, [1; 1+ a_k, a_{k-1}, a_{k-2}, \dots, a_1].
$$
\end{thm}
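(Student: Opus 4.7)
The plan is to use Lemma \ref{PositionToComputeGeneralcase} to enumerate all consecutive pairs in $\Lambda(\x) = \{n_1 < n_2 < \cdots\}$ and to identify the configuration that minimises $1 + n_i/n_{i+1}$. A preliminary rewriting using the identity $[a_k; a_{k-1}, \dots, a_1] = q_k/q_{k-1}$ recasts the target as
\[
[1; 1+a_k, a_{k-1}, \dots, a_1] \;=\; 1 + \frac{q_{k-1}}{q_k + q_{k-1}},
\]
so that the theorem becomes $\min \LLL(\theta) = \varliminf_{k\to\infty}\bigl(1 + q_{k-1}/(q_k+q_{k-1})\bigr)$. This reformulation reveals that the critical gap in $\Lambda(\x)$ must have $n_i \sim q_{k-1}$ and $n_{i+1} \sim q_k + q_{k-1}$, which is precisely the shape produced by case (4) of Lemma \ref{PositionToComputeGeneralcase}.

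For the lower bound, I fix an arbitrary Sturmian word $\x$ of slope $\theta$ and for each consecutive pair $(n_i, n_{i+1})$ in $\Lambda(\x)$ apply the appropriate one of the six cases of Lemma \ref{PositionToComputeGeneralcase} to express $1 + n_i/n_{i+1}$ in terms of $q_k$ and $|W_k|$, then minimise over $|W_k| \in \{1, \dots, q_k\}$. Case (4) is singled out as the only configuration whose gap spans from $v'_k$ all the way to $u_{1,k+2} = q_{k+2}+q_{k+1}-1$, and with $|W_k|$ minimal it yields approximately $1 + q_{k+1}/(q_{k+2}+q_{k+1})$, matching the target after an index shift. For each of the remaining five configurations, a routine calculation shows that the minimum ratio is either strictly larger than, or asymptotically equivalent to, the case-(4) value; the strict inequalities are quantitative because the bounded-partial-quotient hypothesis keeps $q_k/q_{k-1}$ in a compact set bounded away from $1$. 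Taking $\liminf_i$ then gives $\rep(\x) \ge \varliminf_{k\to\infty}[1; 1+a_k, a_{k-1}, \dots, a_1]$.

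For the matching upper bound, I exhibit a Sturmian word $\x^*$ realising the infimum. By Lemma \ref{LocatingChainRule}, the transitions \I$\to$\III{} and \III$\to$\II{} leave $W_k$ unchanged, while the transition \II$\to$\I{} adds only $q_{k-1}$ to $|W_k|$. Choosing $W_1$ to consist of a single letter and iterating the locating chain $\I\III\II\I\III\II\cdots$ (which is admissible by Lemma \ref{LocatingChainRule}) yields a Sturmian word in which case (4) occurs at infinitely many levels $k$ with $|W_k|$ controlled along that subsequence. The case-(4) gaps at those levels provide ratios that converge along a subsequence to $\varliminf_{k\to\infty}[1; 1+a_k, a_{k-1}, \dots, a_1]$, forcing $\rep(\x^*)$ to equal this value.

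The principal obstacle is the case analysis underlying the lower bound: one must verify, across all six configurations of Lemma \ref{PositionToComputeGeneralcase} and all admissible $|W_k|$, that no ratio is asymptotically smaller than the one from case (4). The configurations (1) and (2) with intermediate $t$ require careful balancing between $|W_k|$, $t$, and $a_{k+1}$, and the bounded-partial-quotient hypothesis is essential to keep every estimate quantitative. A secondary subtlety is in the construction of $\x^*$: one must control the growth of $|W_k|$ along the chosen subsequence of case-(4) levels so that the liminf is actually attained, which is handled by choosing the chain to revert to case (4) periodically and by selecting $W_1$ of minimal length.
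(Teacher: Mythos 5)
Your lower bound takes the same route as the paper: enumerate the consecutive pairs of $\Lambda(\x)$ through the six cases of Lemma \ref{PositionToComputeGeneralcase}, normalize by $q_k$, and verify that every configuration gives a ratio whose liminf is at least $\varliminf_k[0;1+a_k,a_{k-1},\dots,a_1]$. The reformulation $[1;1+a_k,\dots,a_1]=1+q_{k-1}/(q_k+q_{k-1})$ and the identification of the critical pair $(v'_k,\,u_{1,k+2})=(|W_k|+q_{k+1}-1,\;q_{k+2}+q_{k+1}-1)$ arising from case (4) are both correct, although case (4) is not the \emph{unique} extremal configuration: the pair ending at $v'_k$ also attains the bound, in the opposite regime $|W_k|/q_k\to1$. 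Those are details; the outline of the lower bound is sound and matches the paper's.

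The upper-bound construction, however, does not work. In the periodic chain $\I\,\III\,\II\,\I\,\III\,\II\cdots$ the transition $\II\to\I$ occurs at every third level, and by Lemma \ref{LocatingChainRule}(3) each such transition appends $M_{k-1}$ to $W_k$; hence $|W_{3j+1}|=1+q_2+q_5+\cdots+q_{3j-1}\ge q_{3j-1}$, so $|W_k|/q_k$ is bounded \emph{below} by a positive constant --- here the bounded-partial-quotient hypothesis works against you, since $q_{k-2}/q_k$ does not tend to $0$. Consequently the case-(4) ratio
\[
\frac{v'_k}{u_{1,k+2}}=\frac{|W_k|+q_{k+1}-1}{q_{k+2}+q_{k+1}-1}
\]
stays bounded away from $q_{k+1}/(q_{k+2}+q_{k+1})$, and your word does not realize the infimum; choosing $W_1$ of length one does not help because the increments $q_{k-1}$, accumulated at a positive density of levels, already force $|W_k|\asymp q_k$. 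The failure is visible for $\theta=\varphi$: your chain is $\overline{ab}$ in the notation of Section 4, and by Theorem \ref{Proof_TheRightGap} such a word has $\rep(\x)=\mu_{2}=1+2\varphi^{3}\approx1.472$, not $\mu_{\min}=1+\varphi^{2}\approx1.382$. The correct construction (the paper's) inserts \emph{increasingly long} runs of $\I\,\III$ between consecutive occurrences of $\II$, with the $\II$-levels $k_j$ chosen along a subsequence where $q_{k_j-1}/q_{k_j}$ tends to $\varliminf_k q_{k-1}/q_k$ and $k_{j+1}-k_j\to\infty$; then $|W_{k_j}|\le 1+q_{k_1-1}+\cdots+q_{k_{j-1}-1}=o(q_{k_j})$ and the case-(4) ratios along this subsequence do converge to the target. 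This is consistent with Theorem \ref{Proof_TheMaxMinForGoldenRatio}: for slope $\varphi$ the minimum is attained only by words whose locating chains contain arbitrarily long chains, never by a locating chain that is eventually periodic with bounded period.
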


\begin{proof}
Let $\x$ be a Sturmian word of slope $\theta$. 
For $k\ge1$, set $\eta_{k}:=\dfrac{q_{k-1}}{q_{k}}, t_k:=\frac{|W_{k}|}{q_{k}}, \epsilon_{k}:=\frac{1}{q_{k}}.$
Note that $t_{k}\le1$, $\lim\limits_{k\to\infty}{\epsilon_{k}}=0$, and $\eta_{k}\ge\epsilon_{k}$ for $k\ge1$.
Set $m_{\theta}:=\varliminf\limits_{i\to\infty}{a_{i}}, M_{\theta}:=\varlimsup\limits_{i\to\infty}{a_{i}}.$
Let $\liminf\limits_{k\to\infty}{\eta_{k}}=[0;b_{1},b_{2},\dots]$.

First, assume that $a_i>1$ for infinitely many $i$.
Since $\eta_{k}=[0;a_{k},a_{k-1},\dots,a_{1}]$ and $M_{\theta}\ge2$, we have $b_{1}=M_{\theta}$, $\liminf\limits_{k\to\infty}{\eta_{k}}<\frac{1}{2}$.
Using Lemma \ref{PositionToComputeGeneralcase}, let us prove that $\liminf\limits_{n_{i}\in\Lambda(\x)}{\frac{n_i}{n_{i+1}}}\ge \liminf\limits_{k\to\infty}{[0;1+a_{k+1},a_{k},\dots]}$ through the 7 cases below.
Note that $\liminf\limits_{k\to\infty}{\eta_{k}}>\dfrac{\liminf\limits_{k\to\infty}{\eta_{k}}}{1+\liminf\limits_{k\to\infty}{\eta_{k}}}=\liminf\limits_{k\to\infty}{\frac{\eta_{k}}{1+\eta_{k}}}=\liminf\limits_{k\to\infty}{[0;1+a_{k},a_{k-1},\dots]}$.

(1) For any $n_i\in\Lambda_{1}(\x)$ with $n_{i+1}\in\Lambda_{1}(\x)\cup\Lambda_{2}(\x)$, $\frac{n_i}{n_{i+1}}$ is
$$\frac{t_{k}+t+\eta_{k}-\epsilon_{k}}{t+1+\eta_{k}-\epsilon_{k}}
\textrm{, }\frac{t+1+\eta_{k}-\epsilon_{k}}{t+1+t_{k}+\eta_{k}-\epsilon_{k}}
\textrm{, }\frac{\eta_{k+1}(t+1+t_{k}+\eta_{k}-\epsilon_{k})}{1-\epsilon_{k+1}}
\textrm{, or }\frac{1-\epsilon_{k+1}}{t'+t_{k+1}+\eta_{k+1}-\epsilon_{k+1}}
$$
for some $k=k(i)$ and $t,t'$ satisfying $W_{k+1}=W_{k}M_{k}^{t}M_{k-1}, W_{k+2}=W_{k+1}M_{k+1}^{t'}M_{k}$.
We have
\[\liminf\limits_{k\to\infty}{\frac{t_{k}+t+\eta_{k}-\epsilon_{k}}{t+1+\eta_{k}-\epsilon_{k}}},\; \liminf\limits_{k\to\infty}{\frac{t+1+\eta_{k}-\epsilon_{k}}{t+1+t_{k}+\eta_{k}-\epsilon_{k}}}\ge\frac{t}{t+1}\ge \frac{1}{2}>\liminf\limits_{k\to\infty}{\eta_{k}},\]
\[\liminf\limits_{k\to\infty}{\frac{\eta_{k+1}}{1-\epsilon_{k+1}}(t+1+t_{k}+\eta_{k}-\epsilon_{k})}\ge \liminf\limits_{k\to\infty}{2\eta_{k+1}}\ge\liminf\limits_{k\to\infty}{\eta_{k}},\]
and
\[ \liminf\limits_{k\to\infty}{\frac{1-\epsilon_{k+1}}{t_{k+1}+t'+\eta_{k+1}-\epsilon_{k+1}}}=\liminf\limits_{k\to\infty}{\frac{1}{t_{k+1}+t'+\eta_{k+1}}} \ge \liminf\limits_{k\to\infty}{\frac{1}{a_{k+2}+\eta_{k+1}}}=\liminf\limits_{k\to\infty}{\eta_{k+2}}.
\]

(2) For any $n_i\in\Lambda_{2}(\x)$, $\frac{n_i}{n_{i+1}}$ is
\[\frac{t_{k}+t+\eta_{k}-\epsilon_{k}}{t+1+\eta_{k}-\epsilon_{k}}
\textrm{, }\frac{t+1+\eta_{k}-\epsilon_{k}}{t+1+t_{k}+\eta_{k}-\epsilon_{k}}
\textrm{, or }\frac{\eta_{k+1}(t+1+t_{k}+\eta_{k}-\epsilon_{k})}{1+\eta_{k+1}-\epsilon_{k+1}}\]
for some $k=k(i)$ and $t$ satisfying $W_{k+1}=W_{k}M_{k}^{t}M_{k-1}$.
From the previous case,  
\[\liminf\limits_{k\to\infty}{\frac{t_{k}+t+\eta_{k}-\epsilon_{k}}{t+1+\eta_{k}-\epsilon_{k}}},\; \liminf\limits_{k\to\infty}{\frac{t+1+\eta_{k}-\epsilon_{k}}{t+1+t_{k}+\eta_{k}-\epsilon_{k}}}\ge\frac{1}{2}>\liminf\limits_{k\to\infty}{\eta_{k}}.\]
We also have
\[\liminf\limits_{k\to\infty}{\frac{\eta_{k+1}}{1+\eta_{k+1}-\epsilon_{k+1}}(t+1+t_{k}+\eta_{k}-\epsilon_{k})}\ge\liminf\limits_{k\to\infty}{\frac{2\eta_{k+1}}{1+\eta_{k+1}}}\ge\liminf\limits_{k\to\infty}{\eta_{k}}.\]

(3) For any $n_i\in\Lambda_{5}(\x)$ with $n_{i+1}\in\Lambda_{1}(\x)\cup\Lambda_{2}(\x)\cup\Lambda_{5}(\x)$, $\frac{n_i}{n_{i+1}}$ is
\[\frac{1+\eta_{k}-\epsilon_{k}}{1+t_{k}+\eta_{k}-\epsilon_{k}}
\textrm{, }\frac{\eta_{k+1}}{1-\epsilon_{k+1}}(1+t_{k}+\eta_{k}-\epsilon_{k})
\textrm{, or }\frac{1-\epsilon_{k+1}}{t+t_{k+1}+\eta_{k+1}-\epsilon_{k+1}}
\]
for some $k=k(i)$ and $t$ satisfying $W_{k+2}=W_{k+1}M_{k+1}^{t}M_{k}$.
We have 
\[\liminf\limits_{k\to\infty}{\frac{1+\eta_{k}-\epsilon_{k}}{1+t_{k}+\eta_{k}-\epsilon_{k}}}\ge\liminf\limits_{k\to\infty}{\frac{1}{1+t_{k}}}\ge\frac{1}{2}>\liminf\limits_{k\to\infty}{\eta_{k}},\]
\[\liminf\limits_{k\to\infty}{\frac{\eta_{k+1}}{1-\epsilon_{k+1}}(1+t_{k}+\eta_{k}-\epsilon_{k})}\ge\liminf\limits_{k\to\infty}{\eta_{k+1}},\]
and
\[\liminf\limits_{k\to\infty}{\frac{1-\epsilon_{k+1}}{t_{k+1}+t+\eta_{k+1}-\epsilon_{k+1}}}=\liminf\limits_{k\to\infty}{\frac{1}{t_{k+1}+t+\eta_{k+1}}} \ge \liminf\limits_{k\to\infty}{\frac{1}{a_{k+2}+\eta_{k+1}}}=\liminf\limits_{k\to\infty}{\eta_{k+2}}.
\]

(4) For any $n_i\in\Lambda_{6}(\x)$, $\frac{n_i}{n_{i+1}}$ is
\[\frac{1+\eta_{k}-\epsilon_{k}}{1+t_{k}+\eta_{k}-\epsilon_{k}}
\textrm{ or }\frac{\eta_{k+1}}{1+\eta_{k+1}-\epsilon_{k+1}}(1+t_{k}+\eta_{k}-\epsilon_{k})
\]
for some $k=k(i)$.
From the previous case, 
\[\liminf\limits_{k\to\infty}{\frac{1+\eta_{k}-\epsilon_{k}}{1+t_{k}+\eta_{k}-\epsilon_{k}}}> \liminf\limits_{k\to\infty}{\eta_{k}}.\]
We also have
\begin{align*}
    &\liminf\limits_{k\to\infty}{\frac{\eta_{k+1}}{1+\eta_{k+1}-\epsilon_{k+1}}(1+t_{k}+\eta_{k}-\epsilon_{k})}\ge
    \liminf\limits_{k\to\infty}{\frac{\eta_{k+1}}{1+\eta_{k+1}}\liminf\limits_{k\to\infty}{(1+t_{k}+\eta_{k})}} \\
    \ge\;&\frac{\liminf\limits_{k\to\infty}{\eta_{k+1}}}{1+\liminf\limits_{k\to\infty}{\eta_{k+1}}}(1+\liminf\limits_{k\to\infty}{\eta_{k}})=
    \liminf\limits_{k\to\infty}{\eta_{k+1}}.
\end{align*}

(5) For any $n_i\in\Lambda_{3}(\x)$ with $n_{i+1}\in\Lambda_{1}(\x)\cup\Lambda_{2}(\x)\cup\Lambda_{3}(\x)$, $\frac{n_i}{n_{i+1}}$ is \[(1+t_{k}\eta_{k+1}-\epsilon_{k+1})\frac{\eta_{k+2}}{1-\epsilon_{k+2}}
\textrm{ or }\frac{1-\epsilon_{k+2}}{t+t_{k+2}+\eta_{k+2}-\epsilon_{k+2}}
\]
for some $k=k(i)$ and $t$ satisfying $W_{k+3}=W_{k+2}M_{k+2}^{t}M_{k+1}$.
We have
\[\liminf\limits_{k\to\infty}{\frac{(1+t_{k}\eta_{k+1}-\epsilon_{k+1})\eta_{k+2}}{1-\epsilon_{k+2}}}\ge\liminf\limits_{k\to\infty}{\frac{(1-\epsilon_{k+1})\eta_{k+2}}{1-\epsilon_{k+2}}}=\liminf\limits_{k\to\infty}{\eta_{k+2}}\]
and
\[ \liminf\limits_{k\to\infty}{\frac{1-\epsilon_{k+2}}{t_{k+2}+t+\eta_{k+2}-\epsilon_{k+2}}}=\liminf\limits_{k\to\infty}{\frac{1}{t_{k+2}+t+\eta_{k+2}}} \ge \liminf\limits_{k\to\infty}{\frac{1}{a_{k+3}+\eta_{k+2}}}=\liminf\limits_{k\to\infty}{\eta_{k+3}}.
\]

(6) For any $n_i\in\Lambda_{1}(\x)\cup\Lambda_{3}(\x)\cup\Lambda_{5}(\x)$ with $n_{i+1}\in\Lambda_{3}(\x)\cup\Lambda_{4}(\x)$, $\frac{n_i}{n_{i+1}}$ is 
\[\frac{\eta_{k+1}-\epsilon_{k+1}}{1+t_{k}\eta_{k+1}-\epsilon_{k+1}}\]
for some $k=k(i)$.
We have
\begin{align*}
    \liminf\limits_{k\to\infty}{\frac{\eta_{k+1}-\epsilon_{k+1}}{1+t_{k}\eta_{k+1}-\epsilon_{k+1}}} 
    \ge \liminf\limits_{k\to\infty}{\frac{\eta_{k+1}}{1+\eta_{k+1}}}
    =\liminf\limits_{k\to\infty}{[0;1+a_{k+1},a_{k},a_{k-1},\dots]}.
\end{align*}


(7) For any $n_i\in\Lambda_{4}(\x)$, $\frac{n_i}{n_{i+1}}$ is 
\[\frac{\eta_{k+2}}{1+\eta_{k+2}-\epsilon_{k+2}}(1+t_{k}\eta_{k+1}-\epsilon_{k+1})\]
for some $k=k(i)$.
We have
\begin{align*}
    \liminf\limits_{k\to\infty} \frac{\eta_{k+2}(1+t_{k}\eta_{k+1}-\epsilon_{k+1})}{1+\eta_{k+2}-\epsilon_{k+2}}   
    \ge \liminf\limits_{k\to\infty}{\frac{\eta_{k+2}}{1+\eta_{k+2}}}
    =\liminf\limits_{k\to\infty}{[0;1+a_{k+2},a_{k+1},a_{k},\dots]}.
\end{align*} 
Hence, from (1)-(7),
\[\rep(\x)\ge\liminf\limits_{k\to\infty}{[1;1+a_{k},a_{k-1},\dots]}\] 
where $a_i >1$ for infinitely many $i$.

Now, assume that there exists an integer $I>0$ such that $a_{i}=1$ for $i\ge I$.
The assumptions of Lemma \ref{LocatingChainRule} (1) and (2) are not satisfied for any level $k\ge I$. 
In other words, $\Lambda_{1}(\x)\cup\Lambda_{2}(\x)$ is finite.
Thus, it is sufficient to check (3)-(7).
Note that $\liminf\limits_{k\to\infty}{\eta_{k}}=[0;\overline{1}]=\varphi$.
Using $1\ge t_{k}$, $\lim\limits_{k\to\infty}{\epsilon_{k}}=0$,
\[\liminf\limits_{k\to\infty}{\frac{1+\eta_{k}-\epsilon_{k}}{1+t_{k}+\eta_{k}-\epsilon_{k}}}\ge\liminf\limits_{k\to\infty}{\frac{1+\eta_{k}}{2+\eta_{k}}}=\frac{1+\liminf\limits_{k\to\infty}{\eta_{k}}}{2+\liminf\limits_{k\to\infty}{\eta_{k}}}=\varphi=\liminf\limits_{k\to\infty}{\eta_{k}},
\]
(3)-(7) are similarly proved.
Hence, $\rep(\x)\ge[1;2,\overline{1}]$ for a Sturmian word $\x$ of slope $\varphi$.
Therefore, 
\[\rep(\x)\ge\liminf\limits_{k\to\infty}{[1;1+a_{k},a_{k-1},\dots]}\] 
for a Sturmian word $\x$ of slope $\theta$.

The equality holds in the following setting. Choose the sequence $\{k_{j}\}$ such that $\eta_{k_{j}}\to\liminf\limits_{k\to\infty}{\eta_{k}}$, $\lim\limits_{j\to\infty}{(k_{j+1}-k_{j})}=\infty$, and $k_{j+1}-k_{j}$ is odd for all $j$.
Let $\x\in\C_{k_{j}}^{\II}$ for all $k_{j}$ and $\x\in\C_{k_{j}+2l-1}^{\I}\cap\C_{k_{j}+2l}^{\III}$ for all $0<l\le\frac{k_{j+1}-k_{j}-1}{2}$.
Thus, $W_{k_{j}+1}=W_{k_{j}}M_{k_{j}-1}$ for all $k_j$ and $W_{k+1}=W_{k}$ for all $k\neq k_{j}$.
In the proof of (7), we have $\lim\limits_{j\to\infty}{t_{k_{j}}}=0$ and $\textrm{rep}(\x)=\varliminf\limits_{k\to\infty}{[1;1+a_{k},a_{k-1},a_{k-2},\dots]}$. 
In conclusion, $\min\LLL(\theta)=\varliminf\limits_{k\to\infty}{[1;1+a_{k},a_{k-1},a_{k-2},\dots]}$.
\end{proof}

\section{The spectrum of the exponents of repetition of Fibonacci words}\label{GoldenRatio}

In this section, we investigate $\LLL(\varphi)$ where $\varphi:=\frac{\sqrt{5}-1}{2}=[0;\bar{1}]$.
In what follows, assume that $\x$ is a Sturmian word of slope $\varphi$, called a \emph{Fibonacci word}.
Note that $M_{k+1}={M_{k}}^{a_{k+1}}M_{k-1}=M_{k}M_{k-1}$ for all $k\ge1$.
The following lemma is a special case of Lemma \ref{LocatingChainRule}.

\begin{lem}\label{LocatingChainRuleForGoldenRatio}
Let $k\ge1$. The following statements hold.

(1) If $\x\in \C_{k}^{\I}$, then $\x\in \C_{k+1}^{\III}$ and $W_{k+1}=W_k$.

(2) If $\x\in \C_{k}^{\II}$, then $\x\in \C_{k+1}^{\I}\cup\C_{k+1}^{\II}$ and $W_{k+1}=W_{k}M_{k-1}$.

(3) If $\x\in \C_{k}^{\III}$, then $\x\in \C_{k+1}^{\I}\cup\C_{k+1}^{\II}$ and $W_{k+1}=W_k$.
\end{lem}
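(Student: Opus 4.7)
The plan is to deduce this lemma as a direct specialization of Lemma \ref{LocatingChainRule} to the slope $\varphi=[0;\overline{1}]$, where every partial quotient equals $1$. The reason such a specialization is essentially all that is needed is that Lemma \ref{LocatingChainRule} already classifies, for arbitrary slope, the transitions between levels $k$ and $k+1$; restricting to $a_{k+1}=1$ collapses several of those cases.

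For parts (2) and (3) I would simply invoke parts (3) and (4) of Lemma \ref{LocatingChainRule}: those statements are unconditional in the value of $a_{k+1}$, so they transfer verbatim, giving that from $\C_k^{\II}$ or $\C_k^{\III}$ one lands in $\C_{k+1}^{\I}\cup\C_{k+1}^{\II}$, with $W_{k+1}=W_k M_{k-1}$ or $W_{k+1}=W_k$, respectively.

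For part (1) the key observation is that case (1) of Lemma \ref{LocatingChainRule} asserts the existence of an integer $t$ with $1\le t\le a_{k+1}-1$. Since here $a_{k+1}=1$, this interval is empty, so $\x\in \C_k^{\I}$ cannot be combined with $\x\in\C_{k+1}^{\I}\cup\C_{k+1}^{\II}$. Because the three sets $\C_{k+1}^{\I},\C_{k+1}^{\II},\C_{k+1}^{\III}$ partition the set of Sturmian words of slope $\varphi$ (by the uniqueness portion of Lemma 7.2 in \cite{BK19}, recalled in Section \ref{Def_and_Exam}), the only remaining possibility is $\x\in\C_{k+1}^{\III}$. Then Lemma \ref{LocatingChainRule}(2) delivers $W_{k+1}=W_k$, completing part (1).

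There is no real obstacle here; all the combinatorial content already resides in Lemma \ref{LocatingChainRule}. The only point worth double-checking while drafting is consistency with the Fibonacci recurrence $M_{k+1}=M_k M_{k-1}$, which indeed matches setting $a_{k+1}=1$ in the general recursion $M_{k+1}=M_k^{a_{k+1}}M_{k-1}$, so the prefixes appearing in each of the three cases reduce to the expected Fibonacci forms.
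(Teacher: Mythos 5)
Your proposal is correct and follows essentially the same route as the paper: the paper likewise deduces parts (2) and (3) verbatim from Lemma \ref{LocatingChainRule}(3) and (4), and obtains part (1) by noting that the conclusion of Lemma \ref{LocatingChainRule}(1) requires $1\le t\le a_{k+1}-1$, which is impossible for $a_{k+1}=1$, forcing $\x\in\C_{k+1}^{\III}$. The only cosmetic difference is that the paper re-derives $W_{k+1}=W_k$ directly from the uniqueness of $W_k$ rather than citing Lemma \ref{LocatingChainRule}(2), but both are valid.
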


\begin{proof}
For $k\ge1$, let $W_{k}$ be the unique non-empty prefix of $\x$ defined in which case $\x$ belongs to at level $k$.

(1) Since $a_{k}=1$ for all $k\ge1$, the assumption of (1) in Lemma \ref{LocatingChainRule} cannot be satisfied.
Hence, $\x\in\C_{k+1}^{\III}$.
Since $\x$ starts with $W_{k+1}M_{k+1}\widetilde{M}_{k+1}=W_{k+1}M_{k}\widetilde{M}_{k}D'_{k}{M}_{k+1}^{--}$ for the suffix $W_{k+1}$ of $M_{k}$, $W_{k+1}=W_{k}$ by the uniqueness of $W_{k}$.

(2) and (3) are equivalent to (3) and (4) in Lemma \ref{LocatingChainRule} respectively.
\end{proof}


\begin{table}[t]
    \begin{tabular}{ c | c | c }
      \hline
      $k$   &  $k+1$  & The relation between $W_{k+1}$ and $W_{k}$\\
      \hline \hline
      case (i) & case (iii)  &   $W_{k+1} = W_k$ \\
      \hline
      case (ii) & case (i) case (ii) &  $W_{k+1} = W_k M_{k-1}$ \\
      \hline
      case (iii) & case (i) case (ii) & $W_{k+1} = W_k$ \\
      \hline
    \end{tabular}
    \captionof{table}{The relation between $W_{k+1}$ and $W_{k}$}
\end{table}

By Lemma \ref{LocatingChainRuleForGoldenRatio}, only (iii) should follow (i) in the locating chain of $\x$: $\x\in\C_{k}^{\I}$ implies $\x\in\C_{k+1}^{\III}$.
Hence, if $\x\notin\C_{1}^{\III}$, then the locating chain of $\x$ can be expressed as an infinite sequence of (i)(iii) and (ii). 
If $\x\in\C_{1}^{\III}$, then the locating chain of $\x$ is an infinite sequence of (i)(iii) and (ii), except for the first letter (iii).
Let us denote (i)(iii) and (ii) by $a$ and $b$, respectively.

Since only (iii) should follow (i) in the locating chain of $\x$, the assumptions of Lemma \ref{PositionToComputeGeneralcase} (1) and (2) cannot be satisfied. 
We have the following lemma corresponding to Lemma \ref{PositionToComputeGeneralcase}.
Using $q_{k+1}=q_{k}+q_{k-1}$ for all $k\ge1$, Lemma \ref{PositionToComputeGeneralcase} (3)-(6) are equivalent to (1)-(4) of the following lemma, respectively.

\begin{lem}\label{PositionToCompute}
Let $k\ge1$.

(1) If $\x\in \C_{k}^{\I}\cap\C_{k+2}^{\I}$, then  
$\Lambda(\x) \cap [q_{k+1} -1, q_{k+3}-2] = \{ q_{k+1} + |W_k| -1, q_{k+2} -1\}.$

(2) If $\x\in \C_{k}^{\I}\cap\C_{k+2}^{\II}$, then
$\Lambda(\x) \cap [q_{k+1} -1, q_{k+3}-2] = \{ q_{k+1} + |W_k| -1 \}.$

(3) If $\x\in \C_{k}^{\II}\cap\C_{k+1}^{\I}$, then
$\Lambda(\x) \cap [q_{k+1} -1, q_{k+2}-2] = \{ q_{k+1} -1\}.$ 

(4) If $\x\in \C_{k}^{\II}\cap\C_{k+1}^{\II}$, then
$\Lambda(\x) \cap [q_{k+1} -1, q_{k+2}-2] = \{ q_{k+1} -1, q_{k+1} + |W_k| -1 \}.$
\end{lem}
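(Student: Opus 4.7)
The plan is to read off Lemma \ref{PositionToCompute} as the specialization of Lemma \ref{PositionToComputeGeneralcase} to the slope $\theta=\varphi$, where every partial quotient equals $1$ and $q_{k+1}=q_{k}+q_{k-1}$ for all $k\ge 1$. No new combinatorics on words is needed; the entire argument is a matching of hypotheses and a substitution of the parameters.

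First I would line up the hypotheses. For parts (1) and (2) of the new lemma the assumption is $\x\in\C_{k}^{\I}\cap\C_{k+2}^{\I}$ or $\x\in\C_{k}^{\I}\cap\C_{k+2}^{\II}$; by Lemma \ref{LocatingChainRuleForGoldenRatio}(1), $\x\in\C_{k}^{\I}$ already forces $\x\in\C_{k+1}^{\III}$, so these are precisely the hypotheses of Lemma \ref{PositionToComputeGeneralcase}(3) and (4). Parts (3) and (4) of the new lemma repeat the hypotheses of Lemma \ref{PositionToComputeGeneralcase}(5) and (6) verbatim; moreover in part (5) only the sub-case $a_{k+1}=1$ applies. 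Next I would substitute numerical data: with $a_i=1$ for every $i$, whenever the parameter $t$ appears in the abbreviations $\FPG{k}$ or $\FPGW{k}$ it is forced to the value $t=1$ (the transition rules summarised in Lemma \ref{LocatingChainRuleForGoldenRatio} never introduce $M_k^{t}$ with $t\ge 2$ into $W_{k+1}$). Hence $\FPG{k}=q_{k}+q_{k-1}-1$ and $\FPGW{k}=|W_{k}|+q_{k}+q_{k-1}-1$, while by definition $\FPW[k]=|W_{k}|+q_{k+1}-1$ and $\FP[k+1]=q_{k+2}-1$. Applying the Fibonacci identity $q_{k+1}=q_{k}+q_{k-1}$ then rewrites every endpoint of the intervals $[\FPG{k},\FPG{k+2}-1]$ and $[\FPG{k},\FPG{k+1}-1]$ and every listed element of $\Lambda(\x)$ from Lemma \ref{PositionToComputeGeneralcase}(3)--(6) into the compact form displayed in Lemma \ref{PositionToCompute}(1)--(4).

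The only bookkeeping point that deserves attention is the verification that the remaining cases of the general lemma contribute nothing. When $a_{k+1}=1$, the ranges $1\le t\le a_{k+1}-1$ in Lemma \ref{PositionToComputeGeneralcase}(1) and (2) are empty, so those parts are vacuous in the golden-ratio setting; combined with the prohibition of the transition $\C_{k}^{\I}\to\C_{k+1}^{\I}\cup\C_{k+1}^{\II}$ recorded in Lemma \ref{LocatingChainRuleForGoldenRatio}, this confirms that the branches (3)--(6) of the general lemma are the only ones that can occur, which is exactly why Lemma \ref{PositionToCompute} lists only four cases. The main conceptual obstacle has already been overcome in the proof of Lemma \ref{PositionToComputeGeneralcase}; the work here is purely to track the degeneracy of the parameter $t$ and to absorb $q_k+q_{k-1}$ into $q_{k+1}$ throughout.
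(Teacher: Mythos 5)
Your proposal is correct and is exactly the route the paper takes: the text preceding the lemma notes that cases (1)--(2) of Lemma \ref{PositionToComputeGeneralcase} cannot occur for slope $\varphi$ and that, using $q_{k+1}=q_k+q_{k-1}$, parts (3)--(6) of that lemma are equivalent to (1)--(4) here, with no further proof given. Your additional bookkeeping (matching hypotheses via Lemma \ref{LocatingChainRuleForGoldenRatio}, the degeneracy $t=1$, and the vacuity of the sub-case $a_{k+1}>1$ in part (5)) is accurate and only makes explicit what the paper leaves implicit.
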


For $l=1,2,3,4$, define \[\Lambda'_{l}(\x):=\{n\in\Lambda(\x): \text{$n$ appears in ($l$) of Lemma \ref{PositionToCompute}}\}.\]
It is obvious that $\Lambda'_{l}(\x)$'s are mutually distinct and $\cup_{l=1}^{4}{\Lambda'_{l}(\x)}=\Lambda(\x)\cap[q_{2}-1,\infty)=\Lambda(\x)$.
From the definition of $\Lambda_{l}(\x)$, $\Lambda'_{l}(\x)=\Lambda_{l+2}(\x)$ for $l=1,2,3,4$ where the slope of $\x$ is $\varphi$.

Note that $\rep(\x)$ is the limit infimum of $(1+\frac{n_{i}}{n_{i+1}})$'s for $n_{i}\in\cup_{l=1}^{4}{\Lambda'_{l}(\x)}$. 
The following lemma says that it is enough to consider the elements of $\Lambda'_{2}(\x)$ and $\Lambda'_{3}(\x)$ to obtain $\rep(\x)$.


\begin{lem}\label{ComputationOnRep}
Suppose that both $a$ and $b$ appear infinitely many in the locating chain of $\x$. 
Then, \[\rep(\x)=\liminf\limits_{n_{i}\in\Lambda'_{2}(\x)\cup\Lambda'_{3}(\x)}{\left(1+\frac{n_i}{n_{i+1}}\right)}.\]
\end{lem}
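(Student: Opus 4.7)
Since $\Lambda'_2(\x)\cup\Lambda'_3(\x)\subset\Lambda(\x)$, the inequality
\[
\rep(\x)\;\le\;\liminf_{n_i\in\Lambda'_2(\x)\cup\Lambda'_3(\x)}\Bigl(1+\frac{n_i}{n_{i+1}}\Bigr)
\]
is automatic, so only the reverse direction requires work. My plan is to show that for every $n_i\in\Lambda'_1(\x)\cup\Lambda'_4(\x)$, the ratio $1+n_i/n_{i+1}$ is bounded below, up to $o(1)$ as $i\to\infty$, by the ratio attached to the last position of the block of the locating chain in which $n_i$ lies. By hypothesis the locating chain is an infinite alternating sequence of finite maximal blocks of consecutive $a$'s and consecutive $b$'s. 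By Lemma~\ref{PositionToCompute}, positions in $\Lambda'_1(\x)$ (resp.\ $\Lambda'_4(\x)$) arise only strictly inside an $a$-block (resp.\ $b$-block) of length $\ge 2$, and the last position produced by an $a$-block (resp.\ $b$-block) lies in $\Lambda'_2(\x)$ (resp.\ $\Lambda'_3(\x)$).

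Set $\eta_k=q_{k-1}/q_k$, $t_k=|W_k|/q_k\in(0,1]$, $\epsilon_k=1/q_k$, and recall from Lemma~\ref{LocatingChainRuleForGoldenRatio} that $|W|$ is unchanged across every $a$ and satisfies $|W_{k+1}|=|W_k|+q_{k-1}$ across every $b$. For an $a$-block at levels $k,k+2,\dots,k+2(m-1)$ of length $m\ge 2$, the invariance $|W_{k+2j}|=|W_k|$ lets me write the within-block $\Lambda'_1$-ratios
\[
r_{2j-1}=\frac{q_{k+2j-1}+|W_k|-1}{q_{k+2j}-1},\qquad r_{2j}=\frac{q_{k+2j}-1}{q_{k+2j+1}+|W_k|-1}\qquad(1\le j\le m-1)
\]
and the exit $\Lambda'_2$-ratio $r_{2m-1}=(q_{k+2m-1}+|W_k|-1)/(q_{k+2m+1}-1)$ explicitly. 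Their asymptotic forms as $k\to\infty$ are $r_{2j-1}\sim\varphi(1+t_k\varphi^{2j-1})$, $r_{2j}\sim\varphi/(1+t_k\varphi^{2j+1})$, and $r_{2m-1}\sim\varphi^2(1+t_k\varphi^{2m-1})$. Using $0\le t_k\le 1$, $\varphi^{2s}\le\varphi^2<\varphi$ for $s\ge 1$, and the identity $\varphi^2(1+\varphi)=\varphi$, the elementary inequalities $\varphi(1+t_k\varphi^{2j-1})\ge\varphi\ge\varphi^2(1+t_k\varphi^{2m-1})$ and $\varphi(1+t_k\varphi^{2j+1})(1+t_k\varphi^{2m-1})\le 1$ (the second reducing in the extremal case $t_k=1$, $j=1$, $m=2$ to $8\varphi-4<1$) yield $r_{2j-1}, r_{2j}\ge r_{2m-1}+o(1)$.

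For a $b$-block at levels $k,\dots,k+l-1$ of length $l\ge 2$, the recursion $|W_{k+j}|=|W_{k+j-1}|+q_{k+j-2}$ makes $t_{k+j-1}$ non-decreasing in $j$. The within-block $\Lambda'_4$-ratios $(q_{k+j}-1)/(q_{k+j}+|W_{k+j-1}|-1)$ and $(q_{k+j}+|W_{k+j-1}|-1)/(q_{k+j+1}-1)$ are asymptotically $1/(1+t_{k+j-1}\varphi)$ and $\varphi(1+t_{k+j-1}\varphi)$, while the exit $\Lambda'_3$-ratio is $\sim\varphi/(1+t_{k+l}\varphi)\le\varphi$. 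The first family dominates the exit ratio because $t_{k+j-1}\le t_{k+l}$ and $1\ge\varphi$; the second dominates it because each of its terms is already $\ge\varphi$. Combining the $a$-block and $b$-block comparisons gives
\[
\liminf_{n_i\in\Lambda'_1(\x)\cup\Lambda'_4(\x)}\Bigl(1+\frac{n_i}{n_{i+1}}\Bigr)\;\ge\;\liminf_{n_i\in\Lambda'_2(\x)\cup\Lambda'_3(\x)}\Bigl(1+\frac{n_i}{n_{i+1}}\Bigr),
\]
and together with the trivial direction this proves the lemma.

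The main obstacle is bookkeeping the $o(1)$ corrections uniformly as both the base level $k$ and the block length $m$ or $l$ tend to infinity, since the governing $\varphi$-identity $\varphi^2(1+\varphi)=\varphi$ is exact only in the limit and the $\epsilon_k$-terms coming from the various ``$-1$''s in the numerators and denominators must be absorbed. I plan to run all comparisons at the level of exact finite ratios---using only the two algebraic facts about $|W|$ recorded above---so that the residual corrections appear only at the very last step, where they are clearly absorbed into the liminf.
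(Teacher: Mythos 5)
Your proposal is correct and follows essentially the same route as the paper: both decompose $\Lambda(\x)$ into the pieces contributed by the maximal $a$- and $b$-blocks of the locating chain via Lemma~\ref{PositionToCompute}, and show that every interior ($\Lambda'_{1}\cup\Lambda'_{4}$) ratio dominates a block-boundary ratio lying in $\Lambda'_{2}\cup\Lambda'_{3}$. The differences are only in execution---the paper compares against the entry ratio using exact monotone auxiliary sequences, whereas you compare against the exit ratio using asymptotic $\varphi$-expressions---and your deferred $o(1)$ bookkeeping is indeed routine, since every one of your comparisons except the second $b$-block family carries a uniform gap, and that last one is settled by the dichotomy ``$\ge\varphi-o(1)$ versus $<\varphi$''.
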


\begin{proof}
First, for each $k$ satisfying $\x\in\C_{k}^{\II}\cap\C_{k+1}^{\I}$, there exists $d(k)>0$ such that $\x\in\C_{k}^{\II}\cap\C_{k+2d(k)+1}^{\II}$ and $\x\in\C_{k+2d-1}^{\I}\cap\C_{k+2d}^{\III}$ for $1\le d\le d(k)$.
By Lemma \ref{PositionToCompute}, $\mathcal{J}_{k}:=\Lambda(\x)\cap[q_{k+1}-1, q_{k+2d(k)+2}-2]\subset\Lambda'_{1}(\x)\cup\Lambda'_{2}(\x)\cup\Lambda'_{3}(\x)$.
Note that
$W_{k+1}=\cdots=W_{k+2d(k)+1}$ and 
\[\frac{q_{j+1} +|W_{j}|-1}{q_{j+2}-1}\ge \frac{q_{j+1}-1}{q_{j+2} +|W_{j+1}|-1}\]
for $j=k+1,k+3,\dots,k+2d(k)-1.$ 
Since
\[\frac{q_{j+1}-1}{q_{j+2} +|W_{j+1}|-1}
\] 
is increasing for $j=k,k+1,\dots,k+2d(k)-2$, \[\min_{n_{i}\in\mathcal{J}_{k}}{\frac{n_i}{n_{i+1}}}=\min\bigg\{\frac{q_{k+1}-1}{q_{k+2} +|W_{k+1}|-1}, \frac{q_{k+2d(k)} +|W_{k+2d(k)-1}|-1}{q_{k+2d(k)+2}-1}\bigg\}=\min_{n_{i}\in(\Lambda'_{2}(\x)\cup\Lambda'_{3}(\x))\cap\mathcal{J}_{k}}{\frac{n_i}{n_{i+1}}}.
\]

Second, for each $l$ satisfying $\x\in\C_{l}^{\III}\cap\C_{l+1}^{\II}$, there exists $d'(l)>0$ such that $\x\in\C_{l}^{\III}\cap\C_{l+d'(l)+1}^{\I}$ and $\x\in\C_{j}^{\II}$ for $l+1\le j\le l+d'(l)$.
By Lemma \ref{PositionToCompute}, $\mathcal{J}_{l}^{'}:=\Lambda(\x)\cap[q_{l}-1, q_{l+d'(l)+2}-2]\subset\Lambda'_{2}(\x)\cup\Lambda'_{3}(\x)\cup\Lambda'_{4}(\x)$.
Note that 
\[\frac{q_{l} +|W_{l-1}|-1}{q_{l+2}-1}\le\frac{q_{l+1}-1}{q_{l+2}-1}\textrm{ and }
\frac{q_{j+1}-1}{q_{j+2}-1}\le\frac{q_{j+1}-1}{q_{j+1} +|W_{j}|-1}, \frac{q_{j+1} +|W_{j}|-1}{q_{j+2}-1}
\]
for $j=l+1,\dots,l+d'(l)-1$.
Since 
\[
\frac{q_{j}-1}{q_{j+1}-1}
\]
is increasing for $j=l+1,\dots,l+d'(l)$,
\[\min_{n_{i}\in\mathcal{J}_{l}^{'}}{\frac{n_i}{n_{i+1}}}=\min\bigg\{\dfrac{q_{l} +|W_{l-1}|-1}{q_{l+2}-1}, \dfrac{q_{l+d'(l)+1}-1}{q_{l+d'(l)+2}+|W_{l+d'(l)+1}|-1}\bigg\}=\min_{n_{i}\in(\Lambda'_{2}(\x)\cup\Lambda'_{3}(\x))\cap\mathcal{J}_{l}^{'}}{\frac{n_i}{n_{i+1}}}.
\]
Since $\Lambda(\x)$ is the union of $\mathcal{J}_{k}$'s and $\mathcal{J}_{l}^{'}$'s, $\rep(\x)=\liminf\limits_{n_{i}\in\Lambda'_{2}(\x)\cup\Lambda'_{3}(\x)}{\left(1+\frac{n_i}{n_{i+1}}\right)}$.
\end{proof}

Let $d$ be a positive integer. 
We define \emph{$a$-chain} to be a subword $aa\cdots a$ in the locating chain of $\x$ which $b$ appears before and after.
For example, if the locating chain of $\x=abbaaabaab\dots$, then $a$-chains are $a, aaa, aa, \dots$. 
Similarly, \emph{$b$-chain} is defined by a subword $bb\cdots b$ in the locating chain of $\x$ which $a$ appears before and after.
We say that an $a$-chain or a $b$-chain is a \emph{chain}.
From the definition of a chain, $a$-chains and $b$-chains alternatively appear in the locating chain of $\x$.
We can choose two sequences $\{m_i{(\x)}\}_{i\ge1}$ and $\{l_j{(\x)}\}_{j\ge1}$ defined as follows: 
Let $m_{i}{(\x)}$ (resp., $l_j{(\x)}$) be the length of the $i$th $a$-chain (resp., the $j$th $b$-chain) in the locating chain of $\x$. 
Let $b^{l_1{(\x)}}$ follow $a^{m_{1}{(\x)}}$. 
In other words, the locating chain of $\x$ is $c(\x)a^{m_{1}{(\x)}}b^{l_{1}{(\x)}}a^{m_{2}{(\x)}}b^{l_{2}{(\x)}}\dots.$ for the unique finite word $c(\x)$.
For example, if the locating chain of $\x$ is $\text{(iii)}bbabbabaabbba\dots$, then $c(\x)=\text{(iii)}bb$, $m_{1}{(\x)}=1$, $m_{2}{(\x)}=1$, $m_{3}{(\x)}=2$, $l_{1}{(\x)}=2$, $l_{2}{(\x)}=1$, $l_{3}{(\x)}=3$.
Let
\[
S_{d}:=\{\x :\text{$m_i{(\x)}\ge d$ for infinitely many $i$ or $l_j{(\x)}\ge d$ for infinitely many $j$}\}.
\]
By definition, $S_{d+1}\subset S_{d}$. 
In what follows, we will write $m_i{(\x)}\text{ and }l_j{(\x)}$ simply $m_i\text{ and }l_j$, when no confusion can arise.
Now, let us prove Theorem \ref{TheMaxMinForGoldenRatio}, \ref{TheRightGap}, \ref{TheThirdRightGap} and Proposition \ref{Cardinality_Of_Sturmian words}.

\begin{thm}\label{Proof_TheMaxMinForGoldenRatio}
Let $\x$ be a Sturmian word of slope $\varphi$. 
Then, $\mu_{min} \le \rep(\x) \le\mu_{max}.$
Moreover, the locating chain of $\x$ is $u \overline{a}$ or $v\overline{b}$ for some finite words $u,v$ if and only if $\rep(\x) = \mu_{max}.$
We have $\x\in S_d$ for any $d\ge1$ if and only if $\rep(\x) = \mu_{min}.$
\end{thm}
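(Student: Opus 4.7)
The plan is to combine the general lower bound from Theorem \ref{Proof_TheMinOfSpec} with a chain-by-chain analysis of the positions produced by Lemma \ref{PositionToCompute}. Since $\varphi=[0;\overline{1}]$, specializing Theorem \ref{Proof_TheMinOfSpec} immediately gives $\rep(\x)\ge[1;2,\overline{1}]=1+\varphi^{2}=\mu_{min}$ for every Sturmian word of slope $\varphi$, so that half of the chain of inequalities is free. The remaining tasks are the bound $\rep(\x)\le\mu_{max}$ and the two equality characterizations, both of which I obtain by tracking the quantity $|W_{k}|/q_{k}$ along the locating chain $c(\x)a^{m_{1}}b^{l_{1}}a^{m_{2}}b^{l_{2}}\cdots$ and using the recursion $|W_{k+1}|=|W_{k}|+q_{k-1}$ that Lemma \ref{LocatingChainRuleForGoldenRatio}(2) imposes at every $\II$-transition.

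To establish the maximum, assume first the locating chain is $u\overline{a}$. From some level $K$ onwards $\x\in\C_{k}^{\I}\cap\C_{k+2}^{\I}$ for every admissible $k$, and Lemma \ref{LocatingChainRuleForGoldenRatio}(1)(3) forces $|W_{k}|$ to be eventually constant; Lemma \ref{PositionToCompute}(1) then produces pairs $\{q_{k+1}+|W_{k}|-1,q_{k+2}-1\}$ whose successive ratios both tend to $\varphi$, yielding $\rep(\x)=1+\varphi=\mu_{max}$. For $v\overline{b}$ the analogous calculation uses Lemma \ref{PositionToCompute}(4) together with the identity $|W_{k}|=|W_{K}|+q_{k}-q_{K}$ obtained by iterating the recursion, so that $|W_{k}|/q_{k}\to 1$ and the smaller of the two ratios produced in each interval again tends to $\varphi$. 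Conversely, if both $a$ and $b$ appear infinitely often, Lemma \ref{ComputationOnRep} restricts the $\liminf$ to $n_{i}\in\Lambda'_{2}(\x)\cup\Lambda'_{3}(\x)$; for any $\Lambda'_{3}$-element $q_{k+1}-1$ arising at the end of a $b$-chain of length $l_{j}$, iterating the recursion through the $b$-chain gives $|W_{k+1}|\ge q_{k+1}-q_{k+1-l_{j}}$, so the next element of $\Lambda(\x)$ is $q_{k+2}+|W_{k+1}|-1$ and the resulting ratio is eventually bounded above by $\varphi/(1+\varphi-\varphi^{2})=\frac{1}{2}<\varphi$, forcing $\rep(\x)\le\frac{3}{2}<\mu_{max}$.

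For the minimum characterization I first prove $\x\in\bigcap_{d\ge1}S_{d}\Rightarrow\rep(\x)=\mu_{min}$. Suppose $\limsup_{i}m_{i}=\infty$. Along a subsequence $i_{k}$ with $m_{i_{k}}\to\infty$, the $\Lambda'_{2}$-element at the end of the $i_{k}$-th $a$-chain, which has the form $q_{k+1}+|W_{k}|-1$ at the corresponding level $k$, satisfies $|W_{k}|/q_{k+1}\to 0$ because $|W_{k}|$ was frozen at the start of that $a$-chain by Lemma \ref{LocatingChainRuleForGoldenRatio}(1)(3) and is therefore bounded by $q_{k-2m_{i_{k}}+1}$, whereas $q_{k+1}/q_{k-2m_{i_{k}}+1}$ grows like $\varphi^{-2m_{i_{k}}}$. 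The subsequent element of $\Lambda(\x)$ is $q_{k+3}-1$, so the ratio tends to $\varphi^{2}$, proving $\rep(\x)\le 1+\varphi^{2}=\mu_{min}$. The case $\limsup_{j}l_{j}=\infty$ is symmetric: long $b$-chains force $|W_{k+1}|/q_{k+2}\to\varphi$, whence the ratio from the corresponding $\Lambda'_{3}$-element tends to $\varphi/(1+\varphi)=\varphi^{2}$. Combining with the a priori lower bound yields equality in either case.

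The converse is the main obstacle: assuming both $\{m_{i}\}$ and $\{l_{j}\}$ are bounded by some $D$, I must show $\rep(\x)>\mu_{min}$. The plan is to iterate the $|W|$-recursion with step sizes bounded in terms of $D$ to obtain explicit positive lower bounds, depending only on $D$, on the quantities $|W_{k}|/q_{k}$ arising at the end of each $a$- and $b$-chain, and to translate these via the asymptotic formulas for the ratios at $\Lambda'_{2}$- and $\Lambda'_{3}$-elements into a uniform strict inequality $n_{i}/n_{i+1}>\varphi^{2}$ for all sufficiently large $i$. The delicate point is to propagate this strict bound uniformly over $i$ rather than only along cleverly chosen subsequences, which amounts to showing that the normalized quantities converge to the fixed-point values $\tau=(1-\varphi^{l})/(1-\varphi^{2m+l})$ of the periodic model $(a^{m}b^{l})^{\infty}$ and that these values stay bounded away from both extremes $0$ and $\varphi$ as long as $m,l\le D$.
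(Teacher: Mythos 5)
Your route coincides with the paper's: the same case split into $u\overline{a}$, $v\overline{b}$, and ``both letters infinitely often,'' the same use of Lemma \ref{PositionToCompute} and Lemma \ref{ComputationOnRep}, and the same mechanism (tracking $|W_k|/q_k$ along chains) for both characterizations. Citing Theorem \ref{Proof_TheMinOfSpec} for $\rep(\x)\ge\mu_{min}$ is a legitimate shortcut (the paper rederives this bound from Lemma \ref{ComputationOnRep}), and your computations for the $\mu_{max}$ equivalence and for the implication $\x\in\bigcap_{d}S_d\Rightarrow\rep(\x)=\mu_{min}$ are correct.

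The genuine gap is the remaining implication, which you label ``the main obstacle'' and then leave as a plan. Two remarks. First, the plan as formulated leans on convergence of the normalized quantities to ``fixed-point values of the periodic model $(a^mb^l)^\infty$''; for an aperiodic locating chain with bounded chain lengths these quantities need not converge at all, so this is the wrong target — all that is needed is a two-sided bound, uniform in $i$ and depending only on $D$ (a pointwise strict inequality $n_i/n_{i+1}>\varphi^2$ would not control the $\liminf$). Second, that uniform bound is elementary and should be stated: if eventually every chain has length at most $D$, then at a $\Lambda'_2(\x)$-position closing an $a$-chain the word $W_k$ equals the $W$ produced by the last $\II$-step before that chain, so $|W_k|\ge q_{k-2D}$ and the corresponding ratio satisfies $\frac{q_{k+1}+|W_k|-1}{q_{k+3}-1}\ge \varphi^2(1+\varphi^{2D+1})+o(1)$; at a $\Lambda'_3(\x)$-position closing a $b$-chain, iterating $|W_{j+1}|=|W_j|+q_{j-1}$ through at most $D$ steps from a $W$ that is a suffix of $M_{j_1-2}$ gives $|W_{k+1}|\le q_{k+1}-q_{k-D}$, whence $\frac{q_{k+1}-1}{q_{k+2}+|W_{k+1}|-1}\ge \frac{\varphi^2}{1-\varphi^{D+3}}+o(1)$. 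With Lemma \ref{ComputationOnRep} this yields $\rep(\x)\ge 1+\varphi^2(1+c_D)$ for an explicit $c_D>0$, closing the argument. (The paper reaches the same conclusion by the contrapositive: equality $\rep(\x)=\mu_{min}$ forces $\liminf |W_{k(i)-1}|/q_{k(i)}=0$ or $\limsup |W_{k'(i)+1}|/q_{k'(i)+1}=1$, which bounded chain lengths forbid.) With that estimate supplied, your proposal is complete and essentially identical in substance to the paper's proof.
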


\begin{proof}
First, assume that there exists a constant $K$ such that $\x\in\C_{k}^{\II}$ for all $k\ge K$.
By Lemma \ref{PositionToCompute}, $\Lambda(\x)\cap[q_{K+1}-1,\infty)=\{q_{k}-1,q_{k}+|W_{k-1}|-1:k\ge K+1 \}$.
Since $W_{k+1}=W_{k}M_{k-1}$ for any $k\ge K$,
\begin{align*}
    \liminf_{n_i \in\Lambda(\x)}{\frac{n_i}{n_{i+1}}}
    &=\liminf_{k\ge K+1}{\left\{\frac{q_{k}-1}{q_{k}+|W_{k-1}|-1},\frac{q_{k}+|W_{k-1}|-1}{q_{k+1}-1}\right\}}\\
    &=\min\left\{\liminf_{k\ge K+1}{\left(\frac{q_{k}-1}{q_{k}+|W_{k-1}|-1}\right)}, \liminf_{k\ge K+1}{\left(\frac{q_{k}+|W_{k-1}|-1}{q_{k+1}-1}\right)}\right\}
    =\min\{\varphi, 1\}=\varphi.
\end{align*}
Hence, $\rep(\x)=\liminf\limits_{n_i \in\Lambda(\x)}{\left(1+\frac{n_i}{n_{i+1}}\right)}=\mu_{max}$. 

Second, assume that there exists a constant $K$ such that $\x\in\C_{K+2l}^{\I}\cap\C_{K+2l+1}^{\III}$ for all $l\ge 0$.
By Lemma \ref{PositionToCompute}, $\Lambda(\x)\cap[q_{K+1}-1,\infty)=\{q_{K+2l+1}+|W_{K+2l}|-1, q_{K+2l+2}-1:l\ge 0 \}$.
Since $|W_{k}|$ is constant for $k\ge K$,
\begin{align*}
    \liminf_{n_i \in\Lambda(\x)}{\frac{n_i}{n_{i+1}}}
    &=\liminf_{l\ge 0}{\left\{\frac{q_{k+2l+1}+|W_{k+2l}|-1}{q_{K+2l+2}-1},\frac{q_{K+2l+2}-1}{q_{k+2l+3}+|W_{k+2l+2}|-1}\right\}}\\
    &=\min\left\{\liminf_{l\ge 0}{\left(\frac{q_{k+2l+1}+|W_{k+2l}|-1}{q_{K+2l+2}-1}\right)},\liminf_{l\ge 1}{\left(\frac{q_{K+2l}-1}{q_{k+2l+1}+|W_{k+2l}|-1}\right)}\right\}
    =\varphi.
\end{align*}
Hence, $\rep(\x)=\liminf\limits_{n_i \in\Lambda(\x)}{\left(1+\frac{n_i}{n_{i+1}}\right)}=\mu_{max}$.

Now, let both $a$ and $b$ occur infinitely many in the locating chain of $\x$.
Since $ba$ appears infinitely many in the locating chain of $\x$, we can choose an infinite sequence $\{n_{i(j)}\}_{j\ge1}\subset\Lambda'_{3}(\x)$. For each $j\ge 1$, Lemma \ref{PositionToCompute} gives $n_{i(j)} = q_{k+1}-1$,$n_{i(j)+1}=q_{k+2}+|W_{k+1}|-1$ for some $k=k(j).$
Note that $W_{k(j)+1}=W_{k(j)}M_{k(j)-1}$
By definition, 
\begin{align*}
\rep(\x)
&\le \liminf_{j\to\infty}{\left(1+\dfrac{q_{k(j)+1}-1}{q_{k(j)+2}+|W_{k(j)+1}|-1}\right)}\\
&\le\liminf_{j\to\infty}{\left(1+\dfrac{q_{k(j)+1}-1}{q_{k(j)+2}+q_{k(j)-1}-1}\right)}
=1+\dfrac{\varphi}{1+\varphi^3}<\mu_{max}.
\end{align*}
Hence, $\rep(\x)< \mu_{max}$.
In other words, $\rep(\x)= \mu_{max}$ implies that the locating chain of $\x$ is $u \overline{a}$ or $v\overline{b}$ for some finite words $u,v$.

Finally, let us use Lemma \ref{ComputationOnRep} to find out the minimum of $\LLL(\varphi)$. 
For $n_{i}\in\Lambda'_{2}(\x)$, let $n_i = q_{k}+|W_{k-1}|-1$, $n_{i+1}=q_{k+2}-1$ for some $k=k(i).$
For $n_{i}\in\Lambda'_{3}(\x)$, let $n_i = q_{k'+1}-1$, $n_{i+1}=q_{k'+2}+|W_{k'+1}|-1$ for some $k'=k'(i).$
From Lemma \ref{ComputationOnRep},
\begin{align*}
    \rep(\x)
    &=\min\left\{\liminf\limits_{n_{i}\in\Lambda'_{2}(\x)}{\left(1+\frac{n_{i}}{n_{i+1}}\right)},\liminf\limits_{n_{i}\in\Lambda'_{3}(\x)}{\left(1+\frac{n_{i}}{n_{i+1}}\right)}\right\} \\
    &=\min\left\{\liminf_{i\to\infty}{\left(1+\dfrac{q_{k(i)}+|W_{k(i)-1}|-1}{q_{k(i)+2}-1}\right)},\liminf_{i\to\infty}{\left(1+\dfrac{q_{k'(i)+1}-1}{q_{k'(i)+2}+|W_{k'(i)+1}|-1}\right)}\right\}\\
    &\ge\min\left\{\liminf_{i\to\infty}{\left(1+\dfrac{q_{k(i)}-1}{q_{k(i)+2}-1}\right)},\liminf_{i\to\infty}{\left(1+\dfrac{q_{k'(i)+1}-1}{q_{k'(i)+2}+q_{k'(i)+1}-1}\right)}\right\}\\
    &=\min\left\{1+\varphi^2,1+\dfrac{\varphi}{\varphi+1}\right\}
    =\mu_{min}.
\end{align*}
Hence, $\rep(\x)\ge \mu_{min}$.
Moreover, if $\rep(\x)=\mu_{min}$, then \[\liminf\limits_{i\to\infty}{\dfrac{|W_{k(i)-1}|}{q_{k(i)}}}=0\textrm{ or }\limsup\limits_{i\to\infty}{\dfrac{|W_{k'(i)+1}|}{q_{k'(i)+1}}}=1.\]
Thus, arbitrarily long sequence $aa\dots a$ or $bb\dots b$ should occur in the locating chain of $\x$, i.e. $\x\in S_{d}$ for any $d\ge 1$.
\end{proof}


\begin{thm}\label{Proof_TheRightGap}
The intervals
$
\left( \mu_{2} , \mu_{max} \right),
\left(\mu_{3}, \mu_{2}\right)$
are maximal gaps in $\LLL(\varphi)$. Moreover, the locating chain of $\x$ is $u\overline{ab}$ for some finite word $u$ if and only if $\rep(\x) = \mu_{2}$. 
The locating chain of $\x$ is $v\overline{b^{2}a^{2}}$ for some finite word $v$ if and only if $\rep(\x)=\mu_{3}$.
\end{thm}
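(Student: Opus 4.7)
The plan is to first establish the achievability of $\mu_2$ and $\mu_3$ on the two advertised chain families, then derive the gap structure via a unified parameterization. For $\x$ whose locating chain is $u\overline{ab}$, I would use Lemma \ref{LocatingChainRuleForGoldenRatio}(2) to iterate $|W_{k+1}| = |W_k| + q_{k-1}$ (valid exactly when level $k$ is (ii)) along the periodic tail, obtaining the closed form $|W_{3j+1}| = |W_1| + \sum_{i=1}^{j} q_{3i-1}$, and hence $|W_{3j+1}|/q_{3j+1} \to 1/2$ by Binet-type asymptotics for $q_k$. Applying cases (2) and (3) of Lemma \ref{PositionToCompute} alternately, the ratios arising from $\Lambda'_{2}(\x)$ converge to $1/2$ while those from $\Lambda'_{3}(\x)$ converge to $2\varphi^3$, so Lemma \ref{ComputationOnRep} gives $\rep(\x) = 1 + 2\varphi^3 = \mu_2$. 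The same strategy applied to $v\overline{b^{2}a^{2}}$, whose associated steady-state value of $|W|/q$ at the start of each $a$-chain equals $\varphi/(8\varphi-4)$, yields $\rep(\x) = 1 + (8-12\varphi)/(7\varphi-3) = \mu_3$.

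For the converses and gap statements, I would parameterize any $\x$ whose locating chain contains both $a$ and $b$ infinitely often by its block data $\{(m_i,l_i)\}$, obtained from $c(\x)\,a^{m_1}b^{l_1}a^{m_2}b^{l_2}\cdots$, together with the auxiliary scalar $\beta_i := |W_{K_i}|/q_{K_i}$, where $K_i$ is the first level of the $i$-th $a$-chain. Using $q_{k+1}/q_k \to \Phi = 1/\varphi$ together with Lemma \ref{LocatingChainRuleForGoldenRatio}, the $\beta$-sequence satisfies
\[
\beta_{i+1} = 1 - (1 - \beta_i\,\varphi^{2m_i})\,\varphi^{l_i} + o(1),
\]
and from Lemma \ref{PositionToCompute} the ratios in $\Lambda'_{2}(\x)$ and $\Lambda'_{3}(\x)$ admit the asymptotics
\[
\frac{n_i}{n_{i+1}} \to \varphi^2 + \beta_i\,\varphi^{2m_i+1} \quad (n_i \in \Lambda'_{2}(\x)), \qquad \frac{n_i}{n_{i+1}} \to \frac{\varphi}{1+\beta_{i+1}\,\varphi} \quad (n_i \in \Lambda'_{3}(\x)).
\]
By Lemma \ref{ComputationOnRep}, $\rep(\x) - 1$ equals the $\liminf$ of these two families.

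With this representation in hand, the gap statements reduce to classifying which sequences $\{(m_i,l_i),\beta_i\}$ admit $\liminf \ge \mu_2 - 1$ or $\liminf \ge \mu_3 - 1$. The key algebraic identities $\varphi^2 + \varphi^5 = 2\varphi^3$ and $\varphi/(1-\varphi^6) = \varphi/(8\varphi-4)$ identify the fixed points of the $(1,1)$- and $(2,2)$-dynamics as $1/2$ and $\varphi/(8\varphi-4)$ respectively. I would then show by case analysis that any block pattern other than an eventually constant $(1,1)$ tail eventually pushes at least one of the two asymptotic ratios strictly below $\mu_2-1$, so $\rep(\x) > \mu_2$ together with Theorem \ref{Proof_TheMaxMinForGoldenRatio} forces the chain to be $u\overline{a}$ or $v\overline{b}$, giving $\rep(\x)=\mu_{\max}$; similarly, the only block-tail compatible with $\liminf \ge \mu_3-1$ other than $(1,1)$ is the constant $(2,2)$ tail, which is exactly the pattern $v\overline{b^2a^2}$ and yields equality $\rep(\x) = \mu_3$. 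The main obstacle is the coupling between the two ratios through $\beta_i$: increasing $\beta_i$ raises $R^{ab}$ but lowers $R^{ba}$, so termwise bounds fail and one must track the full $\beta$-trajectory. The technical heart is a monotonicity argument showing that deviations from the two special fixed points produce an unrecoverable drop in one of the ratios below the relevant threshold, carried out via the elementary but delicate algebra of $\varphi$ outlined above.
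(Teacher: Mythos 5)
Your setup is sound and, despite the dynamical-systems packaging, it is in substance the same argument as the paper's: both reduce everything to Lemma \ref{PositionToCompute} and Lemma \ref{ComputationOnRep} and then classify the block structure $a^{m_1}b^{l_1}a^{m_2}b^{l_2}\cdots$ of the locating chain. Your normalized quantity $\beta_i=|W_{K_i}|/q_{K_i}$, the recursion $\beta_{i+1}=1-(1-\beta_i\varphi^{2m_i})\varphi^{l_i}+o(1)$, and the two limiting ratios are all correct (the fixed points $1/2$ and $\varphi/(8\varphi-4)$ do give $2\varphi^3=\mu_{2}-1$ and $(8-12\varphi)/(7\varphi-3)=\mu_{3}-1$), so the achievability half is fine and is a clean repackaging of what the paper does by hand with $|W_k|$.

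The gap is that the entire quantitative content of the theorem sits inside the phrase ``I would then show by case analysis,'' and that analysis is never exhibited. What must actually be proved is a finite list of exclusions: if $m_j\ge 3$ infinitely often, or $l_j\ge 3$ infinitely often, or $(l_j,m_{j+1})=(1,2)$ infinitely often, or $(m_j,l_j)=(1,2)$ infinitely often, then $\rep(\x)<\mu_{3}$; combined with the observation that an $S_2$-word avoiding all four has tail $\overline{a^2b^2}$ and an $S_1\cap S_2^{c}$-word has tail $\overline{ab}$, this yields both gaps. The paper carries these out with explicit constants $1+\varphi^2+\varphi^7$, $1+\varphi/(1+2\varphi^3)$, $1+\varphi^2+\varphi^7+\varphi^8$, $1+\varphi/(1+\varphi^2+\varphi^7)$, each strictly below $\mu_{3}$. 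Your worry about the coupling through $\beta_i$ is legitimate in exactly one of these cases, but it is resolved more cheaply than by ``tracking the full $\beta$-trajectory'': the uniform bounds $\varphi^2+o(1)\le\beta_i\le 1$ suffice termwise, the lower bound holding because every $a$-chain is preceded by a level of type (ii), so $|W_{K_i}|\ge q_{K_i-2}$. For instance, for $(m_j,l_j)=(1,2)$ the crude bound $\beta_j\ge 0$ gives only $\varphi/(1+\varphi^2)\approx 0.4472>\mu_{3}-1$, which proves nothing, whereas $\beta_j\ge\varphi^2+o(1)$ gives $\beta_{j+1}\ge\varphi+\varphi^6+o(1)$ and hence a ratio at most $\varphi/(1+\varphi^2+\varphi^7)\approx 0.4364<\mu_{3}-1$. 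Until these four estimates are written out, the converse directions and the two gap statements are asserted rather than proved.
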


\begin{proof}
If $\rep(\x)<\mu_{max}$, then $\x\in S_{1}$ by Theorem \ref{TheMaxMinForGoldenRatio}.

First, let $\x\in S_1 \cap S_{2}^{c}$.
Since any chains of length greater than 1 occur at most finitely many in the locating chain of $\x$, there exists an integer $I>0$ satisfying $m_j=l_j=1$ for $j\ge I$.
Thus, the locating chain of $\x$ is $u\overline{ab}$ for some finite word $u$. 
Using Lemma \ref{PositionToCompute} and ~\ref{ComputationOnRep}, we obtain $\rep(\x)=\mu_{2}$.

Second, let $\x\in S_2$.
Using Lemma \ref{PositionToCompute} and \ref{ComputationOnRep}, $\rep(\x)=\mu_3$ where the locating chain of $\x$ is $v\overline{a^2 b^2}$ for some finite word $v$.
If $m_j\ge 3$ for infinitely many $j$, then there exists an infinite sequence $\{k(j)\}$ satisfying $\x\in\C_{k(j)-6}^{\I}\cap\C_{k(j)-4}^{\I}\cap\C_{k(j)-2}^{\I}\cap\C_{k(j)}^{\II}$ for all $j$.
By Lemma \ref{ComputationOnRep}, 
\[\rep(\x)
\le\liminf\limits_{j\to\infty}{\bigg(1+\frac{q_{k(j)-1}+|W_{k(j)-6}|-1}{q_{k(j)+1}-1}\bigg)}
\le 1+\varphi^2+\varphi^7
\] 
where $W_{k(j)}=W_{k(j)-6}$ for all $j$.
If $l_j\ge 3$ for infinitely many $j$, then there exists an infinite sequence $\{k(j)\}$ satisfying $\x\in\C_{k(j)-2}^{\II}\cap\C_{k(j)-1}^{\II}\cap\C_{k(j)}^{\II}\cap\C_{k(j)+1}^{\I}$ for all $j$.
By Lemma \ref{ComputationOnRep}, 
\begin{align*}
    \rep(\x)
    &\le\liminf\limits_{j\to\infty}{\bigg(1+\frac{q_{k(j)+1}-1}{q_{k(j)+2}+|W_{k(j)+1}|-1}\bigg)}\\
    &=\liminf\limits_{j\to\infty}{\bigg(1+\frac{q_{k(j)+1}-1}{q_{k(j)+2}+q_{k(j)-1}+q_{k(j)-2}+q_{k(j)-3}+|W_{k(j)-2}|-1}\bigg)}\\
    &\le\liminf\limits_{j\to\infty}{\bigg(1+\frac{q_{k(j)+1}-1}{q_{k(j)+2}+2q_{k(j)-1}-1}\bigg)}
    = 1+\frac{\varphi}{1+2\varphi^3}.
\end{align*}
Since $1+\varphi^2+\varphi^7, 1+\dfrac{\varphi}{1+2\varphi^3}< \mu_{3}$, $\rep(\x)<\mu_{3}$ for $\x\in S_{3}$.
Now, let $\x\in S_{2}\cap S_{3}^{c}$. 
By definition, there exists an integer $I>0$ such that $m_j, l_j\le2$ for $j\ge I$.
If $l_j=1$, $m_{j+1}=2$ for infinitely many $j$, then there exists an infinite sequence $\{k(j)\}$ satisfying $\x\in\C_{k(j)-7}^{\I}\cap\C_{k(j)-5}^{\II}\cap\C_{k(j)-4}^{\I}\cap\C_{k(j)-2}^{\I}\cap\C_{k(j)}^{\II}$ for all $j$.
By Lemma \ref{ComputationOnRep},
\[\rep(\x)
\le\liminf\limits_{j\to\infty}{\bigg(1+\frac{q_{k(j)-1}+q_{k(j)-6}+|W_{k(j)-7}|-1}{q_{k(j)+1}-1}\bigg)}
\le 1+\varphi^2+\varphi^7+\varphi^8
\]
where $W_{k(j)}=W_{k(j)-4}$, $W_{k(j)-5}=W_{k(j)-7}$, and $W_{k(j)-4}=W_{k(j)-5}M_{k(j)-6}$ for all $j$.
If $m_j=1$, $l_{j}=2$ for infinitely many $j$, then there exists an infinite sequence $\{k(j)\}$ satisfying $\x\in\C_{k(j)-4}^{\II}\cap\C_{k(j)-3}^{\I}\cap\C_{k(j)-1}^{\II}\cap\C_{k(j)}^{\II}\cap\C_{k(j)+1}^{\I}$ for all $j$.
By Lemma \ref{ComputationOnRep},
\begin{align*}
    \rep(\x)
    &\le\liminf\limits_{j\to\infty}{\bigg(1+\frac{q_{k(j)+1}-1}{q_{k(j)+2}+|W_{k(j)+1}|-1}\bigg)}\\
    &=\liminf\limits_{j\to\infty}{\bigg(1+\frac{q_{k(j)+1}-1}{q_{k(j)+2}+q_{k(j)-1}+q_{k(j)-2}+|W_{k(j)-3}|-1}\bigg)}\\
    &\le\liminf\limits_{j\to\infty}{\bigg(1+\frac{q_{k(j)+1}-1}{q_{k(j)+2}+q_{k(j)}+q_{k(j)-5}-1}\bigg)}
    = 1+\frac{\varphi}{\varphi^7+\varphi^2+1}
\end{align*}
where $W_{k(j)-1}=W_{k(j)-3}$ and $W_{k(j)-3}=W_{k(j)-4}M_{k(j)-5}$ for all $j$.
Since $1+\varphi^2+\varphi^7+\varphi^8, 1+\dfrac{\varphi}{\varphi^7+\varphi^2+1}<\mu_{3}$, $\rep(\x)<\mu_{3}$ where the locating chain of $\x$ is not $v\overline{a^2b^2}$ for some finite word $v$.
Hence, $\max\{\rep(\x):\x\in S_{2}\}=\mu_3$.
Moreover, $\rep(\x)=\mu_{3}$ if and only if the locating chain of $\x$ is $v\overline{a^2b^2}$ for some finite word $v$.
Therefore, two intervals $(\mu_{2},\mu_{max})$, $(\mu_{3},\mu_{2})$ are maximal gaps in $\LLL(\varphi)$.
On the other hand, by Theorem \ref{TheMaxMinForGoldenRatio}, $\rep(\x)=\mu_{max}$ if and only if $\x\in S_{1}^{c}$.
In the proof above, $\rep(\x)=\mu_{2}$ for $\x\in S_{1}\cap S_{2}^{c}$, and $\rep(\x)\le\mu_{3}$ for $\x\in S_{2}$.
Hence, $\rep(\x)=\mu_{2}$ if and only if $\x\in S_{1}\cap S_{2}^{c}$.
\end{proof}



\begin{thm}\label{Proof_TheThirdRightGap}
The interval $\left(\mu_{4}, \mu_{3}\right)$ is a maximal gap in $\LLL(\varphi)$.
Moreover, $\rep(\x)=\mu_{4}$ if and only if $\x \in S_{2}\cap S_{3}^{c}$ satisfies the following two conditions:\\
1) The locating chain of $\x$ is $u(b^2 a^2)^{e_1}ba(b^2 a^2)^{e_2}ba\dots$ for some finite word $u$ and integers $e_i \ge 1$.\\
2) $\limsup\limits_{i\ge1}\{e_{i}\}=\infty$.\\
Furthermore, $\mu_{4}$ is a limit point of $\LLL(\varphi)$.
\end{thm}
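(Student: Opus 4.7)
The plan is first to reduce the problem by leveraging the earlier theorems. By Theorem \ref{Proof_TheMaxMinForGoldenRatio}, $\rep(\x)=\mu_{max}$ exactly when $\x\in S_{1}^{c}$; by Theorem \ref{Proof_TheRightGap}, $\rep(\x)=\mu_{2}$ exactly when $\x\in S_{1}\cap S_{2}^{c}$, and $\rep(\x)=\mu_{3}$ precisely for $\x\in S_{2}\cap S_{3}^{c}$ with locating chain eventually $\overline{a^{2}b^{2}}$. Moreover the bounds derived inside the proof of Theorem \ref{Proof_TheRightGap} show that any $\x\in S_{3}$ satisfies $\rep(\x)\le\max\{1+\varphi^{2}+\varphi^{7},\, 1+\varphi/(1+2\varphi^{3})\}$, and one verifies via $\varphi^{2}+\varphi=1$ that both numbers are strictly less than $\mu_{4}$, so $S_{3}$ contributes nothing in $[\mu_{4},\mu_{3})$. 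Thus I reduce to analyzing $\x\in S_{2}\cap S_{3}^{c}$ whose locating chain is not eventually $\overline{a^{2}b^{2}}$.

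For any such $\x$, every chain has length $1$ or $2$, so after a finite prefix the locating chain decomposes uniquely into blocks drawn from $\{ba, bba, baa, bbaa\}$, where each block pairs one $b$-chain with the subsequent $a$-chain. I would then apply Lemma \ref{PositionToCompute} and Lemma \ref{ComputationOnRep} block-by-block, tracking how $W_{k+1}$ relates to $W_{k}$ via Lemma \ref{LocatingChainRuleForGoldenRatio}; the ratios $n_{i}/n_{i+1}$ at each relevant transition become rational functions of $\varphi$ determined by how many consecutive $bbaa$ blocks appear before the next block of a different type, together with the contribution of the finite prefix to $|W_{k}|$. A direct bookkeeping argument shows that if $bba$ or $baa$ occurs infinitely often, then the associated transition ratio is strictly smaller than the $\mu_{4}$-threshold, forcing $\rep(\x)<\mu_{4}$. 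This leaves exactly the words whose locating chain is eventually $(bbaa)^{e_{1}}ba(bbaa)^{e_{2}}ba\cdots$, which is condition $1)$; the constraint $e_{i}\ge 1$ then rules out the tail pattern $\overline{ba}$, which would give $\rep(\x)=\mu_{2}$.

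For such words I compute the two families of limiting ratios. Ratios arising \emph{inside} a run of $(bbaa)^{e_{i}}$ are precisely those governing $\mu_{3}$-type words and approach $\varphi^{2}(\varphi^{4}+\varphi^{2}+1)/(\varphi^{5}+\varphi^{3}+1)$ as $e_{i}\to\infty$; these are strictly above $\mu_{4}-1$. Ratios at the \emph{transition from} a $(bbaa)^{e_{i}}$-run to the subsequent $ba$-block tend to $(1-\varphi^{6})/(1+2\varphi-2\varphi^{7}+\varphi^{11})$ as $e_{i}\to\infty$, and are strictly less for finite $e_{i}$. The $\liminf$ is therefore attained along the transition sequence, equals $\mu_{4}-1$ precisely when $\limsup_{i}e_{i}=\infty$, and is strictly greater when $\limsup_{i}e_{i}<\infty$. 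This proves the characterization and shows $(\mu_{4},\mu_{3})$ is a maximal gap.

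For the limit-point assertion, I would construct, for each $E\ge 1$, a Sturmian word $\x_{E}$ whose locating chain is of the form $u\,\overline{(bbaa)^{E}ba}$; the computation above yields a closed-form value for $\rep(\x_{E})$ that converges to $\mu_{4}$ from below as $E\to\infty$, exhibiting $\mu_{4}$ as a limit point. The main obstacle will be the precise numerical identifications: verifying the limit of the transition-ratios matches the stated formula $\mu_{4}=1+(1-\varphi^{6})/(1+2\varphi-2\varphi^{7}+\varphi^{11})$, checking the strict inequalities $1+\varphi^{2}+\varphi^{7}<\mu_{4}$ and $1+\varphi/(1+2\varphi^{3})<\mu_{4}$, and establishing the block-wise strict bound forced by the presence of $bba$ or $baa$. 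All of these reduce to polynomial identities via $\varphi^{2}+\varphi=1$, but the combinatorial accounting of $|W_{k}|$ across the possible block arrangements is where care is required.
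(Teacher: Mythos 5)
Your overall route is the same as the paper's: reduce to $\x\in S_{2}\cap S_{3}^{c}$ via Theorems \ref{Proof_TheMaxMinForGoldenRatio} and \ref{Proof_TheRightGap}, note that every chain eventually has length at most $2$, compute with Lemmas \ref{PositionToCompute} and \ref{ComputationOnRep}, and obtain the limit point from the family $u\overline{(b^{2}a^{2})^{d}ba}$ whose exponent increases to $\mu_{4}$. The sufficiency of conditions 1)--2) and the limit-point construction in your plan match the paper. The genuine gap is in your central exclusion step: the claim that if a $bba$ or $baa$ block occurs infinitely often then ``the associated transition ratio'' is below $\mu_{4}-1$, to be verified by per-block bookkeeping. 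This is not a local fact about the block. For $baa$ (i.e.\ $l_{j}=1$, $m_{j+1}=2$), the estimate you can extract blockwise with the trivial bound $|W_{k}|\le q_{k}$ is the one already appearing in the proof of Theorem \ref{Proof_TheRightGap}, namely $1+\varphi^{2}+\varphi^{7}+\varphi^{8}\approx 1.4377$, which is \emph{larger} than $\mu_{4}\approx 1.4347$; a sharper, context-dependent estimate (what the paper compresses into ``$\rep(\x)<1.432$'') is needed, and it uses the structure of the neighbouring blocks, not just the block itself.

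Worse, for a $bba$ block followed by a run of $ba$ blocks (the pattern $m_{j}=m_{j+1}=1$), none of the ratios at or immediately after the block is below the threshold: there the type-(3) ratios are on the order of $\varphi/(1+\varphi^{3}+\varphi^{6}+\varphi^{7})\approx 0.466$ and the type-(2) ratios are even larger, all exceeding $\mu_{4}-1\approx 0.4347$. The sub-threshold ratio only materializes at the \emph{next} chain of length two (e.g.\ at the next $b^{2}\to a$ transition, which is where the paper's bounds $1+\varphi/(1+\varphi^{3}+\varphi^{4}+\varphi^{7}+\varphi^{8})$ and $1+\varphi/(1+\varphi^{3}+\varphi^{4}+\varphi^{7}+\varphi^{10})$ come from), and guaranteeing that such a chain occurs and controlling $|W_{k}|$ there is precisely the paper's non-local combinatorial step: the proof that $\{j:m_{j}=m_{j+1}=1\}$ is finite, which iterates the finiteness of the exceptional patterns (established under the standing assumption $\rep(\x)>\mu_{4}$) and ends in a contradiction with $\x\in S_{2}$ via the tail $\overline{ab}$. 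Without an argument of this kind, your block decomposition into $\{ba,bba,baa,bbaa\}$ does not by itself force condition 1), and both the ``only if'' half of the characterization of $\mu_{4}$ and the maximality of the gap $(\mu_{4},\mu_{3})$ remain unproved.
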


\begin{proof}
From Lemma \ref{PositionToCompute} and \ref{ComputationOnRep}, $\rep(\x)=\mu_{4}$ if $x\in S_{2}\cap S_{3}^{c}$ satisfies the above two conditions 1) and 2).
Assume that a Sturmian word $\x$ satisfies $\rep(\x)\in\left(\mu_{4},\mu_{3}\right)$.
In the proof of Theorem \ref{TheRightGap}, $\rep(\x)\ge\mu_{2}$ for $\x \in S_{2}^{c}$, and $\rep(\x)\le\min\{1+\varphi^2+\varphi^7, 1+\dfrac{\varphi}{2\varphi^3+1}\}<\mu_{4}$ for $\x\in S_{3}$.
Thus, $\x\in S_{2}\cap S_{3}^{c}$.
By definition, there exists an integer $I>0$ such that $m_{j}, l_j \le 2$ for all $j\ge I$.
Moreover, $\rep(\x)<\mu_{3}$ implies that $\{j:m_{j}=1\}\cup\{j:l_{j}=1\}$ is infinite. 
Hence, $\{j:m_{j}=1,l_{j}=2\}\cup\{j:l_{j}=1,m_{j+1}=2\}$ is infinite.

First, if $\{j:l_{j}=1,m_{j+1}=2\}$ is infinite, then $\rep(\x)<1.432<\mu_{4}$.
Thus, $\{j:l_{j}=1,m_{j+1}=2\}$ is finite.
In other words, there exists an integer $I'>0$ such that $m_{j}=2$ implies $l_{j-1}=2$ for all $j>I'$.
Since $\{j:m_{j}=1,l_{j}=2\}\cup\{j:l_{j}=1,m_{j+1}=2\}$ is infinite, $\{j:m_{j}=1,l_{j}=2\}$ is infinite.
Now, let us show that both $\{j:m_{j}=1,l_{j-1}=l_{j}=2\}$ and $\{j:m_{j-1}=m_{j}=1,l_{j-1}=1,l_{j}=2\}$ are finite.
If $\{j:m_{j}=1,l_{j-1}=l_{j}=2\}$ is infinite, then there exists an infinite sequence $\{k(j)\}$
such that $\x\in\C_{k(j)-5}^{\II}\cap\C_{k(j)-4}^{\II}\cap\C_{k(j)-3}^{\I}\cap\C_{k(j)-1}^{\II}\cap\C_{k(j)}^{\II}\cap\C_{k(j)+1}^{\I}$ for all $j$.
Hence,
\begin{align*}
    \rep(\x)
    &\le\liminf\limits_{j\to\infty}{\left(1+\frac{q_{k(j)+1}-1}{q_{k(j)+2}+|W_{k(j)+1}|-1}\right)} \\
    &=\liminf\limits_{j\to\infty}{\left(1+\frac{q_{k(j)+1}-1}{q_{k(j)+2}+q_{k(j)-1}+q_{k(j)-2}+q_{k(j)-5}+q_{k(j)-6}+|W_{k(j)-5}|-1}\right)} \\
    &\le\liminf\limits_{j\to\infty}{\left(1+\frac{q_{k(j)+1}-1}{q_{k(j)+2}+q_{k(j)-1}+q_{k(j)-2}+q_{k(j)-5}+q_{k(j)-6}-1}\right)} \\
    &=1+\frac{\varphi}{1+{\varphi^3}+{\varphi^4}+{\varphi^7}+{\varphi^8}}
    <\mu_{4}.
\end{align*}
It follows that $\{j:m_{j}=1,l_{j-1}=l_{j}=2\}$ is finite.

If $\{j:m_{j-1}=m_{j}=1,l_{j-1}=1,l_{j}=2\}$ is infinite, then there exists an infinite sequence $\{k(j)\}$
such that $\x\in\C_{k(j)-7}^{\II}\cap\C_{k(j)-6}^{\I}\cap\C_{k(j)-4}^{\II}\cap\C_{k(j)-3}^{\I}\cap\C_{k(j)-1}^{\II}\cap\C_{k(j)}^{\II}\cap\C_{k(j)+1}^{\I}$ for all $j$.
Hence,
\begin{align*}
    \rep(\x)
    &\le\liminf\limits_{j\to\infty}{\left(1+\frac{q_{k(j)+1}-1}{q_{k(j)+2}+|W_{k(j)+1}|-1}\right)} \\
    &=\liminf\limits_{j\to\infty}{\left(1+\frac{q_{k(j)+1}-1}{q_{k(j)+2}+q_{k(j)-1}+q_{k(j)-2}+q_{k(j)-5}+q_{k(j)-8}+|W_{k(j)-7}|-1}\right)} \\
    &\le\liminf\limits_{j\to\infty}{\left(1+\frac{q_{k(j)+1}-1}{q_{k(j)+2}+q_{k(j)-1}+q_{k(j)-2}+q_{k(j)-5}+q_{k(j)-8}-1}\right)} \\
    &=1+\frac{\varphi}{1+{\varphi^3}+{\varphi^4}+{\varphi^7}+{\varphi^{10}}}
    <\mu_{4}.
\end{align*}
It follows that $\{j:m_{j-1}=m_{j}=1,l_{j-1}=1,l_{j}=2\}$ is finite.
Therefore, both $\{j:m_{j}=1,l_{j-1}=l_{j}=2\}$ and $\{j:m_{j-1}=m_{j}=1,l_{j-1}=1,l_{j}=2\}$ are finite.

Next, let us prove that $\{j:m_j = m_{j+1}=1\}$ is finite. 
Suppose that $\{j:m_j = m_{j+1}=1\}$ is infinite. 
Then, $\{j:m_j = m_{j+1}=1, l_j=1\}\cup\{j:m_j = m_{j+1}=1, l_j=2\}$ is infinite.
Note that $\{j:l_{j}=1,m_{j+1}=2\}$ is finite.
In other words, there exists an integer $I''>0$ such that $l_{j}=1$ implies $m_{j+1}=1$ for all $j>I''$.
Thus, if $\{j:m_j = m_{j+1}=1, l_j=1\}$ is infinite, then $\{j:m_j = m_{j+1}=1, l_j=1, l_{j+1}=2\}\cup\{j:m_j = m_{j+1}=m_{j+2}=1, l_j=l_{j+1}=1\}$ is infinite. 
Since $\{j:m_{j}=m_{j+1}=1,l_{j}=1,l_{j+1}=2\}$ is finite, $\{j:m_j = m_{j+1}=m_{j+2}=1, l_j=l_{j+1}=1\}$ is infinite. 
Thus, $\{j:m_j = m_{j+1}=m_{j+2}=1, l_j=l_{j+1}=1, l_{j+2}=2\}\cup\{j:m_j = m_{j+1}=m_{j+2}=m_{j+3}=1, l_j=l_{j+1}=l_{j+2}=1\}$ is infinite.
Since $\{j:m_{j}=m_{j+1}=1,l_{j}=1,l_{j+1}=2\}$ is finite, $\{j:m_j = m_{j+1}=m_{j+2}=m_{j+3}=1, l_j=l_{j+1}=l_{j+2}=1\}$ is infinite.
By the same argument, it follows that the locating chain of $\x$ is $u\overline{ab}$ for some finite word $u$, which leads a contradiction with $x\in S_2$. 
Hence, $\{j:m_j = m_{j+1}=1, l_j=1\}$ is finite. 
Similarly, we use the same argument to induce that $\{j:m_j = m_{j+1}=1, l_j=2\}$ is finite. 
Therefore, $\{j:m_j = m_{j+1}=1\}$ is finite.

From the above arguments, we have the locating chain of $\x$ is $u(b^2 a^2)^{e_1}b^{f_1} a(b^2 a^2)^{e_2}b^{f_2} a\dots$ for integers $e_i \ge 1$, $f_j = 1\textrm{ or }2$, and some finite word $u$. 
Since $\{j:m_{j}=1,l_{j-1}=l_{j}=2\}$ is finite, we can assume that $f_j=1$ for all $j$. The locating chain of $\x$ is $u(b^2 a^2)^{e_1}ba(b^2 a^2)^{e_2}ba\dots$.
Moreover, if 
$d=\limsup\limits_{i\ge1}\{e_i\}<\infty,$
then 
\[  \rep(\x)
    \le 1+{\left(\frac{1}{\varphi}+\varphi+\frac{\varphi^6}{1-\varphi^{6d+3}}+\frac{\varphi^{10}}{1-\varphi^6}\frac{1-\varphi^{6d}}{1-\varphi^{6d+3}}\right)^{-1}}\\
    <\mu_{4}.
\]
It follows $\limsup\limits_{i\ge1}\{e_i\}=\infty.$
However, for a Sturmian word $\x$ such that the locating chain of $\x$ is $u(b^2 a^2)^{e_1}ba(b^2 a^2)^{e_2}ba\dots$ and $\limsup\limits_{i\ge1}\{e_i\}=\infty$, $\rep(\x)=\mu_{4}$. 
It implies that there does not exist a Sturmian word $\x$ satisfying $\rep(\x)\in\left(\mu_{4},\mu_{3}\right)$. 
Hence, 
$\left(\mu_{4}, \mu_{3}\right)$
is a maximal gap in $\LLL(\varphi)$.
Furthermore,
\begin{align*}
    \rep(\dots\overline{(b^2a^2)^d ba})
    &= 1+{\left(\frac{1}{\varphi}+\varphi+\frac{\varphi^6}{1-\varphi^{6d+3}}+\frac{\varphi^{10}}{1-\varphi^6}\frac{1-\varphi^{6d}}{1-\varphi^{6d+3}}\right)^{-1}}\\
    &\to \mu_{4}\textrm{ as }d\to\infty.
\end{align*}
Hence, $\mu_{4}$ is a limit point of $\LLL(\theta)$.
\end{proof}


\begin{prop}\label{proof_Cardinality_Of_Sturmian_words}
For $\alpha\in\{\mu_{max},\mu_{2},\mu_{3}\}$, there are only countably many Sturmian words $\x$ of slope $\varphi$ satisfying $\rep(\x) = \alpha.$
For $\beta\in\{\mu_{4}, \mu_{min}\}$, there are uncountably many Sturmian words $\x$ of slope $\varphi$ satisfying $\rep(\x) = \beta.$
\end{prop}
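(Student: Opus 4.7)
The plan is to reduce each cardinality question to a combinatorial count on locating chains, using the characterizations of Sturmian words with $\rep(\x)=\alpha$ established in Theorems~\ref{TheMaxMinForGoldenRatio}, \ref{TheRightGap}, and \ref{TheThirdRightGap}. A preliminary observation for slope $\varphi$ is that $a_1=1$ forces $|M_0|=|M_1|=1$, so $W_1$ is uniquely determined by the case $\x$ belongs to at level $1$; together with the recursive rules of Lemma~\ref{LocatingChainRuleForGoldenRatio}, this forces the entire sequence $(W_k)_{k\ge 1}$ to be determined by the locating chain. Since $|W_k M_k|\to\infty$, the Sturmian word $\x$ is recovered from $(W_k)_{k\ge 1}$, so the map from Sturmian words of slope $\varphi$ to their locating chains is injective.

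For the countable cases $\alpha\in\{\mu_{max},\mu_2,\mu_3\}$, the characterizations specify that the locating chain is eventually periodic with a fixed periodic part ($\overline{a}$ or $\overline{b}$ for $\mu_{max}$, $\overline{ab}$ for $\mu_2$, $\overline{b^2a^2}$ for $\mu_3$). Such chains are parametrized by their finite non-periodic prefix together with a choice among finitely many periodic parts, so the set of such chains is countable, and by the injectivity above the corresponding set of Sturmian words is countable as well.

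For $\mu_4$ and $\mu_{min}$ the plan is to inject $\{0,1\}^{\mathbb{N}}$ into the set of Sturmian words of slope $\varphi$ with the prescribed exponent. For $\mu_4$, given $(\epsilon_i)\in\{0,1\}^{\mathbb{N}}$, set $e_i:=i+\epsilon_i$ and let $\x_{\epsilon}$ be the Sturmian word whose locating chain is $(b^2a^2)^{e_1}ba(b^2a^2)^{e_2}ba\dots$; since $\limsup_{i\to\infty} e_i=\infty$, Theorem~\ref{TheThirdRightGap} gives $\rep(\x_{\epsilon})=\mu_4$, while distinct $(\epsilon_i)$ yield distinct locating chains, hence by injectivity distinct Sturmian words. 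For $\mu_{min}$, take locating chains of the form $ab^{k_1}ab^{k_2}ab^{k_3}\dots$ with $k_i:=i+\epsilon_i$; then the $b$-chain lengths tend to infinity, so $\x\in S_d$ for every $d\ge 1$, giving $\rep(\x)=\mu_{min}$ by Theorem~\ref{TheMaxMinForGoldenRatio}.

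The main obstacle is the existence direction for these two constructions: one needs each prescribed locating chain to actually be realized by some Sturmian word of slope $\varphi$. This is handled by constructing $\x$ directly as the common extension of the growing prefixes determined by the case at each level, with $W_k$ defined inductively from the locating chain via Lemma~\ref{LocatingChainRuleForGoldenRatio}; the resulting infinite word is Sturmian of slope $\varphi$ precisely because its prefixes match exactly those forced by the chosen case at every level.
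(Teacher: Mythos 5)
Your proposal is correct and follows essentially the same route as the paper: reduce each cardinality question to counting locating chains via the characterizations in Theorems \ref{TheMaxMinForGoldenRatio}, \ref{TheRightGap}, and \ref{TheThirdRightGap}, with the countable cases parametrized by a finite prefix and the uncountable cases by an explicit injection of $\{0,1\}^{\mathbb N}$ into admissible chains. The only difference is that you make explicit two points the paper's proof takes for granted --- that a Sturmian word of slope $\varphi$ is determined by its locating chain (via the uniqueness of $W_1$ and Lemma \ref{LocatingChainRuleForGoldenRatio}), and that every admissible chain is realized by an actual Sturmian word --- which is added rigor rather than a different argument.
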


\begin{proof}
From Theorem \ref{TheMaxMinForGoldenRatio}, $\rep(\x)=\mu_{max}$ if and only if the locating chain of $\x$ is $u\overline{a}$ or $v\overline{b}$ for some finite words $u,v.$
Thus, $\x$ satisfying $\rep(\x)=\mu_{max}$ is completely determined by the choice of $u$ or $v$.
Hence, there exist only countably many Sturmian words $\x$ of slope $\varphi$ with $\rep(\x)=\mu_{max}.$ 
Theorem \ref{TheMaxMinForGoldenRatio} also implies that 
$\rep(\x)=\mu_{min}$ if and only if $\x\in S_{d}$ for any $d\ge1$. 
Hence, it is possible to choose $2^{\aleph_{0}}$ sequences $\{m_i\}\cup\{l_j\}$. 
Namely, there exist uncountably many Sturmian words $\x$ of slope $\varphi$ with $\rep(\x)=\mu_{min}.$

On the other hand, Theorem \ref{TheRightGap} implies that $\rep(\x)=\mu_{2}$ if and only if the locating chain of $\x$ is $u\overline{ba}$ for some finite word $u.$
Thus, $\x$ satisfying $\rep(\x)=\mu_{2}$ is completely determined by the choice of $u$.
Hence, there exist only countably many Sturmian words $\x$ of slope $\varphi$ with $\rep(\x)=\mu_{2}.$ 
Theorem \ref{TheRightGap} also implies that $\rep(\x)=\mu_{3}$
if and only if the locating chain of $\x$ is $u\overline{b^2a^2}$ for some finite word $u.$
Thus, $\x$ satisfying $\rep(\x)=\mu_{3}$ is completely determined by the choice of $u$.
Hence, there exist only countably many Sturmian words $\x$ of slope $\varphi$ with $\rep(\x)=\mu_{3}.$

Finally, Theorem \ref{TheThirdRightGap} implies that 
$\rep(\x)=\mu_{4}$
if and only if the locating chain of $\x$ is 
$u(b^2 a^2)^{e_1}ba(b^2 a^2)^{e_2}ba\dots$
for some finite word $u$ and integers $e_i \ge 1$ satisfying $\limsup\limits_{i\ge1}\{e_{i}\}=\infty$.
Hence, it is possible to choose $2^{\aleph_{0}}$ sequences $\{e_i\}$. 
Namely, there exist uncountably many Sturmian words $\x$ of slope $\varphi$ with $\rep(\x)=\mu_{4}.$
\end{proof}

\section*{acknowledgement}
    The author wishes to thank Dong Han Kim and Seonhee Lim for introducing me to the subject and very helpful comments.

\begin{bibdiv}
\begin{biblist}

\bib{AB11}{article}{
    author={Adamczewski, Boris},
    author={Bugeaud, Yann},
    title={Nombres réels de complexité sous-linéaire: mesures d'irrationalité et de transcendance},
    journal={Journal f\"ur die reine und angewandte Mathematik},
    volume={658},
    date={2011},
    pages={65--98},
}

\bib{AR09}{article}{
    author={Adamczewski, Boris},
    author={Rivoal, Tanguy},
    title={Irrationality measures for some automatic real numbers},
    journal={Mathematical Proceedings of The Cambridge Philosophical Society},
    volume={147},
    date={2009},
    number={3},
    pages={659--678}
}

\bib{AK21}{article}{
    author={Akiyama, Shigeki},
    author={Kaneko, Hajime},
    title={Multiplicative analogue of Markoff-Lagrange spectrum and Pisot numbers},
    journal={Advances in Mathematics},
    volume={380},
    date={2021},
    number={107547}
}

\bib{AB98}{article}{
    author={Alessandri, Pascal},
    author={Berth{\'e}, Val{\'e}rie},
    title={Three distance theorems and combinatorics on words},
    journal={Enseignement Math{\'e}matique},
    volume={44},
    Publisher={Citeseer},
    date={1998},
    pages={103--132}
}

\bib{BV00}{article}{
   author={Berth{\'e}, Val\'erie},
   author={Vuillon, Laurent},
   title={A two-dimensional generalization of Sturmian sequences: tilings and rotations},
   volume={223},
   publisher={Discrete Math},
   date={2000},
   number={1-3},
   pages={27--53}
}

\bib{B27}{article}{
    author={B\"ohmer, P.E.},
    title={\"Uber die Transzendenz gewisser dyadischer Br\"uche},
    journal={Math. Ann.},
    volume={96},
    date={1927},
    pages={367--377}
}

\bib{B12}{book}{
   author={Bugeaud, Yann},
   title={Distribution modulo one and Diophantine approximation},
   series={Cambridge Tracts in Mathematics},
   volume={193},
   publisher={Cambridge University Press},
   date={2012},
   pages={300},
   isbn={978-0-521-11169-0},
   doi={10.1017/CBO9781139017732}
}


\bib{BK19}{article}{
  author={Bugeaud, Yann},
  author={Kim, Dong Han},
  title={A new complexity function, repetitions in Sturmian words, and irrationality exponents of Sturmian numbers},
  journal={Transactions of the American Mathematical Society},
  volume={371},
  date={2019},
  number={5},
  pages={3281--3308}
}

\bib{BKLN}{article}{
    author={Bugeaud, Yann},
    author={Kim, Dong Han},
    author={Laurent, Michel},
    author={Nogueira, Arnaldo},
    title={On the Diophantine nature of the elements of Cantor sets arising in the dynamics of contracted rotations},
    journal={Annali della Scuola Normale Superiore di Pisa, Classe di Scienze},
    note={available at \href{https://arxiv.org/abs/2001.00380}{arxiv.org/abs/2001.00380}}
}

\bib{BL}{article}{
    author={Bugeaud, Yann},
    author={Laurent, Michel},
    title={Combinatorial structure of Sturmian words and continued fraction expansions of Sturmian numbers},
    note={available at \href{https://arxiv.org/abs/2104.09239}{arxiv.org/abs/2104.09239}}
}

\bib{C97}{article}{
  author={Cassaigne, Julien},
  title={Sequences with grouped factors},
  journal={Developments in language theory},
  volume={3},
  date={1997},
  pages={211--222}
}

\bib{CF89}{book}{
   author={Cusick, Thomas W.},
   author={Flahive, Mary E.},
   title={The Markoff and Lagrange spectra},
   series={Mathematical Surveys and Monographs},
   volume={30},
   publisher={American Mathematical Society, Providence, RI},
   date={1989},
   pages={x+97},
}

\bib{F12}{article}{
  author={Ferenczi, S{\'e}bastien},
  title={Dynamical generalizations of the Lagrange spectrum},
  journal={Journal d’Analyse Math{\'e}matique},
  volume={118},
  date={2012},
  number={1},
  pages={19--53},
  publisher={Springer}
}

\bib{K96}{article}{
  author={Kenyon, Richard},
  title={The construction of self-similar tilings},
  journal={Geometric $\&$ Functional Analysis},
  volume={6},
  date={1996},
  number={3},
  pages={471--488},
  publisher={Springer}
}

\bib{L02}{book}{
   author={Lothaire, Monsieur},
   title={Algebraic combinatorics on words},
   volume={90},
   publisher={Cambridge university press},
   date={2002}
}

\bib{MH40}{article}{
  author={Morse, Marston},
  author={Hedlund, Gustav A.},
  title={Symbolic dynamics \textrm{II}. Sturmian trajectories},
  journal={American Journal of Mathematics},
  volume={62},
  date={1940},
  number={1},
  pages={1--42}
}

\bib{OW}{article}{
    author={Ohnaka, Suzue},
    author={Watanabe, Takao},
    title={A gap of the exponents of repetitions of Sturmian words},
    note={available at \href{https://arxiv.org/abs/2003.10721}{arxiv.org/abs/2003.10721}}
}

\end{biblist}
\end{bibdiv}

\end{document}